\newtheorem{thm}{Theorem}[section]
\newtheorem{prop}[thm]{Proposition}
\newtheorem{lem}[thm]{Lemma}
\newtheorem{defn}[thm]{Definition}
\numberwithin{equation}{section}   
\newcommand{\rdbb}{\mathbb{R}^d}
\newcommand{\lra}{\longrightarrow}
\newcommand{\rbb}{\mathbb{R}}
\newcommand{\zbb}{\mathbb{Z}}
\newcommand{\embthm}{\overline{\mathrm{Emb}}_c(\coprod_{i=1}^r \mathbb{R}^{m_i}, \rdbb)}
\newcommand{\emb}{\overline{\mbox{Emb}}_c(\coprod_{i=1}^r \mathbb{R}^{m_i}, \rdbb)}
\newcommand{\rmodthm}{\underset{\Omega}{\mathrm{hRmod}}}
\newcommand{\rmod}{\underset{\Omega}{\mbox{hRmod}}}
\newcommand{\qbb}{\mathbb{Q}}
\newcommand{\codim}{d > 2 \mathrm{max}\{m_i| \ 1 \leq i \leq r\} + 1}
\newcommand{\zotimes}{Z_{V^{\otimes  \bullet}}}
\newcommand{\lie}{{{\mathcal L}ie}((\bullet))}
\newcommand{\empi}{\mathcal{E}_{\pi}^{m_1, \cdots, m_r, d}}
\newcommand{\Det}{\mathrm{Det}}
\newcommand{\MODL}{\mathbf{{Mod}}({\mathcal L}_\infty)}
\newcommand{\MODLdet}{\mathbf{{Mod}}_{\Det}({\mathcal L}_\infty)}
\newcommand{\FCOM}{\mathrm{F}{\mathcal C}om}
\newcommand{\FCOMdet}{\mathrm{F}_\Det{\mathcal C}om}
\newcommand{\Ch}{{\mathbb C}\mathrm{h}}
\newcommand{\nbb}{\mathbb{N}}
\newcommand{\cbb}{\mathbb{C}}
\newcommand{\xvecs}{\vec{x}^{\vec{s}}}
\title{\textbf{Euler characteristics for 
spaces of string links
and the modular envelope of ${\mathcal L}_\infty$}}
\date{}
\author{Paul Arnaud Songhafouo Tsopm\'en\'e \\
 Victor Turchin \footnote{The second author is partially supported by the Simons Foundation \lq\lq{}Collaboration grant for mathematicians,\rq\rq{} award ID:~519474.} \\
}
\begin{document}
\maketitle

\begin{abstract}
We make calculations in graph homology which further understanding of the topology of spaces of string links, in particular calculating the Euler 
characteristics of finite-dimensional summands in their homology and homotopy. In doing so, we also determine the supercharacter of the symmetric group action on the positive arity components of the modular envelope of ${\mathcal L}_\infty$. 
\end{abstract}


\setcounter{section}{-1}

\section{Introduction}\label{s:intro}

\sloppy

Let $\mbox{Emb}_c(\coprod_{i=1}^r \mathbb{R}^{m_i}, \rdbb)$ be the space of smooth embeddings $f \colon \coprod_{i=1}^r \rbb^{m_i} \hookrightarrow \rdbb$ that coincide outside a compact set with a fixed affine embedding $\iota$.  
Such embeddings are called \textit{string links}. Let  $\mbox{Imm}_c(\coprod_{i=1}^r \mathbb{R}^{m_i}, \rdbb)$ be the space of smooth immersions 
with the same behavior at infinity. 
In this paper and in~\cite{songhaf_tur16} we study the homotopy fiber over $\iota$ of the obvious inclusion
$\mbox{Emb}_c(\coprod_{i=1}^r \mathbb{R}^{m_i}, \rdbb) \subset \mbox{Imm}_c(\coprod_{i=1}^r \mathbb{R}^{m_i}, \rdbb)$, which we denote $\emb$.
In~\cite{songhaf_tur16}
we built on Goodwillie-Weiss calculus  
\cite{good_weiss99} to define two types of complexes computing the rational homology and homotopy groups of $\emb$, in the case when $\codim$, 
which split into a direct sum  of  finite complexes. 
 In Theorem~\ref{gen_function_thm} and Theorem~\ref{gen_function_htpy_thm} below we compute the generating functions for the Euler characteristic of these summands, giving to our knowledge the first known \lq\lq{}lower-bounds\rq\rq{} for the homology and homotopy of these spaces.

The first type of complexes we study, which we call {\it Koszul complexes}, are built up from the (co)homology of configuration
spaces of points in $\rbb^{m_i}$ and in $\rdbb$. These complexes are more amenable to computation. 
The second type of complexes, which we call {\it hairy graph-complexes}, are similar to
 Kontsevich\rq{}s commutative operad graph-complexes~\cite{kont93} except that our graphs
 are allowed to have external univalent vertices, called {\it hairs}, and thus are more similar to graph-complexes
 studied  by Conant, Kassabov and Vogtmann \cite{con_kas_vogt13}. We show that these complexes are built up 
from the positive arity  components of the modular envelope  $\MODL$ of the ${\mathcal L}_\infty$ operad in case of odd $d$,  
and from their twisted version $\MODLdet$ for even $d$. 
As a consequence we  determine in Theorem~\ref{t:Z_L_infty} the \textit{supercharacter} 
 of the symmetric group action on the positive arity components of these modular envelopes, which up to
 regrading are the Feynman transforms $\FCOMdet$ and $\FCOM$, respectively, defined by Getzler and Kapranov in~\cite{getzler_kapranov98}.
 
 
 Spaces of embeddings are central objects of study in differential topology, with for example their components corresponding to isotopy classes and 
 their higher homotopy groups useful for generating interesting phenomena~\cite{bud08}. 
The main applications of modular operads are in
mathematical physics, in particular in Chern-Simons theory, open and closed string field theories, Batalin-Vilkovisky formalism,  
and formal geometry~\cite{barannikov13,chuang13,hamilton09,kauffmann17,kont93,markl01,markl16}. 
Graph-complexes and modular operads  also have applications in topology and geometry, in 
particular in the study of  the moduli spaces of curves with marked points, automorphisms of free groups, and Vassiliev invariants of knots and string links~\cite{con_kas_vogt13,getzler_kapranov98,kondo17,kont93,smile85, hin_vaintrob02}.
 In fact  $\mathbf{{Mod}}({\mathcal L}ie)=H_0\MODL$, the modular envelope of ${\mathcal L}ie$, is
 related to  Bar-Natan\rq{}s space of unitravalent graphs modulo AS and IHX relations from Vassiliev theory
 in exactly the same
 way as the modular envelope of ${\mathcal L}_\infty$ is related to our hairy graph-complexes.

 Direct computations  of the supercharacter of the symmetric group
 action on $\MODL$ and $\MODLdet$ using graph complexes have previously been much less tractable than we find here.
 Such were first attempted in case of arity zero, where 
 these correspond to 
 the well known Kontsevich graph-complexes associated to the commutative operad~\cite{kont93}, by T.~Willwacher and M.~\v{Z}ivkovi\'c in~\cite{wil_zh}.
 The original paper of Getzler and Kapranov~\cite{getzler_kapranov98}, where the notion of a modular operad is introduced, gives
 a universal method to compute the supercharacter of a Feynman transform of any modular operad, but explicit calculations were done 
 in~\cite{getzler_kapranov98}  only for the associative operad.
 Thus our work deepens  understanding of modular operads and graph-complexes as well as embedding spaces.

 \subsection{Basic definitions and previous results}\label{definitions}
We recall the main results of \cite{songhaf_tur16}.  
To define complexes   arising from Goodwilliie-Weiss  calculus and computing the rational homology and homotopy groups of $\emb$, 
we first define two categories: ${\mbox{Rmod}_\Omega}$ and  ${\mbox{Rmod}_\Gamma}$. Let $\Omega$ denote the category of finite unpointed sets $\underline{n}=\{1\ldots n\}$, $n\geq 0$, and surjections. Define a \textit{right $\Omega$-module} as a contravariant functor from $\Omega$ to any given category. The category of right $\Omega$-modules in chain complexes over the rationals is denoted ${\mbox{Rmod}_\Omega}$. This category can be endowed with several model structures. We choose the one called \textit{projective model structure}. In that model, weak equivalences are quasi-isomorphisms, fibrations are degreewise surjective maps~\cite{hovey}. Given two objects $P$ and $Q$ in ${\mbox{Rmod}_\Omega}$, we write ${\mbox{Rmod}_\Omega}(P,Q)$ for the space (chain complex) of morphisms between them, and we write $h{\mbox{Rmod}_\Omega}(P,Q)$ for the derived mapping space. For specific computations we will need to apply this construction only to $\Omega$-modules with zero differential, in which case $\rmod(-,-)$  can be expressed as a product of $\mathrm{Ext}$ groups.
%

 For a pointed topological space $X$, define the functor $X^{\wedge \bullet} \colon \Omega \lra \mbox{Top}$ from $\Omega$  to topological spaces by $X^{\wedge \bullet}(\underline{n}) = X^{\wedge n}$. 
Here  \lq\lq$\wedge n$\rq\rq{}  is the $n$-fold smash product operation, with $X^{\wedge 0} = S^0$  the two-point  space. For a morphism $f$ in $\Omega$, $X^{\wedge \bullet}(f)$ is induced by the diagonal maps. 
Let $\widetilde{C}_*(-)$ denote the reduced singular chain complex functor, so the functors $\widetilde{C}_*(X^{\wedge \bullet})$ and $\widetilde{H}_*(X^{\wedge \bullet})$ (with zero differential) are objects of ${\mbox{Rmod}_\Omega}$. 
Note that in case $X$ is a suspension, any \textit{strict surjection} (that is, a surjection which is not a bijection) acts on $\widetilde{H}_*(X^{\wedge \bullet})$
as a zero map.

Let $\Gamma$ be the category whose objects are finite pointed sets $n_+ = \{0, 1, \cdots, n\}$, with $0$ as the basepoint, and whose morphisms are pointed maps.\footnote{We follow Pirashvili\rq{}s notation for 
$\Omega$ and $\Gamma$, see~\cite{pira00,pirash00}. In the literature following Segal~\cite{segal_74}, one often denotes by $\Gamma$ the
opposite category. 
We choose Pirashvili\rq{}s notation as in the sequel we use the higher order Hochschild-Pirashvili homology
 defined in~\cite{pirash00} by means of these categories $\Omega$ and $\Gamma$.}   
The category of contravariant functors from $\Gamma$ to chain complexes is denoted ${\mbox{Rmod}_\Gamma}$. Objects of that category are called \textit{right $\Gamma$-modules}. As an example of a right $\Gamma$-module, we have the homology $H_*(C (\bullet, \rdbb), \qbb)$, $d\geq 2$, where $C(k, \rdbb)$ denotes the configuration space of $k$ labeled points in $\rdbb$. One can see that $H_*(C (\bullet, \rdbb), \qbb)$ is indeed a right $\Gamma$-module as follows. 
 First, since there is a morphism $\mbox{Com} \lra H_*(C(\bullet, \rdbb), \qbb)$ of operads from the commutative operad $\mbox{Com} = H_0(C(\bullet, \rdbb), \qbb)$ to the homology $H_*(C(\bullet, \rdbb), \qbb) =  H_*(B_d(\bullet), \qbb)$ of the little $d$-disks operad, it follows that $H_*(C(\bullet, \rdbb), \qbb)$ is an \textit{infinitesimal bimodule} (see \cite[Definition 3.8]{aro_tur12} or \cite[Definition 4.1]{turchin10}) over $\mbox{Com}$. Secondly,  the category of infinitesimal bimodules over $\mbox{Com}$ is equivalent to the category of right $\Gamma$-modules (\cite[Corollary~4.10]{aro_tur12} or \cite[Lemma~4.3]{turchin10}). One can also show that the sequence  $\qbb \otimes \pi_*C(\bullet,\rdbb)$, $d\geq 3$,  has a natural structure of a right $\Gamma$-module. 

The two categories ${\mbox{Rmod}_\Omega}$ and  ${\mbox{Rmod}_\Gamma}$ we just defined are equivalent. To prove it, Pirashvili \cite{pira00} constructed a functor $\mbox{cr} \colon {\mbox{Rmod}_\Gamma} \lra {\mbox{Rmod}_\Omega}$, called \textit{cross effect}, and showed that it is actually an equivalence of categories.  Let 
$\widehat{H}_*(C(\bullet, \rdbb), \qbb)$  (or $\mbox{respectively } \qbb \otimes \widehat{\pi}_*C(\bullet, \rdbb)$) 
denote the cross effect of $H_*(C(\bullet, \rdbb), \qbb)$ (respectively of $\qbb \otimes \pi_*C(\bullet, \rdbb)$).

Theorems~0.1 and~0.2 in \cite{songhaf_tur16} express the rational homology and homotopy
groups of $\mathcal{L}:=\emb$  
 as the homology groups of derived mapping complexes of right $\Omega$-modules.  More precisely, for $d > 2 \mathrm{max}\{m_i| \ 1 \leq i \leq r\} + 1$, there are isomorphisms:\footnote{In other words,
 these formulae express the rational homology and homotopy of $\emb$ as the higher order
 Hochschild homology over the space $\vee_{i=1}^r S^{m_i}$ with coefficients in the $\Gamma$-modules
 $H_*(C(\bullet, \rdbb), \qbb)$ and $\qbb \otimes \pi_*C(\bullet, \rdbb)$, respectively, see~\cite{songhaf_tur16}.}


\begin{equation} \label{rational_homology_iso}
H_*(\mathcal{L}, \qbb) \cong H\left(\rmodthm\left(\widetilde{H}_*((\vee_{i=1}^r S^{m_i})^{\wedge \bullet}, \qbb), \widehat{H}_*(C(\bullet, \rdbb), \qbb)\right)\right).
\end{equation}
\begin{equation} \label{rational_homotopy_iso}
\qbb \otimes \pi_*\mathcal{L} \cong H\left(\rmodthm\left(\widetilde{H}_*((\vee_{i=1}^r S^{m_i})^{\wedge \bullet}, \qbb), \qbb \otimes \widehat{\pi}_*C(\bullet, \rdbb)\right)\right).
\end{equation}

\sloppy


For our purposes we need to split the right-hand sides of (\ref{rational_homology_iso}) and (\ref{rational_homotopy_iso}). In the sequel a sequence of $r$ integers $s_1, \cdots, s_r$ will be written as $\vec{s}$.  Also we will write  $|\vec{s}|$ for $s_1 + \cdots +s_r$, and $\Sigma_{\vec{s}}$ for $\Sigma_{s_1} \times  \cdots \times \Sigma_{s_r}$. If $x_1, \cdots, x_r$ is another sequence, we will write $\vec{s} \cdot \vec{x}$ for $s_1x_1 + \cdots + s_rx_r$, and $\vec{x}^{\vec{s}}$ for $\prod_{i} x_i^{s_i}$. 
We will also write $\vec{s}\geq 0$ if $s_i\geq 0$, $i=1\ldots r$.

Since $\vee_{i=1}^r S^{m_i}$ is a suspension, any strict surjection acts on  $\widetilde{H}_*((\vee_{i=1}^r S^{m_i})^{\wedge \bullet}, \qbb)$ as zero. This implies that the $\Omega$-module  $\widetilde{H}_*((\vee_{i=1}^r S^{m_i})^{\wedge \bullet}, \qbb)$ splits as follows:
\begin{equation} \label{splitting_htilde}
\widetilde{H}_*((\vee_{i=1}^r S^{m_i})^{\wedge \bullet}, \qbb) \cong \underset{\vec{s} \geq 0}{\bigoplus} Q^{\vec{m}}_{\vec{s}}, 
\end{equation}
where $Q^{\vec{m}}_{\vec{s}}$ is the right $\Omega$-module defined by 
  \begin{equation} \label{qs1sr}
Q_{\vec{s}}^{\vec{m}}(k) = \left\{ \begin{array}{lll}
                                 0 & \mbox{if} & k \neq |\vec{s}|; \\
																\mbox{Ind}^{\Sigma_k}_{\Sigma_{\vec{s}}} \widetilde{H}_*(S^{\vec{s} \cdot \vec{m}} ; \mathbb{Q}) & \mbox{if} & k = |\vec{s}|.
                                \end{array} \right.
   \end{equation}
		
Consider now $\widehat{H}_*(C(\bullet, \rdbb), \qbb)$ and $\qbb \otimes \widehat{\pi}_*C(\bullet, \rdbb)$ that appear in (\ref{rational_homology_iso}) and (\ref{rational_homotopy_iso}) respectively. One has the splittings of $\Omega$-modules:
\begin{equation} \label{splitting_hhat_H}
\widehat{H}_*(C(\bullet, \rdbb), \qbb) = \underset{t \geq 0}{\prod} \widehat{H}_{t(d-1)}(C(\bullet, \rdbb), \qbb),
\end{equation}
and
\begin{equation} \label{splitting_hhat_Pi}
\qbb \otimes \widehat{\pi}_*C(\bullet, \rdbb) = \underset{t \geq 0}{\prod} \qbb \otimes \widehat{\pi}_{t(d-2)+1}C(\bullet, \rdbb). 
\end{equation}
Combining (\ref{rational_homology_iso}), (\ref{rational_homotopy_iso}), (\ref{splitting_htilde}), (\ref{splitting_hhat_H}), and (\ref{splitting_hhat_Pi}), we get the following splittings 
\begin{multline} \label{final_splitting_H}
H_*(\emb, \qbb) \cong \underset{\vec{s}, t}{\prod} \rmod \left(Q^{\vec{m}}_{\vec{s}},  \widehat{H}_{t(d-1)}(C(\bullet, \rdbb), \qbb) \right)\\
\cong \underset{\vec{s}, t}{\bigoplus} \rmod \left(Q^{\vec{m}}_{\vec{s}},  \widehat{H}_{t(d-1)}(C(\bullet, \rdbb), \qbb) \right).
\end{multline}
\begin{multline} \label{final_splitting_pi}
\qbb \otimes \pi_*(\emb) \cong \underset{\vec{s}, t}{\prod} \rmod \left(Q^{\vec{m}}_{\vec{s}},  \qbb \otimes \widehat{\pi}_{t(d-2)+1}C(\bullet, \rdbb) \right)\\
\cong \underset{\vec{s}, t}{\bigoplus} \rmod \left(Q^{\vec{m}}_{\vec{s}},  \qbb \otimes \widehat{\pi}_{t(d-2)+1}C(\bullet, \rdbb) \right).
\end{multline}
The product is replaced by the direct sum because only finitely many factors contribute for any given degree, as 
$d > 2 \mathrm{max}\{m_i| \ 1 \leq i \leq r\} + 1$, using the graph-complexes we explicitly described in \cite[Remark 2.4]{songhaf_tur16}. (For (\ref{final_splitting_pi}) this is true even for a weaker constraint $d > \mathrm{max}\{m_i| \ 1 \leq i \leq r\} + 2$. Moreover, we conjecture in~\cite[Section~3]{songhaf_tur16} that~\eqref{final_splitting_pi} 
holds always in that  range for $*\geq 0$.)

\subsection{Statements of main results}
We can now state the main results of this paper.  For $\vec{s}\geq 0$ and $t \geq 0$, let $\mathcal{X}_{\vec{s}, t}$ be the Euler characteristic of the  summand of (\ref{final_splitting_H}) indexed by 
$\vec{s}, t$. The associated generating function is 
$F^H_{\vec{m}, d}(x_1, \cdots, x_r, u) = \underset{\vec{s}, t \geq 0}{\sum} \mathcal{X}_{\vec{s}, t} \cdot u^t \vec{x}^{\vec{s}}$.

Let $\Gamma(-)$ denote the gamma function, and let $\mu(-)$ denote the standard M\"obius function. Given a variable $x$ and an integer $l \geq 1$, let $E_l(x)$ denote the sum 
\begin{equation} \label{eq:E_l}
E_l(x) = \frac{1}{l} \sum_{p|l} \mu(p)x^{\frac{l}{p}}. 
\end{equation}
The following result computes the generating function above.

\begin{thm} \label{gen_function_thm}
Assume that $d > 2 \mathrm{max}\{m_i| \ 1 \leq i \leq r\} + 1$. The generating function $F^H_{\vec{m}, d}(x_1, \cdots, x_r, u)$  is given by the formula
\begin{equation} \label{gen_function_formula}
F^H_{\vec{m}, d}(x_1, \cdots, x_r, u) = \prod_{l = 1}^{+\infty} \frac{\Gamma((-1)^{d-1}E_l(\frac{1}{u}) - \sum_{i=1}^r (-1)^{m_i-1}E_l(x_i))}{((-1)^{d-1}lu^l)^{\sum_{i=1}^r (-1)^{m_i-1}E_l(x_i)} \Gamma((-1)^{d-1} E_l(\frac{1}{u}))}, 
\end{equation}
where each factor is understood as the asymptotic expansion of the underlying function when $u$ is complex and $(-1)^{d-1} u^l \lra +0$ and $x_1, \cdots, x_r$ are considered as fixed parameters. 
\end{thm}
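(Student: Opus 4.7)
The plan is to reduce Theorem~\ref{gen_function_thm} to a P\'olya-style graph enumeration via the hairy graph complexes of~\cite[Remark~2.4]{songhaf_tur16}, which explicitly compute each summand $\rmod(Q^{\vec{m}}_{\vec{s}}, \widehat{H}_{t(d-1)}(C(\bullet,\rdbb),\qbb))$ appearing in~\eqref{final_splitting_H}. In this model a basis is indexed by isomorphism classes of (possibly disconnected) hairy graphs whose internal vertices carry ${\mathcal L}_\infty$-decorations, whose edges contribute a sign governed by the parity of $d-1$, and which carry $s_i$ hairs of colour $i$ with a sign governed by the parity of $m_i-1$; the integer $t$ is encoded in combinatorial invariants (first Betti number plus vertex data), and each isomorphism class contributes $(-1)^{\deg}/|\mathrm{Aut}|$ to the total Euler characteristic.

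The first step is the exponential formula for graphs: a graph is a multiset of its connected components, so $F^H_{\vec{m},d}(\vec{x},u) = \exp\bigl(F^{\mathrm{conn}}_{\vec{m},d}(\vec{x},u)\bigr)$, where $F^{\mathrm{conn}}$ enumerates only connected hairy graphs with the same signed weights. This is a standard species-theoretic manoeuvre that reduces the problem to computing $F^{\mathrm{conn}}$.

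Next I would evaluate $F^{\mathrm{conn}}$ by P\'olya enumeration, decomposing a connected graph according to the cycle structure of a generic automorphism. Each orbit of length $l$ contributes a primitive necklace-type substructure, and M\"obius inversion over the divisors of $l$ turns the naive $l$-th power sum into precisely $E_l(x) = \frac{1}{l}\sum_{p|l}\mu(p)x^{l/p}$. Plethystic substitution into the hair-colour variables and into the internal edge/vertex variable $1/u$ yields the arguments $(-1)^{m_i-1}E_l(x_i)$ and $(-1)^{d-1}E_l(1/u)$, the signs tracking the $\zbb/2$-gradings induced by $m_i$ and $d$. Summing over all ways of gluing primitive pieces produces an infinite product whose $l$-th factor is, in logarithm, a formal series of the shape $\sum_{k\geq 0}\log\bigl(1 - B_l/(A_l+k)\bigr)$, with $A_l = (-1)^{d-1}E_l(1/u)$ and $B_l = \sum_{i}(-1)^{m_i-1}E_l(x_i)$. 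The Weierstrass product representation of the Gamma function then converts each such factor into $\log\Gamma(A_l - B_l) - \log\Gamma(A_l) - B_l\log\bigl((-1)^{d-1}lu^l\bigr)$, which is precisely the logarithm of the $l$-th factor of~\eqref{gen_function_formula}.

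The main difficulty will be twofold. On the combinatorial side, keeping track of the web of parity signs (coming from $d$, each $m_i$, vertex valences, and edge orientations) consistently through the exponential formula and the P\'olya cycle-index calculation is delicate: a single misplaced sign alters the final shape of the answer. On the analytic side, the infinite product diverges in the classical sense because the arguments of the Gamma functions grow with $l$; the statement therefore interprets each factor as its asymptotic expansion as $(-1)^{d-1}u^l\to +0$ with $\vec{x}$ fixed, and verifying that this term-by-term reading genuinely matches the formal generating function produced by the enumeration argument is the technical heart of the proof.
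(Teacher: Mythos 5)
Your overall strategy --- enumerate hairy graphs directly, split into connected components, and recognize the Gamma factors via their product expansion --- is genuinely different from the paper's proof, but as written it has a concrete error at its first step. The passage from connected to arbitrary hairy graphs is \emph{not} the ordinary exponential $F^H=\exp(F^{\mathrm{conn}})$: since components are unordered and the disconnected complex is the free graded-commutative algebra on the connected one, the correct relation is the Euler (plethystic) product $F^{H}=\prod_{\vec{s},t}(1-\xvecs u^t)^{-\mathcal{X}^{\pi}_{\vec{s},t}}$, which is exactly the paper's equation~\eqref{gen_function_htpy-lemma1}. The ordinary $\exp$ introduces spurious $1/n!$'s (two copies of the same even-degree connected generator span a one-dimensional space of isomorphism classes, not ``half'' of one), and this is not cosmetic: the entire shape of~\eqref{gen_function_formula} --- the product over all $l\geq 1$, the polynomials $E_l$ and $F_l$ --- is precisely the fingerprint of the plethystic exponential, and an argument built on the ordinary $\exp$ would retain only the $l=1$ factor. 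Your later ``M\"obius inversion over orbits of length $l$'' does not repair this, because as described it acts inside a single connected graph's automorphism group rather than on the multiset of components.

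Second, even with the plethystic exponential in place, your plan amounts to first computing the connected generating function by P\'olya enumeration and then exponentiating. That is backwards relative to what is tractable here: the connected answer is Theorem~\ref{gen_function_htpy_thm}, which the paper \emph{deduces from} Theorem~\ref{gen_function_thm} via the plethystic logarithm (Lemma~\ref{gen_function_htpy-lemma}), not the other way around. The paper's actual proof never enumerates graphs by automorphism classes: it works with the Koszul complexes of Proposition~\ref{total_complex_homology}, writes the $(\vec s,t)$-summand as $\mathrm{hom}_{\Sigma_k}(V,W)$ for explicit symmetric sequences, and applies the cycle-index-sum formula~\eqref{dim_hom_formula} together with the already-known cycle index sums of $\overline H_*(C(\bullet,\rbb^{m_i}))$ and $\widehat H_*(C(\bullet,\rdbb))$ from \cite[Propositions 6.4--6.5]{aro_tur13}; the polynomials $E_l$ enter through those cycle index sums, and the Gamma function appears from a single closed-form evaluation of a differential operator (as in \cite[Proposition~15.7]{turchin10}), with the asymptotic-expansion reading built into that identity rather than checked against a Weierstrass product. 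If you want to pursue the graph-enumeration route you would in effect have to re-derive those two cycle index sums from scratch, and the sign bookkeeping and regularization issues you flag as the ``technical heart'' would all have to be resolved there; as it stands the proposal defers exactly the steps that carry the content of the theorem.
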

When $r=1$, the formula (\ref{gen_function_formula}) coincides with that of \cite[Theorem 6.1]{aro_tur13}.  

Using \eqref{gen_function_formula} we also compute the generating function of the Hodge splitting in the rational homotopy, which is our second main result. First we need a couple more definitions. 
Let $B_p$ denote the $p$th Bernoulli number,  
so that
 $
 \sum_{p\geq 0} \frac{B_px^p}{p!} = \frac x{e^x-1}.
 $
 Recall that $B_{2n+1}=0$, $n \geq 1$. Bernoulli's summation formula equates $1^j + 2^j + \cdots + n^j$ with $S_j(n)$ where
 \begin{equation} \label{sjx_defn}
S_j(x) = \frac{1}{j+1} \sum_{p = 0}^j (-1)^p\binom{j+1}{p} B_p x^{j+1-p}, \, j \geq 1. 
\end{equation}
Define also  $F_l(u)$ by 
\begin{equation}\label{eq:F_l}
F_l(u) = lu^l E_l(\frac{1}{u}) = \sum_{t|l} \mu (t) u^{l-\frac{l}{t}} = 1- u^{l-l/p_1}-u^{l-l/p_2}+u^{l-l/p_1p_2}+\ldots
+\mu(l)u^{l-1},
\end{equation}
where $p_1$ and $p_2$ are the first prime factors of $l$. Notice that one always has $F_l(0)=1$.

Similarly, consider $F^{\pi}_{\vec{m}, d}(x_1, \cdots, x_r, u)$ associated to the splitting (\ref{final_splitting_pi}). 

\begin{thm} \label{gen_function_htpy_thm}
The generating function $F^{\pi}_{\vec{m}, d}(x_1, \cdots, x_r, u)$ is given by the formula 
\begin{align*}
F^{\pi}_{\vec{m}, d}(x_1, \cdots, x_r, u) =  & \sum_{k, l, j \geq 1} \frac{\mu(k)}{kj} S_j \left( \sum_{i=1}^r (-1)^{m_i-1} E_l(x_i^k) \right) 
\left(\frac{(-1)^{d-1} l u^{kl}}{F_l(u^k)} \right)^j - \\
 &\sum_{k, l \geq 1} \sum_{i=1}^r \frac{\mu(k)}{k} (-1)^{m_i-1} E_l(x_i^k) \ln (F_l(u^k)), 
\end{align*}
where the polynomials $E_l$, $F_l$, $S_j$ are respectively defined by \eqref{eq:E_l}, \eqref{eq:F_l}, \eqref{sjx_defn}.
\end{thm}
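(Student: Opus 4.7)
My strategy is to derive $F^\pi_{\vec m,d}$ from $F^H_{\vec m,d}$ by inverting the plethystic relation implied by Milnor--Moore/PBW, then to identify the resulting $\log\Gamma$-asymptotics with Faulhaber's polynomial $S_j$. Under the codimension assumption $\codim$ the space $\emb$ is rationally equivalent to an iterated loop space, so that $H_*(\emb,\qbb)\cong U(\qbb\otimes\pi_*\emb)$ by Milnor--Moore and, via PBW, is isomorphic as a bigraded super-vector space to $S(\qbb\otimes\pi_*\emb)$. The bigrading by $(\vec s,t)$ is additive under the symmetric product, and in each summand of~\eqref{final_splitting_pi} the degrees all have the same parity (that of $t(d-2)+1-\vec s\cdot\vec m$), so super-dimension equals $\pm$ total dimension componentwise. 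Taking super-Euler characteristics yields
\[
F^H_{\vec m,d}(\vec x,u)=\exp\Bigl(\sum_{k\ge 1}\tfrac{1}{k}\,F^\pi_{\vec m,d}(\vec x^k,u^k)\Bigr),
\]
and M\"obius inversion gives
\[
F^\pi_{\vec m,d}(\vec x,u)=\sum_{k\ge 1}\frac{\mu(k)}{k}\,\log F^H_{\vec m,d}(\vec x^k,u^k).
\]

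Next I would expand $\log F^H_{\vec m,d}$ using Theorem~\ref{gen_function_thm}. Writing $A_l:=(-1)^{d-1}E_l(1/u)=(-1)^{d-1}F_l(u)/(lu^l)$ and $B_l:=\sum_i(-1)^{m_i-1}E_l(x_i)$, this becomes
\[
\log F^H_{\vec m,d}(\vec x,u)=\sum_{l\ge 1}\Bigl[\log\Gamma(A_l-B_l)-\log\Gamma(A_l)-B_l\log\bigl((-1)^{d-1}lu^l\bigr)\Bigr].
\]
The key identity is the asymptotic expansion
\[
\log\Gamma(z-a)-\log\Gamma(z)\sim -a\log z+\sum_{j\ge 1}\frac{S_j(a)}{j\,z^j}\qquad (z\to\infty),
\]
which is Faulhaber's formula~\eqref{sjx_defn} in disguise: for positive integer $a$ the factorisation $\Gamma(z)/\Gamma(z-a)=(z-1)(z-2)\cdots(z-a)$ expands termwise using $\log(1-k/z)=-\sum_j(k/z)^j/j$, identifying $\sum_{k=1}^{a}k^j=S_j(a)$; the identity persists formally in~$a$. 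Applying it with $z=A_l$, $a=B_l$, the two logarithmic contributions combine via $A_l\cdot(-1)^{d-1}lu^l=lu^l E_l(1/u)=F_l(u)$ into $-B_l\log F_l(u)$, while $A_l^{-j}=\bigl((-1)^{d-1}lu^l/F_l(u)\bigr)^j$. Summing gives
\[
\log F^H_{\vec m,d}(\vec x,u)=\sum_{l,j\ge 1}\frac{S_j(B_l)}{j}\Bigl(\frac{(-1)^{d-1}lu^l}{F_l(u)}\Bigr)^{j}-\sum_{l\ge 1}B_l\,\log F_l(u),
\]
and applying $\sum_k\frac{\mu(k)}{k}(\,\cdot\,)\bigl(\vec x^k,u^k\bigr)$ sends $E_l(x_i)\mapsto E_l(x_i^k)$, $F_l(u)\mapsto F_l(u^k)$, $u^l\mapsto u^{kl}$, producing exactly the formula of Theorem~\ref{gen_function_htpy_thm}.

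The main obstacle I anticipate is the justification of the plethystic relation $F^H=\mathrm{PExp}(F^\pi)$ in this equivariant bigraded setting: rigorously, it requires both a loop-space structure on $\emb$ compatible with the $\Omega$-module splittings~\eqref{final_splitting_H}--\eqref{final_splitting_pi}, and a uniform parity of degrees within each $(\vec s,t)$-summand. A cleaner alternative is through the graph-complex models of~\cite{songhaf_tur16}, in which $F^H$ counts all graphs and $F^\pi$ only connected ones, directly yielding the plethystic-exponential relation at the Euler-characteristic level. The remaining $\log\Gamma$ calculation is a routine manipulation once Faulhaber's polynomial $S_j$ is recognised.
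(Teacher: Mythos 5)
Your proposal follows essentially the same route as the paper: the identity $F^\pi=\sum_k\frac{\mu(k)}{k}\log F^H(\vec x^k,u^k)$ is the paper's Lemma~\ref{gen_function_htpy-lemma} (obtained there from the product formula $F^H=\prod_{\vec s,t}(1-\vec x^{\vec s}u^t)^{-\mathcal{X}^\pi_{\vec s,t}}$ of \cite[Lemma~16.1]{turchin10} plus M\"obius inversion), and your $\log\Gamma$ asymptotics combine exactly the content of Proposition~\ref{pr:gen_function_homol} and Lemma~\ref{l:ln_gamma}, with the same bookkeeping $A_l\cdot(-1)^{d-1}lu^l=F_l(u)$. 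One small remark: your claim that all degrees in a fixed $(\vec s,t)$-summand share a parity is false (expanding a vertex shifts degree by one), but it is also unnecessary, since $\mathcal{X}(S(W))=\prod_q(1-q)^{-\mathcal{X}(W_q)}$ holds for arbitrary graded $W$ because an odd generator contributes $1-q=(1-q)^{-(-1)}$; so the plethystic relation at the Euler-characteristic level needs no parity hypothesis.
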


The latter result essentially encodes the same information as the \textit{supercharacter} (which will be defined in Subsection~\ref{ss:31}) 
 of the symmetric group action on the positive arity components of $\MODL$ and $\MODLdet$.

i

\begin{thm}\label{t:Z_L_infty}
\sloppy
The supercharacters of the symmetric group action on the modular envelope  $\{\MODL((k))\}_{k\geq 0}$   of ${\mathcal L}_\infty$ and on the $\Det$-twisted modular envelope $\{\MODLdet((k))\}_{k\geq 0}$ of ${\mathcal L}_\infty$ are  described by the cycle index sums
as follows
\begin{multline}\label{eq:ch_odd}
\hbar^{-1}\left(\frac{p_1^2+p_2}2\right)+\Ch(\MODL)=
\hbar^{-1} Z_{\mathcal{X}\MODL}(\hbar;p_1,p_2,p_3,\ldots) =  \\
 \Psi^{odd}(\hbar)+\sum_{k, l, j \geq 1} \frac{\mu(k)}{kj} S_j \left( \frac 1l\sum_{a|l}\mu\left(\frac la\right)
\frac{p_{ak}}{\hbar^{ak}} \right) 
\left(\frac{l \hbar^{kl}}{F_l(\hbar^k)} \right)^j -  \\
 \sum_{k, l \geq 1}  \frac{\mu(k)}{kl}\left( \sum_{a|l} \mu\left(\frac la\right)\frac{p_{ak}}{\hbar^{ak}}\right)  \ln (F_l(\hbar^k)), 
\end{multline}
\begin{multline}\label{eq:ch_even}
\hbar^{-1}\left(\frac{p_1^2+p_2}2\right)+\Ch(\MODLdet)=
\hbar^{-1} Z_{\mathcal{X}\MODLdet}(\hbar;p_1,p_2,p_3,\ldots) =  \\
 \Psi^{even}(\hbar)-\sum_{k, l, j \geq 1} \frac{\mu(k)}{kj} S_j \left( - \frac 1l\sum_{a|l}\mu\left(\frac la\right)
\frac{p_{ak}}{\hbar^{ak}} \right) 
\left(\frac{-l \hbar^{kl}}{F_l(\hbar^k)} \right)^j -   \\
 \sum_{k, l \geq 1}  \frac{\mu(k)}{kl}\left( \sum_{a|l} \mu\left(\frac la\right)\frac{p_{ak}}{\hbar^{ak}}\right)  \ln (F_l(\hbar^k)), 
\end{multline}
where the variable $\hbar$ is responsible for the genus. The functions $\Psi^{odd}(\hbar)$ and
$\Psi^{even}(\hbar)$ are expressed in terms of Willwacher-\v{Z}ivkovi\'c\rq{} functions $P^{odd}(s,t)$, 
$P^{even}(s,t)$ from~\cite[Theorem~1]{wil_zh} as follows
\begin{equation}\label{eq:psi_odd}
\Psi^{odd/even}(\hbar)=\sum_{l\geq 1}\frac{\mu(l)}{l}\ln\left(1+P^{odd/even}(\mp\hbar^{-l},\pm\hbar^l)\right),
\end{equation}
\end{thm}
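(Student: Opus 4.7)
The plan is to use the hairy graph complex model of the summands in~\eqref{final_splitting_pi}, established in~\cite{songhaf_tur16}, as a bridge between Theorem~\ref{gen_function_htpy_thm} and the modular envelope. Namely, for each $(\vec{s},t)$ the derived mapping complex $\rmod(Q^{\vec{m}}_{\vec{s}},\qbb\otimes\widehat{\pi}_{t(d-2)+1}C(\bullet,\rdbb))$ is quasi-isomorphic to a complex of connected graphs with $s_i$ external univalent \emph{hairs} of color $i$ (for $i=1,\ldots,r$), internal vertices carrying $\mathcal{L}_\infty$-operations, and loop number $t$. Up to a regrading determined by the parities of $d$ and of the $m_i$, these graph complexes are exactly the positive-arity components of $\MODL$ (for $d$ odd) or $\MODLdet$ (for $d$ even). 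Thus Theorem~\ref{gen_function_htpy_thm} already computes the Euler characteristic of these components, colored by the number of hairs of each type.

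The second step is to convert the multi-colored Euler characteristic into the cycle index sum via a super-plethystic substitution. The cycle index of a (super) $\Sigma_k$-representation $V$, specialized at $p_j \mapsto \sum_i (-1)^{(j-1)m_i} x_i^j$, computes the super-dimension of $V$ with hairs labeled by the graded line $\widetilde{H}_*(S^{m_i};\qbb)$. Reversing this specialization, and absorbing the overall sign $(-1)^{\vec{s}\cdot\vec{m}}$ coming from the generator of $\widetilde{H}_{\vec{s}\cdot\vec{m}}(S^{\vec{s}\cdot\vec{m}})$, one recovers the positive-arity part of $Z_{\mathcal{X}\MODL}$ (resp.\ $Z_{\mathcal{X}\MODLdet}$) from $F^{\pi}_{\vec{m},d}$ by the simultaneous substitution $(-1)^{m_i-1}x_i^{ak}\mapsto p_{ak}/\hbar^{ak}$ and $u\mapsto\hbar$, with a global sign dictated by the parity of $d-1$. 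Under this substitution each $E_l(x_i^k)$ in Theorem~\ref{gen_function_htpy_thm} turns into $\frac{1}{l}\sum_{a|l}\mu(l/a)p_{ak}/\hbar^{ak}$ and $F_l(u^k)$ turns into $F_l(\hbar^k)$, which accounts for the main summands of~\eqref{eq:ch_odd} and~\eqref{eq:ch_even}.

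Two further contributions must be inserted by hand. First, the arity-zero part of the modular envelopes consists of internally closed graphs, i.e.\ the Kontsevich commutative operad graph complex; its supercharacter has been computed by Willwacher and \v{Z}ivkovi\'c in~\cite{wil_zh} and produces exactly the terms $\Psi^{odd}(\hbar)$ and $\Psi^{even}(\hbar)$ via~\eqref{eq:psi_odd}. Second, the expression $\hbar^{-1}(p_1^2+p_2)/2$ on the left-hand side corresponds to the unique arity-2, genus-0 element given by a single edge with no internal vertices; it belongs to $\MODL$ and $\MODLdet$ but is excluded from the hairy graph complex by the standing convention that internal vertices be of valence at least three, so it is added separately on the left.

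The main technical obstacle will be sign bookkeeping: reconciling the explicit $(-1)^{m_i-1}$ and $(-1)^{d-1}$ signs of Theorem~\ref{gen_function_htpy_thm} with the orientation and twist conventions on the Feynman transforms $\FCOM$ and $\FCOMdet$ of~\cite{getzler_kapranov98}, and verifying that the $\Det$-twist pairs odd $d$ with $\MODL$ and even $d$ with $\MODLdet$ rather than the reverse. Once these signs are settled, the passage from the plethystic substitution to~\eqref{eq:ch_odd}--\eqref{eq:ch_even} is a direct symbolic verification.
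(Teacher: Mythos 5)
Your strategy is the paper's: identify the positive-arity components of $\MODL$ and $\MODLdet$ with the (suspended, sign-twisted) uncolored hairy graph complexes $M(P_3^\bullet)$ and $M(P_2^\bullet)$ (the paper's Lemmas~\ref{l:L_infty} and~\ref{l:L_infty_det}, which carry out precisely the orientation and $\Det$-twist bookkeeping you defer as ``the main technical obstacle''), recover the cycle index sum from $F^{\pi}_{\vec{m},d}$ of Theorem~\ref{gen_function_htpy_thm} by undoing the plethystic specialization $p_l\mapsto\sum_i(-1)^{m_i}x_i^l$ and converting the complexity grading to the genus grading (Theorem~\ref{t:super_tr1} and Subsection~\ref{ss:proofLinfty}), and quote Willwacher--\v{Z}ivkovi\'c via the plethystic logarithm for arity zero. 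The one point you should fix is the account of the summand $\hbar^{-1}(p_1^2+p_2)/2$: the graph consisting of a single edge joining two external vertices is \emph{not} excluded from the hairy graph complexes --- the valence-$\geq 3$ condition constrains only internal vertices, of which this graph has none --- it is the identity $id\in\MODL((0,2))$, and its contribution is already present in $Z_{\mathcal{X}M(P_d^\bullet)}$ and hence in the right-hand sides of \eqref{eq:ch_odd}--\eqref{eq:ch_even}. The term is added on the left solely to reconcile Getzler--Kapranov's non-unital convention for $\Ch$ with the unital Hinich--Vaintrob convention used for $Z_{\mathcal{X}\MODL}$; with your justification the identity's contribution would be misplaced (either omitted from the right-hand side or counted twice), shifting the asserted identity by exactly $\hbar^{-1}(p_1^2+p_2)/2$.
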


The first line in~\eqref{eq:ch_odd} (and also in~\eqref{eq:ch_even}) is to account the difference 
of the Getzler-Kapranov notation~\cite{getzler_kapranov98} (left-hand side) with ours (right-hand side). 
The factor $\hbar^{-1}$ in front of $Z_{\mathcal{X}\MODL}$ appears because we consider the grading by
the genus (first Betti number) of the graphs, while in~\cite{getzler_kapranov98} the additional grading is  
minus the Euler characteristics. The summand $\hbar^{-1}\left(\frac{p_1^2+p_2}2\right)$ in the left-hand side is because Getzler-Kapranov consider non-unital modular operads, while we follow 
Hinich-Vaintrob\rq{}s conventions~\cite{hin_vaintrob02}, and allow operads have a unit.

Some readers might be more familiar with the Feynman transform rather than with the modular envelope. They are related one to another through a regrading $\FCOMdet=\Sigma\mathfrak{s}\MODL$,
$\FCOM=\Sigma\mathfrak{s}\MODLdet$, see Lemma~\ref{l:fcom}, which implies
\[
\Ch(\FCOMdet)(\hbar;p_1,p_2,p_3,\ldots) = - \Ch(\MODL)(\hbar;-p_1,-p_2,-p_3,\ldots),
\]
\[
\Ch(\FCOM)(\hbar;p_1,p_2,p_3,\ldots) = - \Ch(\MODLdet)(\hbar;-p_1,-p_2,-p_3,\ldots).
\]

Comparing our answer with $\Ch(\mathrm{F}_\Det\mathcal{A}ss)$ computed 
in~\cite[Theorem~9.18]{getzler_kapranov98}, we see that the positive arity part (which we computed) is the 
\lq\lq{}easy part\rq\rq{} of the expression. The arity zero part, expressed by $\Psi^{odd}(\hbar)$,
 $\Psi^{even}(\hbar)$, is the difficult one, which is still not explicitly computed. To recall Willwacher and 
 \v{Z}ivkovi\'c~\cite{wil_zh} expressed the generating functions $P^{odd}(s,t)$, 
$P^{even}(s,t)$ of the dimensions of the graph-complexes  $\mathrm{fGC}_d$ 
(where $d$ is either {\it odd} or {\it even})\footnote{The graph-complexes $\mathrm{fGC}_d$ are defined similarly as our hairy graph-complexes except that
their graphs do not have hairs (univalent vertices) and can be disconnected.} as an infinite sum of 
rather complicated expressions, which do not make sense when one passes to the 
generating function of the Euler characteistics by taking $s=\mp\hbar^{-1}$, $t=\pm\hbar$. 
The right-hand side of~\eqref{eq:psi_odd}  is the plethystic logarithm
\cite[Equations (8.6), (8.1.3\rq{})]{getzler_kapranov98}, which works in any arity and in particular in arity zero,
and which determines the Euler characters (supertraces for higher arities) of the connected part of the 
graph-complexes.\footnote{Compare with our Lemma~\ref{gen_function_htpy-lemma} and also
with \cite[Lemma~3]{wil_zh}.} We added~1 inside the logarithm to take into account the empty graph.

\subsection{Outline of the paper.}

In Section~\ref{generating_function_homology} we prove Theorem~\ref{gen_function_thm}, which presents a formula for the generating function of Euler characteristics of summands in the homological splitting (\ref{final_splitting_H}).  
 We end with Subsection~\ref{some_special_cases_gen_function_homology}, which gives formulas obtained from (\ref{gen_function_formula}) by taking $m_i=1$, $x_i=\pm 1$, $1 \leq i \leq r$.

In Section~\ref{generating_function_homotopy} we prove three results: Theorem~\ref{gen_function_htpy_thm},
Theorem~\ref{genius_zero_thm} and Theorem~\ref{genius_one_thm}. The first one 
computes the generating function of  Euler characteristics of summands in the homotopical splitting (\ref{final_splitting_pi}). The key point is Lemma~\ref{gen_function_htpy-lemma}, which presents the generating function in homotopy as a sum of  logarithms  of the generating function in homology. 
 The second  result  (respectively the third result) computes the generating function for the homology ranks  of the summands in~\eqref{final_splitting_pi} of genus zero (respectively of genus one).  Both from  Theorem~\ref{genius_zero_thm} and  Theorem~\ref{genius_one_thm} one can see the exponential growth of the Betti numbers of the rational homotopy of $\emb$, $r\ge 2$. 

In Section~\ref{ss:modular} we prove Theorem~\ref{t:Z_L_infty}, which determines the supercharacter of the symmetric group action on the positive arity components of the modular envelope of ${\mathcal L}_\infty$.  This result is an interesting byproduct  for the theory of operads, obtained from the computations of the previous section. 

 In the appendix we produce, using the generating function from Theorem~\ref{gen_function_htpy_thm}, tables of Euler characteristics of the summands in the homotopical splitting (\ref{final_splitting_pi}).

\textbf{Acknowledgements:} 
This work has been supported by Fonds de la Recherche Scientifique-FNRS (F.R.S.-FNRS), that the authors acknowledge. It has been also supported by the Kansas State University (KSU), where this paper was partially written during the stay of the first author, and which he thanks for hospitality. The authors are also grateful
to Dev Sinha for his  thoughtful editing work -- his comments and numerous suggestions to
improve the manuscript.

\section{Generating functions of Euler characteristics in homology} \label{generating_function_homology}

The goal of this section is to prove Theorem~\ref{gen_function_thm}, which gives a formula for the generating function $F^H_{\vec{m}, d}(x_1, \cdots, x_r, u) $ of the Euler characteristics of the summands in the homological splitting of  the space of string links.


We first recall the definition of Koszul complexes
computing~\eqref{final_splitting_H}. In short these complexes were obtained in~\cite{songhaf_tur16} by taking the projective resolution of the source $\Omega$-modules. 

\subsection{Koszul complexes}


\begin{defn} \label{otimes_hat_defn}
Let $V = \{V(n)\}_{n \geq 0}$ and $W = \{W(n)\}_{n \geq 0}$ be two symmetric sequences. Define a new symmetric sequence $V \widehat{\otimes} W$ by 
$V \widehat{\otimes} W (n) = \bigoplus_{p+q =n} \mathrm{Ind}^{\Sigma_n}_{\Sigma_p \times \Sigma_q} V(p) \otimes W(q).$
\end{defn}

In practice we need to deal with multigraded vector spaces. Besides the usual homological degree, they will have the 
Hodge multi-grading $(s_1, \ldots, s_r)$ and grading by complexity $t$. As usual when we take tensor product all the degrees
get added.


Recalling the notation $\widehat{H}_*(-)$ from the introduction, we have the following result. 

\begin{prop} \cite[Proposition 4.5]{songhaf_tur16} \label{total_complex_homology}
For $d > 2 \mathrm{max}\{m_i| \ 1 \leq i \leq r\} + 1$, there is a quasi-isomorphism 
\begin{multline} \label{total_complex_quasi-iso}
C_*(\embthm) \otimes \mathbb{Q} \simeq  \\ 
 \left( \underset{k \geq 0}{\bigoplus} \mathrm{hom}_{\Sigma_k} \left( \underset{1 \leq i \leq r}{\widehat{\otimes}} \overline{H}_*(C(\bullet, \rbb^{m_i}), \qbb)(k), \widehat{H}_*(C(k, \rdbb), \qbb) \right), \partial \right).
\end{multline}
Here $\overline{H}_*(-)$ is the Borel-Moore homology functor. 
\end{prop}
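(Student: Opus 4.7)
The plan is to derive \eqref{total_complex_quasi-iso} from the derived mapping space description \eqref{rational_homology_iso} by constructing an explicit cofibrant resolution of the source $\Omega$-module appearing in that formula.

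First, I would start from the isomorphism \eqref{rational_homology_iso} and aim to compute the derived mapping space
\[
\rmod\bigl(\widetilde{H}_*((\vee_{i=1}^r S^{m_i})^{\wedge\bullet},\qbb),\,\widehat{H}_*(C(\bullet,\rdbb),\qbb)\bigr)
\]
by replacing the source $\Omega$-module with a projectively cofibrant model. By \eqref{splitting_htilde} the source splits as $\bigoplus_{\vec s\geq 0}Q^{\vec m}_{\vec s}$, each summand concentrated in arity $|\vec s|$ with the $\Sigma_{|\vec s|}$-representation recorded in \eqref{qs1sr}; summed over $\vec s$ these are exactly the reduced homology of the smash-power diagram of the wedge of spheres.

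Second, and this is the crux, I would exhibit
\[
R := \underset{1\leq i\leq r}{\widehat{\otimes}}\overline{H}_*(C(\bullet,\rbb^{m_i}),\qbb),
\]
equipped with the Koszul differential built from operadic composition coming from degeneration of configuration points, as a cofibrant resolution of the source in ${\mbox{Rmod}_\Omega}$. For a single component ($r=1$) this is the standard Koszul duality statement for the little $m$-disks operad: the Borel–Moore homology $\overline{H}_*(C(\bullet,\rbb^{m}),\qbb)$ carries the Lie-like cooperad structure Koszul-dual to the Gerstenhaber-type operadic structure on $H_*(C(\bullet,\rbb^m),\qbb)$, and its bar-type complex resolves $\widetilde{H}_*(S^{m\wedge\bullet},\qbb)$. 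For $r>1$, the Pirashvili cross-effect of a smash-product diagonal decomposes as an induction over ordered partitions of the arity, matching the combinatorics of the $\widehat{\otimes}$ operation of Definition~\ref{otimes_hat_defn}; the $\widehat{\otimes}$-product of the single-component resolutions therefore resolves the whole source.

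Third, given the cofibrant replacement $R$, the derived hom in ${\mbox{Rmod}_\Omega}$ is computed levelwise by ordinary $\Sigma_k$-equivariant hom, giving
\[
\rmod\bigl(\widetilde{H}_*((\vee S^{m_i})^{\wedge\bullet}),\widehat{H}_*(C(\bullet,\rdbb))\bigr)
\simeq \bigoplus_{k\geq 0} \hom_{\Sigma_k}\bigl(R(k),\widehat{H}_*(C(k,\rdbb),\qbb)\bigr),
\]
with differential $\partial$ induced by the Koszul differential on $R$. Combined with \eqref{rational_homology_iso} this is exactly the claimed quasi-isomorphism. The main obstacle is verifying the resolution property in step two: checking that the Koszul differential squares to zero (via cooperadic coassociativity), that $R$ is projectively cofibrant (by its free-$\Omega$-module structure level by level), and most importantly that $R\xrightarrow{\simeq}\widetilde{H}_*((\vee S^{m_i})^{\wedge\bullet},\qbb)$ is a quasi-isomorphism. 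This last point reduces, through Pirashvili's cross-effect equivalence ${\mbox{Rmod}_\Omega}\simeq{\mbox{Rmod}_\Gamma}$ and the identification of $\Gamma$-modules with infinitesimal bimodules over $\mathrm{Com}$, to the Koszul exactness of the bar complex for the little $m$-disks operads. The codimension assumption $d>2\max\{m_i\}+1$ is needed upstream to guarantee convergence of the Goodwillie–Weiss tower underlying \eqref{rational_homology_iso}; with that in place, the above hom computation yields the rational chains on $\embthm$.
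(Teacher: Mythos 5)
Your strategy --- resolving the source $\Omega$-module $\widetilde{H}_*((\vee_{i=1}^r S^{m_i})^{\wedge\bullet},\qbb)$ by a projectively cofibrant (free) $\Omega$-module generated by the symmetric sequence $\widehat{\otimes}_{i}\overline{H}_*(C(\bullet,\rbb^{m_i}),\qbb)$ carrying a Koszul differential, and then computing the derived mapping space of \eqref{rational_homology_iso} levelwise --- is exactly how this proposition is obtained; the present paper does not reprove it but imports it from \cite{songhaf_tur16}, summarizing the method in precisely these terms (\lq\lq{}taking the projective resolution of the source $\Omega$-modules\rq\rq{}). The only caveat is notational: the $\Sigma_k$-hom in \eqref{total_complex_quasi-iso} is taken out of the \emph{generating} symmetric sequence of that free resolution rather than out of its full $k$-th level as an $\Omega$-module, which is what justifies replacing the hom of $\Omega$-modules by a levelwise equivariant hom.
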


The differential $\partial$ here is defined similarly to the $r=1$ case \cite[Section 5]{aro_tur13}. We won\rq{}t describe it explicitly here. However, in \cite[Subsection 4.3]{songhaf_tur16} we  explicitly describe the dual complex computing the cohomology of the space of links.


From now on let us assume that all $m_i>1$. We can do so because the summands in~\eqref{final_splitting_H} up to a regrading depend only on the parities of $m_i$, $i=1\ldots r$, and that 
of~$d$. The right hand side of (\ref{total_complex_quasi-iso}) can be rewritten as 

\[\underset{\vec{s}, t \geq 0}{\bigoplus} \left( \underset{k \geq 0}{\bigoplus} \mbox{hom}_{\Sigma_k} \left(A^k_{\vec{s}}, \widehat{H}_{t(d-1)}(C(k, \rdbb), \qbb)\right) ,\, \partial \right),\] 
where  
\[
A^k_{\vec{s}} =  \underset{k = |\vec{k}|}{\bigoplus} \left( \mbox{Ind}^{\Sigma_k}_{\Sigma_{\vec{k}}} \bigotimes_{i=1}^r \overline{H}_{s_i(m_i-1)+k_i}(C(k_i, \rbb^{m_i}), \qbb) \right).
\]

The $\vec{s}, t $ summand in the complex above computes exactly the corresponding 
summand in~\eqref{final_splitting_H}. Thus  the Euler characteristic $\mathcal{X}_{\vec{s}, t}$
used in $F^H_{\vec{m}, d}(x_1, \cdots, x_r, u) = \underset{\vec{s}, t \geq 0}{\sum} \mathcal{X}_{\vec{s}, t} \cdot u^t \vec{x}^{\vec{s}}$ can
be defined as 
\begin{equation} \label{Euler_char_defn}
\mathcal{X}_{\vec{s}, t}= \underset{k \geq 0}{\sum} (-1)^{t(d-1)-\sum_{i=1}^r s_i(m_i-1)-k} \mbox{dim}
 X^k_{\vec{s},t},
\end{equation} 
where 
\begin{equation} \label{xk_eqn}
X^k_{\vec{s},t} = \mbox{hom}_{\Sigma_k} \left(A^k_{\vec{s}}, \widehat{H}_{t(d-1)}(C(k, \rdbb), \qbb) \right).
\end{equation}  

%
%
%

\subsection{Cycle index sum and proof of  Theorem~\ref{gen_function_thm}}

Before starting the proof of Theorem~\ref{gen_function_thm} we will state two lemmas. 
For a permutation $\sigma \in \Sigma_k$, let $j_l(\sigma)$ denote the number of its cycles of length $l$. Let $\mbox{tr}(-)$ denote the trace function from the space of matrices to the ground field. 

\begin{defn} \label{cycle_index_sum_defn}
Let $\{p_1, p_2, \cdots\}$ be a family of commuting variables.
\begin{enumerate}
\item[$\bullet$] For a representation $\rho^V \colon \Sigma_k \lra GL(V)$ of the symmetric group $\Sigma_k$,  the \emph{cycle index sum} of $V$, denoted  $Z_V(p_1, p_2, \cdots)$, is defined  by 
\begin{equation} \label{z_defn}
Z_{V}(p_1, p_2, \cdots) = \frac{1}{|\Sigma_k|} \sum_{\sigma \in \Sigma_k} \mbox{tr}(\rho^V(\sigma)) \prod_l p_l^{j_l(\sigma)}.
\end{equation} 
\item[$\bullet$]  If $V = \{V(k)\}_{k \geq 0}$ is a symmetric sequence, we define $Z_V(p_1, p_2, \cdots) = \sum_{k \geq 0} Z_{V(k)} (p_1, p_2, \cdots)$. 
\end{enumerate}
\end{defn}

The vector spaces in the symmetric sequences below will be always multigraded. The trace in~\eqref{z_defn} will be a graded trace,
i.e. it will be a generating function of traces on each component. For example, if $V=\bigoplus_{i,\vec{s}}V_{i,\vec{s}},$
where $i$ is the homological degree, and $\vec{s}$ is the Hodge multigrading, then 
\[
\mbox{tr}(\rho^V(\sigma)) = \sum_{i,\vec{s}} \mbox{tr}(\rho^{V_{i,\vec{s}}}(\sigma)) z^i\vec{x}^{\vec{s}}.
\]
With such definition, the cycle index sum $Z_V$ of a symmetric sequence will also depend on $z,x_1,\ldots,x_r$, and also
possibly on $u$, where the last variable will be responsible for the grading by complexity. 

Given two families $\{p_1, p_2, \cdots\}$ and $\{b_1, b_2, \cdots\}$  of commuting variables, we will write $Z_V(p_l \leftarrow b_l; l \in \mathbb{N})$ for $Z_V(b_1, b_2, \cdots)$. This notation means substituting the value $p_l$ with the value $b_l$ in $Z_V$. For a function $f = f(p_1, p_2, \cdots)$ on variables $p_1, p_2, \cdots$, the notation  $\left. \left\{Z_V(\frac{\partial}{\partial p_l}; l \in \mathbb{N})f(p_1, p_2, \cdots) \right\} \right|_{p_l=0}$ means that we apply the differential operator $Z_V(\frac{\partial}{\partial p_l}; l \in \mathbb{N})$ to $f$, and at the end we take $p_l = 0$ for all $l \geq 0$.

 The following lemma is well known, see for example \cite[Corollary 15.5]{turchin10}.

\begin{lem} \label{dim_hom}
Let $V = \oplus_i V_i$ and $W = \oplus_j W_j$ be two graded vector spaces admitting an action of the symmetric group $\Sigma_k$, that respects the grading, on each of them. 
Consider the  series 
\[\mathrm{dim\, hom}_{\Sigma_k} (V, W) = \sum_{i,j} \mathrm{dim\, hom}_{\Sigma_k}(V_i, W_j) z^{j-i},\]
\small
\[Z_V(z; p_1, p_2, \cdots) = \sum_{i} Z_{V_i}(p_1, p_2, \cdots) z^i, \text{ and }  Z_W(z, p_1, p_2, \cdots) = \sum_{j} Z_{W_j}(p_1, p_2, \cdots) z^j.\]
\normalsize
Then 
\begin{multline}  \label{dim_hom_formula}
\mathrm{dim\, hom}_{\Sigma_k}(V, W) = \left. \left\{ Z_V(\frac{1}{z}; p_l \leftarrow \frac{\partial}{\partial p_l}, l \in \mathbb{N}) Z_{W}(z; p_l \leftarrow lp_l, l \in \mathbb{N}) \right\} \right|_{p_l =0}=\\
\left. \left\{ Z_V(\frac{1}{z}; p_l \leftarrow l\frac{\partial}{\partial p_l}, l \in \mathbb{N}) Z_{W}(z; p_1,p_2,\cdots) \right\} \right|_{p_l =0}.  
\end{multline}
\end{lem}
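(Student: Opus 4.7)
The plan is to reduce this to the standard character-inner-product formula for $\dim\mathrm{hom}_{\Sigma_k}(V,W)$ and then interpret the differential operator on the right-hand side of~\eqref{dim_hom_formula} as carrying out that inner product at the level of cycle index sums.

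First I would recall that for rational representations of a finite group $G$ one has $\dim\mathrm{hom}_G(V,W) = \frac{1}{|G|}\sum_{g\in G}\mathrm{tr}(\rho^V(g^{-1}))\,\mathrm{tr}(\rho^W(g))$, and that for $\Sigma_k$ every element is conjugate to its inverse, so the inverse may be dropped. Applied degree-by-degree and tracked through the $z^{j-i}$ grading convention for $\mathrm{hom}(V_i,W_j)$, this gives the graded identity
\begin{equation}\label{eq:plan-char}
\dim\mathrm{hom}_{\Sigma_k}(V,W) = \frac{1}{k!}\sum_{\sigma\in\Sigma_k}\chi_V^{1/z}(\sigma)\,\chi_W^{z}(\sigma),
\end{equation}
where $\chi_V^{z}(\sigma) := \sum_i \mathrm{tr}(\rho^{V_i}(\sigma))\,z^i$, and similarly for $W$. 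So it suffices to show that the right-hand side of~\eqref{dim_hom_formula} equals the right-hand side of~\eqref{eq:plan-char}.

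Next I would expand the right-hand side of~\eqref{dim_hom_formula}. After performing the substitutions inside $Z_V$ and $Z_W$, it becomes
\[
\frac{1}{(k!)^2}\sum_{\sigma,\tau\in\Sigma_k}\chi_V^{1/z}(\tau)\,\chi_W^{z}(\sigma)\,\left.\prod_l\frac{\partial^{j_l(\tau)}}{\partial p_l^{j_l(\tau)}}\prod_l(l\,p_l)^{j_l(\sigma)}\right|_{p_l=0}.
\]
The key elementary calculation is that the derivative-and-evaluate operation on the right yields $\prod_l l^{j_l(\sigma)} j_l(\sigma)! \cdot \prod_l \delta_{j_l(\sigma),\,j_l(\tau)}$. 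Hence only pairs $(\sigma,\tau)$ sharing the same cycle type contribute; such pairs are conjugate in $\Sigma_k$, so $\chi_V^{1/z}(\tau)=\chi_V^{1/z}(\sigma)$. Fixing $\sigma$ and summing $\tau$ over its conjugacy class contributes the factor $\frac{k!}{\prod_l l^{j_l(\sigma)} j_l(\sigma)!}$, which exactly cancels the factors from the derivative, reducing the double sum to $\frac{1}{k!}\sum_\sigma \chi_V^{1/z}(\sigma)\chi_W^z(\sigma)$, matching~\eqref{eq:plan-char}. The second equality of~\eqref{dim_hom_formula} is obtained by transferring the factor $l^{j_l}$ from the substitution $p_l\leftarrow l\,p_l$ inside $Z_W$ to the substitution $p_l\leftarrow l\,\frac{\partial}{\partial p_l}$ inside $Z_V$; the derivative-monomial computation above shows the two conventions produce the same value.

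There is no real obstacle here — the argument is essentially the cycle-type form of Burnside/orthogonality. The only bookkeeping care needed is to track the variable $z$ consistently through the inversion $z\mapsto 1/z$ so that the graded trace matches the $z^{j-i}$ convention in the definition of graded $\mathrm{hom}$, and, in the setting where $V$ and $W$ carry additional Hodge and complexity gradings, to note that the same argument applies verbatim with the extra variables $x_1,\ldots,x_r,u$ treated as inert parameters inside the traces.
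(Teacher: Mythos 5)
Your proof is correct. Note that the paper does not actually prove this lemma -- it simply cites it as well known (referring to \cite[Corollary 15.5]{turchin10}) -- so there is no in-paper argument to compare against; your write-up is the standard one that the cited reference also uses. The chain of steps checks out: the graded character inner product $\frac{1}{k!}\sum_\sigma \chi_V^{1/z}(\sigma)\chi_W^z(\sigma)$ is valid because every permutation is conjugate to its inverse; the evaluation $\left.\left(\tfrac{\partial}{\partial p_l}\right)^{a}p_l^{b}\right|_{p_l=0}=b!\,\delta_{a,b}$ forces $\sigma$ and $\tau$ to share a cycle type; and the conjugacy class size $k!/\prod_l l^{j_l}j_l!$ cancels exactly the surviving factor $\prod_l l^{j_l}j_l!$ together with one power of $1/k!$. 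The observation that the factor $l^{j_l}$ can be moved from the substitution in $Z_W$ to the one in $Z_V$ because the Kronecker delta equates $j_l(\sigma)$ and $j_l(\tau)$ correctly handles the second equality, and the extra Hodge and complexity variables indeed ride along inertly inside the graded traces.
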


For the application below $V$ will be in addition Hodge multigraded and $W$ will be in addition graded by complexity. This
will add up variables in the expression. 

The  second result  we need to prove Theorem~\ref{gen_function_thm} is the following well known lemma, see for example \cite[Proposition 15.3]{turchin10} or \cite{feit67}.

\begin{lem}  \label{z_otimes}
Let $V=\{V(k)\}_{k \geq 0}$ and $W = \{W(k)\}_{k \geq 0}$ be two finite symmetric sequences (that is, for all $k \geq 0$, $V(k)$ and $W(k)$ are of finite dimensions). Then 
\[Z_{V \widehat{\otimes} W}(p_1, p_2, \cdots) = Z_V(p_1, p_2, \cdots) Z_W(p_1, p_2, \cdots).\] 
\end{lem}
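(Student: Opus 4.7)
The strategy is to reduce to a single induced representation and then apply the Frobenius induced character formula. By the definition of $\widehat{\otimes}$, by the additivity of $Z$ over direct sums, and by the formula $Z_V(p_1,p_2,\ldots) = \sum_{n\geq 0} Z_{V(n)}(p_1,p_2,\ldots)$, the identity I want factors into the pieces
\[
Z_{\,\mathrm{Ind}^{\Sigma_{p+q}}_{\Sigma_p\times\Sigma_q}(V(p)\otimes W(q))}(p_1,p_2,\ldots) \;=\; Z_{V(p)}(p_1,p_2,\ldots)\cdot Z_{W(q)}(p_1,p_2,\ldots)
\]
for each fixed $p,q\geq 0$. Summing over $p+q=n$ and then over $n$ will recover the full claim, so the entire content is this factorization at the level of a single induction.

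To prove the factorization, I would invoke the Frobenius formula for the character of an induced representation: for $\sigma\in\Sigma_n$ with $n=p+q$,
\[
\chi_{\mathrm{Ind}}(\sigma) \;=\; \frac{1}{p!\,q!}\sum_{\substack{g\in\Sigma_n\\ g^{-1}\sigma g\,\in\,\Sigma_p\times\Sigma_q}} \chi_{V(p)\otimes W(q)}\bigl(g^{-1}\sigma g\bigr).
\]
Substituting this into the definition \eqref{z_defn} and using that $\prod_l p_l^{j_l(\sigma)}$ is a class function (so it equals $\prod_l p_l^{j_l(g^{-1}\sigma g)}$) lets me change variables to $\tau = g^{-1}\sigma g\in\Sigma_p\times\Sigma_q$. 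The $g$-sum collapses into a single factor of $n!$, which together with the prefactors $1/n!$ and $1/(p!\,q!)$ leaves
\[
Z_{\mathrm{Ind}}(p_1,p_2,\ldots) \;=\; \frac{1}{p!\,q!}\sum_{\tau\in\Sigma_p\times\Sigma_q} \chi_{V(p)\otimes W(q)}(\tau)\prod_l p_l^{j_l(\tau)}.
\]

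Now the factorization is immediate: writing $\tau=(\tau_1,\tau_2)$ with $\tau_i\in\Sigma_{p,q}$, the cycle counts split additively, $j_l(\tau)=j_l(\tau_1)+j_l(\tau_2)$, and the trace is multiplicative on the tensor product, $\chi_{V(p)\otimes W(q)}(\tau_1,\tau_2)=\chi_{V(p)}(\tau_1)\,\chi_{W(q)}(\tau_2)$. Therefore the double sum splits as $Z_{V(p)}\cdot Z_{W(q)}$. The multigraded refinement adds no new content: the extra variables $z,x_1,\ldots,x_r,u$ commute with everything in sight, and the traces above are just generating-series traces that factor in the same way on each bidegree.

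The main technical point — really the only one worth checking — is the bookkeeping in the change of variables $\tau=g^{-1}\sigma g$, specifically that conjugation preserves cycle type and that after the substitution each $\tau\in\Sigma_p\times\Sigma_q$ is counted exactly $n!$ times. Once this is verified, everything else is formal. (Equivalently, one may view the result as saying that $V\mapsto Z_V$ is a ring homomorphism from the Grothendieck ring of symmetric sequences under $\widehat{\otimes}$ to $\mathbb{Q}[p_1,p_2,\ldots]$, i.e.\ a variant of the Frobenius characteristic map, but I prefer the direct computation for self-containedness.)
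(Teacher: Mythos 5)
Your proof is correct. The paper does not actually prove this lemma --- it cites it as well known, referring to \cite[Proposition 15.3]{turchin10} and \cite{feit67} --- and your direct computation via the Frobenius induced-character formula, with the change of variables $\tau=g^{-1}\sigma g$ collapsing the $g$-sum and the subsequent factorization of $j_l$ and of the trace over $\Sigma_p\times\Sigma_q$, is exactly the standard argument those references contain. Your remark that the multigraded refinement is purely formal is also right, since the induced action of $(\tau_1,\tau_2)$ on $V(p)\otimes W(q)$ involves no Koszul signs between the two tensor factors.
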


Again we will be using a multigraded version of this lemma.

We are now ready to prove the main result of this section. 

\begin{proof}[Proof of Theorem~\ref{gen_function_thm}]
  Consider the following generating function 
\[
\Psi_{\vec{m}, d}(x_1, \cdots, x_r, u, z)=  \underset{\vec{s}, t,k }{\sum}  (\mbox{dim} X^k_{\vec{s},t})  \vec{x}^{\vec{s}} u^t z^{t(d-1)-\sum_{i=1}^r s_i(m_i-1)-k}, 
\]
where $X^k_{\vec{s},t}$ is the space from (\ref{xk_eqn}). Define two symmetric sequences $V$ and $W$ by 
\[ V(k) = \underset{k = |\vec{k}|}{\bigoplus} \left( \mbox{Ind}^{\Sigma_k}_{\Sigma_{\vec{k}}} \otimes_{i=1}^r \overline{H}_*(C(k_i, \rbb^{m_i}), \qbb) \right) \ \mbox{and} \ W(k) = \widehat{H}_*(C(k, \rdbb), \qbb).\]
Then, by Lemma~\ref{dim_hom}, the generating function $\Psi_{\vec{m}, d}(x_1, \cdots, x_r, u, z) =$ 
\[
\left. \left\{Z_V(\frac{1}{z}, x_1, \cdots, x_r; p_l \leftarrow \frac{\partial}{\partial p_l}, l \geq 1 ) Z_W(z, u; p_l \leftarrow lp_l, l \geq 1) \right\} \right|_{p_l=0}.
\]
Since (recalling the definition of $E_l(-)$ from the introduction, just before Theorem~\ref{gen_function_thm}) 
\begin{enumerate}
\item[$\bullet$]  \small $Z_W(z, u; p_l \leftarrow lp_l, l \geq 1) = \prod_{l=1}^{+\infty} e^{-p_l}(1+(-1)^dl((-z)^{d-1}u)^lp_l)^{(-1)^d E_l(\frac{1}{(-z)^{d-1}u})}$ \normalsize by \cite[Proposition 6.4]{aro_tur13}, 
\item[$\bullet$] \small $Z_V(\frac{1}{z}, x_1, \cdots, x_r; p_l \leftarrow \frac{\partial}{\partial p_l}, l \geq 1 ) = \prod_{i=1}^r Z_{\overline{H}_*(C(\bullet, \rbb^{m_i}), \qbb)}(\frac{1}{z}, x_i; p_l \leftarrow \frac{\partial}{\partial p_l}, l \geq 1 )$ \normalsize by Lemma~\ref{z_otimes} and the fact that $V = \widehat{\bigotimes}_{i=1}^r \overline{H}_*(C(\bullet, \rbb^{m_i}), \qbb)$, 
\end{enumerate}
and since 
\[ 
Z_{\overline{H}_*(C(\bullet, \rbb^{m_i}), \qbb)}(\frac{1}{z}, x_i; p_l \leftarrow \frac{\partial}{\partial p_l}, l \geq 1 ) = \prod_{l=1}^{+\infty} \left(1+(-\frac{1}{z})^l \frac{\partial}{\partial p_l} \right)^{(-1)^{m_i}E_l(\frac{x_i}{(-z)^{m_i-1}})} , 
\]
by \cite[Proposition  6.5]{aro_tur13},  it follows that  $\Psi_{\vec{m}, d}(x_1, \cdots, x_r, u, z)=$
 \begin{align*} 
  & \left. \left\{ \prod_{i=1}^r \prod_{l=1}^{+\infty} \left(1+(-\frac{1}{z})^l \frac{\partial}{\partial p_l} \right)^{(-1)^{m_i}E_l(\frac{x_i}{(-z)^{m_i-1}})} Y_l\right\} \right|_{p_l=0} = & & \\
  & \left. \left\{ \prod_{l=1}^{+\infty} \left(1+(-\frac{1}{z})^l \frac{\partial}{\partial p_l} \right)^{\sum_{i=1}^r(-1)^{m_i}E_l(\frac{x_i}{(-z)^{m_i-1}})} Y_l\right\} \right|_{p_l =0},
\end{align*}
where 
$
Y_l = e^{-p_l} \left(1+(-1)^dl((-z)^{d-1}u)^lp_l \right)^{(-1)^d E_l(\frac{1}{(-z)^{d-1}u})} . 
$
Notice that the $l$-th factor uses $p_l$ and not any other $p_i$, $i\neq l$, which is then taken to be zero. For this reason in the formula below we replace each $p_l$ by $a$. By looking at 
(\ref{Euler_char_defn}), and the definition of $\Psi_{\vec{m}, d}(x_1, \cdots, x_r, u, z)$,  we have the equality 
\[F^H_{\vec{m}, d}(x_1, \cdots, x_r, u) = \Psi_{\vec{m}, d}(x_1, \cdots, x_r, u, -1), \] 
and this implies
\begin{align*} 
F^H_{\vec{m}, d}(x_1, \cdots, x_r, u) = &  \left. \left\{\prod_{l=1}^{+\infty} \left(1+\frac{\partial}{\partial a} \right)^{-\sum_{i=1}^r (-1)^{m_i-1} E_l(x_i)} Z_l\right\} \right|_{a=0}\\
 = & \prod_{l = 1}^{+\infty} \frac{\Gamma\left((-1)^{d-1}E_l(\frac{1}{u}) - \sum_{i=1}^r (-1)^{m_i-1}E_l(x_i)\right)}{\left((-1)^{d-1}lu^l\right)^{\sum_{i=1}^r (-1)^{m_i-1}E_l(x_i)} \Gamma\left((-1)^{d-1} E_l(\frac{1}{u})\right)},
\end{align*}
where 
$
Z_l = e^{-a}\left(1+(-1)^dlu^la \right)^{(-1)^dE_l(\frac{1}{u})}. 
$
The last equality follows from the identity
\[
\left.\left(1+\frac{\partial}{\partial a} \right)^{-X} e^{-a}\left(1+(-1)^dlu^la \right)^{(-1)^dE_l(\frac{1}{u})}\right|_{a=0}=
\frac{\Gamma\left((-1)^{d-1}E_l(\frac{1}{u}) - X\right)}{\left((-1)^{d-1}lu^l\right)^X \Gamma\left((-1)^{d-1} E_l(\frac{1}{u})\right)},
\]
whose proof is identical to that  of \cite[Proposition 15.7]{turchin10}.

We thus obtain the desired result. 
\end{proof}

\subsection{Understanding generating function of the Euler characteristics of the homology splitting}\label{ss:understanding}

The formula~\eqref{gen_function_formula} might appear confusing at first as it is defined in terms of assymptotic expansions that still need to be deciphered. 
Let us first fix some notation. For two variables $x, u$,  let $\Gamma(x,u)$ denote the asymptotic expansion of $\frac{\Gamma(\frac{1}{u}-x)}{u^x \Gamma(\frac{1}{u})}$ when $u$ goes to $+0$. For instance, in the case $x=n$ is a positive integer, one has 
\begin{equation} \label{gamma_nu}
\Gamma(n, u) = \frac{1}{(1-u)(1-2u) \cdots (1-nu)}.
\end{equation}
In the case $x$ is a negative integer: $x=-n$, one has
\begin{equation}\label{gamma_nu_neg}
\Gamma(-n,u) = (1+u)(1+2u)\cdots (1+(n-1)u).
\end{equation}
(In particular $\Gamma(-1,u)=\Gamma(0,u)=1$.) 
The series $\Gamma(x,u)$ can be seen as a generating function in $u$ of a sequence of polynomials in $x$, 
see \cite[Subsection 14.1]{turchin10}. It has rather bad convergency properties: for any $x\in \cbb \setminus \zbb$ its radius of convergence in $u$ is zero~\cite[Proposition~14.5]{turchin10}. 
%
%
%

%


\begin{prop}\label{pr:gen_function_homol}
\begin{equation}\label{eq:gen_func_homol}
F^H_{\vec{m}, d}(x_1, \cdots, x_r, u)  = \frac{\prod_{l \geq 1} \Gamma \left(\sum_{i=1}^r (-1)^{m_i-1} E_l(x_i), (-1)^{d-1} \frac{lu^l}{F_l(u)} \right)}{\prod_{l \geq 1} \left(F_l(u) \right)^{\sum_{i=1}^r (-1)^{m_i-1} E_l(x_i)}}, 
\end{equation}
where polynomials $F_l(u)$ are defined in (\ref{eq:F_l}).
\end{prop}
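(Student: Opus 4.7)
The plan is to recognize Proposition~\ref{pr:gen_function_homol} as a purely algebraic rewriting of the formula \eqref{gen_function_formula} from Theorem~\ref{gen_function_thm}. The key identity is the defining relation $F_l(u) = l u^l \, E_l(1/u)$ from \eqref{eq:F_l}, which I would use to replace $E_l(1/u)$ by $F_l(u)/(l u^l)$ everywhere in \eqref{gen_function_formula}. To compress notation I would set $X_l := \sum_{i=1}^r (-1)^{m_i-1} E_l(x_i)$ and $\epsilon := (-1)^{d-1}$, so the $l$-th factor on the right-hand side of \eqref{gen_function_formula} reads
\[
\frac{\Gamma(\epsilon E_l(1/u) - X_l)}{(\epsilon l u^l)^{X_l}\,\Gamma(\epsilon E_l(1/u))}.
\]

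Next I would compare this to the definition of the two-variable $\Gamma(x,u)$ given in Subsection~\ref{ss:understanding}, namely $\Gamma(x,u) = \Gamma(1/u - x)/(u^{x}\Gamma(1/u))$ understood as an asymptotic expansion for $u \to +0$. Setting $v_l := \epsilon \, l u^l / F_l(u)$, one has $1/v_l = \epsilon E_l(1/u)$ (using $\epsilon^2=1$), and therefore
\[
\Gamma(X_l, v_l) \;=\; \frac{\Gamma(\epsilon E_l(1/u) - X_l)}{v_l^{X_l}\,\Gamma(\epsilon E_l(1/u))} \;=\; \frac{F_l(u)^{X_l}\,\Gamma(\epsilon E_l(1/u) - X_l)}{(\epsilon l u^l)^{X_l}\,\Gamma(\epsilon E_l(1/u))}.
\]
Rearranging and taking the product over $l\geq 1$ yields exactly the claimed formula \eqref{eq:gen_func_homol}.

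The only non-bookkeeping step is justifying that the asymptotic regimes match so that the equality between asymptotic expansions makes sense. Theorem~\ref{gen_function_thm} stipulates that each factor be read as an asymptotic expansion when $(-1)^{d-1}u^l \to +0$, whereas $\Gamma(x,u)$ on the right-hand side is expanded as $u \to +0$ in its second argument. Here the relevant second argument is $v_l = \epsilon l u^l/F_l(u)$, and since $F_l(0)=1$ we have $v_l \sim \epsilon l u^l$, so the condition $v_l \to +0$ is equivalent to $(-1)^{d-1}u^l \to +0$. Consequently the two asymptotic expansions coincide term by term in $u$, which completes the proof. I do not expect any real obstacle beyond keeping careful track of signs and of the substitution $E_l(1/u) = F_l(u)/(lu^l)$; the rest is formal.
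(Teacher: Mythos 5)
Your proposal is correct and is essentially the paper's own argument: the paper likewise deduces the proposition from Theorem~\ref{gen_function_thm} via the identity $\frac{\Gamma((-1)^{d-1}E_l(\frac 1u)-X)}{((-1)^{d-1}lu^l)^X\Gamma((-1)^{d-1}E_l(\frac 1u))}=\frac{\Gamma\left(X,(-1)^{d-1}\frac{lu^l}{F_l(u)}\right)}{F_l(u)^X}$, which is exactly your substitution $E_l(1/u)=F_l(u)/(lu^l)$ into the definition of $\Gamma(x,u)$. Your additional remark that $F_l(0)=1$ makes the two asymptotic regimes agree is a welcome explicit check of a point the paper only cites to \cite[Lemma 14.6]{turchin10}.
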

\begin{proof}
This  is  deduced from Theorem~\ref{gen_function_thm} and from the formula
\[
\frac{\Gamma( (-1)^{d-1} E_l(\frac 1u)-X)}{((-1)^{d-1}lu^l)^X \Gamma((-1)^{d-1} E_l(\frac 1u))}=
\frac{\Gamma\left(X,(-1)^{d-1}\frac{lu^l}{F_l(u)}\right)}{F_l(u)^X},
 \]
   see \cite[Lemma 14.6]{turchin10} for similar computations for the case of long knots.  
\end{proof}
%

\subsection{Some special cases of the generating function in homology} \label{some_special_cases_gen_function_homology}

The generating function $F^H_{\vec{m}, d}(x_1, \cdots, x_r, u)$ from (\ref{gen_function_formula}) can be used
 to estimate the ranks of the homology and homotopy groups of $\emb$. However, the formula itself is hard to
 use to estimate the asymptotics of the growth since it has infinitely many rather complicated factors. However, sometimes 
 it is possible to choose  specific values for $x_i$ that would kill all but finitely many factors. This has been done in the case 
 of long knots $r=1$, $m_1=1$ in~\cite{turchin10}, where choosing $x_1=-1$ allows one to prove the exponential growth
 of the ranks of the rational  homology and homotopy groups of the space of long knots. Notice that choosing $x_1=1$ (all $x_i=1$ 
 in the general case) corresponds to forgetting the Hodge splitting. In the case of long knots the latter choice does not prove exponential growth, see~\cite{kom_lambr,turchin10} or computations below, which showed that the Hodge splitting was essentital.

In this subsection we produce similar computations for the spaces of classical string links: all $m_i=1$, $i \in \{1, \cdots, r\}$.
Recall that it has been shown that the homology ranks of $\overline{\mbox{Emb}}_c(\coprod_{i=1}^r \mathbb{R}^1, \rdbb)$ grow exponentially for $r\geq 2$ without using the Hodge splitting (this being obtained as a combination of the main results of \cite{kom_lambr} and \cite{songhaf13}).\footnote{In general the homology ranks of $\emb$ grow exponentially for $r \geq 2$.  One can see that both from Theorem~\ref{genius_zero_thm} and   Theorem~\ref{genius_one_thm}. For $r=1$ and $m_1$ odd one still get the exponential growth \cite{aro_tur12}. But for $r=1$ and $m$ even the question  about  the exponential growth  is open.} The computations that we produce below are rather disappointing: we compare the choice of all $x_i=-1$ versus all $x_i=1$ and see that only for $r\leq 2$ 
the first choice may give a better estimate.\footnote{As we said earlier it does give a better estimate for $r=1$. In case $r=2$ and $d$ odd it also gives a better estimate, but the case of
$r=2$ and $d$ even is more subtle as both generating functions happen to have the same radius of convergence.}
%

%
%
%

\begin{enumerate}

\item[$\bullet$] Assume $d$ odd, and consider the function $F_{\mbox{odd}}(1, u)$ defined by $F_{\mbox{odd}}(1, u) = F^H_{\vec{1}, d}(1, \cdots, 1, u)$.  We want to compute $F_{\mbox{odd}}(1, u)$. By substituting $m_i$ by $1$, and $x_i$ by $1$, $1 \leq i \leq r$, in the formula (\ref{eq:gen_func_homol}), and by noticing that $E_l(1) = \left\{ \begin{array}{ccc} 
                                                                                      1 & \mbox{if} & l =1 \\
																																											0 & \mbox{if} & l \geq 2
                                                                                    \end{array} \right.$, we obtain
\[ 
F_{\mbox{odd}}(1,u) = \Gamma(r,u) =  \frac{1}{(1-u)(1-2u) \cdots (1-ru)},
\]
where the last equality  comes from~\eqref{gamma_nu}. 
 The radius of convergence in this case is $\frac 1r$ which implies exponential growth of rational homology ranks in case $r\geq 2$.

\item[$\bullet$] For $d$ even, similar computations give 
\begin{equation}
F_{\mbox{even}}(1, u) = F^H_{\vec{1},d}(1, \cdots, 1, u) = \frac{1}{(1+u)(1+2u) \cdots (1+ru)}. 
\end{equation}
The radius of convergence is $\frac 1r$.

\item[$\bullet$] Assume $d$ odd, and consider the function $F_{\mbox{odd}}(-1, u)$ defined by $F_{\mbox{odd}}(-1,u) = F^H_{\vec{1}, d}(-1, \cdots, -1,u)$.  By substituting $m_i$ by $1$, and $x_i$ by $-1$, $1 \leq i \leq r$, in the formula (\ref{eq:gen_func_homol}), and by noticing that $E_l(-1) = \left\{ \begin{array}{ccc} 
                                                                                      (-1)^l & \mbox{if} & l \in \{1,2\} \\
																																											0 & \mbox{if} & l \geq 3                         \end{array} \right.$, 
																																											we obtain
\begin{equation} \label{fodd_-1u-0}
 F_{\mbox{odd}}(-1,u) = \Gamma(-r,u)\times \frac{\Gamma\left(r,\frac{2u^2}{F_2(u)}\right)}{F_2(u)^2}.
\end{equation}
Since $F_2(u) = 1-u$ and applying~\eqref{gamma_nu} and~\eqref{gamma_nu_neg}, the formula (\ref{fodd_-1u-0}) produces 
\begin{equation} \label{fodd_-1u}
F_{\mbox{odd}}(-1, u) = \frac{(1+u)(1+2u) \cdots (1+(r-1)u)}{(1-u-2u^2)(1-u-4u^2) \cdots (1-u-2ru^2)}
\end{equation}

It is easy to see that the smallest root (in absolute value) of the denominator of (\ref{fodd_-1u}) is $\frac{-1+\sqrt{1+8r}}{4r}$. Therefore the radius of convergence is  equal to $\frac{-1+\sqrt{1+8r}}{4r}$.  This gives a beter estimate of the homology ranks growth for $r\leq 2$. In case $r\geq 4$, the produced
estimate is weaker than the one obtained from $F_{\mbox{odd}}(1,u)$.

\item[$\bullet$] For $d$ even, similar computations  give 
\begin{equation} \label{feven_-1u}
F_{\mbox{even}}(-1, u) = \frac{(1-u)(1-2u) \cdots (1-(r-1)u)}{(1-u+2u^2)(1-u+4u^2) \cdots (1-u+2ru^2)}
\end{equation}
As before, the radius of convergence of (\ref{feven_-1u})  is  equal to $\left|\frac{1+\sqrt{1-8r}}{4r} \right|=\frac 1{\sqrt{2r}}$.  In particular for $r=1$ this implies exponential growth of the homology ranks. In case $r\geq 3$, the produced estimate is weaker than the one obtained from $F_{\mbox{even}}(1, u)$.

\end{enumerate}

\section{Generating functions of Euler characteristics in homotopy} \label{generating_function_homotopy}

In this section we prove Theorem~\ref{gen_function_htpy_thm}, which computes the generating function of Euler characteristics  of  summands in the homotopical splitting  (\ref{final_splitting_pi}). 
We also compute, using a different complex, the generating function of the ranks of
the summands in~\eqref{final_splitting_pi} of genus zero and one.\footnote{Genus $g$ is defined as the complexity minus total Hodge degree plus one $g=t{-}(s_1{+}\ldots{+}s_r){+}1$.}

\subsection{The generating function for $\qbb \otimes \pi_*(\emb)$} \label{generating_function_homotopy_subsection}

In Section~\ref{generating_function_homology} we have computed the generating function $F^H_{\vec{m}, u}(x_1, \cdots, x_r, u)$ of Euler characteristics of summands in the homological splitting of $H_*(\emb, \qbb)$. 
The aim of this section is to compute  the generating function $F^\pi_{\vec{m}, u}(x_1, \cdots, x_r, u)$
of the Euler characteristics of summands, but now in the homotopy $\qbb \otimes \pi_* (\emb)$. Proposition~\ref{pr:gen_function_homol} will be a key ingredient in our computations. 
  If there is no confusion, the first generating function will be denoted  by $F^{H}(x_1, \cdots, x_r, u)$,
 and the second one just by $F^{\pi}(x_1, \cdots, x_r, u)$.  Recalling the equation (\ref{final_splitting_pi}) from the introduction, for a sequence $\vec{s}, t \geq 0$, let $\mathcal{X}^{\pi}_{\vec{s}, t}$ denote the Euler characteristic of the summand $\rmod (Q^{\vec{m}}_{\vec{s}}, \qbb \otimes \widehat{\pi}_{t(d-2)+1} (C(\bullet, \rdbb)))$. We will also write $\xvecs$ for $\prod_{i=1}^r x_i^{s_i}$.  The generating function we look at here is defined by
\begin{equation}  \label{gen_function_htpy_defn}
F^{\pi}_{\vec{m}, d}(x_1, \cdots, x_r, u) = \sum_{\vec{s}, t} \mathcal{X}^{\pi}_{\vec{s}, t} \xvecs u^t.
\end{equation}

 In order to compute $F^{\pi}_{\vec{m}, d}(x_1, \cdots, x_r, u)$,  we need the following lemma, which connects $F^H(x_1, \cdots, x_r, u)$ and $F^{\pi}(x_1, \cdots, x_r, u)$. 

\begin{lem} \label{gen_function_htpy-lemma}
We have 
\begin{equation} \label{gen_function_htpy-formula}
F^{\pi}_{\vec{m}, d}(x_1, \cdots, x_r, u) = \sum_{l =1}^{+\infty} \frac{\mu(l)}{l} \ln (F^H (x_i \leftarrow x_i^l, u \leftarrow u^l)).
\end{equation}
\end{lem}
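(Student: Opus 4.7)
The plan is to recognize~(\ref{gen_function_htpy-formula}) as the plethystic logarithm of $F^H$ and to prove the equivalent plethystic exponential relation
\begin{equation*}
F^H_{\vec m,d}(x_1,\ldots,x_r,u) \;=\; \exp\!\left(\sum_{n\ge 1}\frac{1}{n}\,F^{\pi}_{\vec m,d}(x_1^n,\ldots,x_r^n,u^n)\right).
\end{equation*}
From this, the lemma is pure bookkeeping: take $\ln$ of both sides, substitute $x_i\mapsto x_i^l$, $u\mapsto u^l$, multiply by $\mu(l)/l$, sum over $l\ge 1$, re-index by $m=ln$, and use the classical identity $\sum_{l\mid m}\mu(l)=\delta_{m,1}$ to collapse the double sum to $F^{\pi}_{\vec m,d}(x_1,\ldots,x_r,u)$.

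To establish the exponential relation I would work at the level of the hairy graph-complexes of~\cite{songhaf_tur16}, rather than directly on the Koszul complexes. The graph-complex computing $H_*(\emb,\qbb)$ decomposes, according to the decomposition of a graph into its connected components, as the free graded-commutative algebra (in the super sense) on the subcomplex of \emph{connected} hairy graphs, and the latter computes $\qbb\otimes \pi_*(\emb)$. The key combinatorial fact is that both the Hodge multi-grading $\vec s$ (counted by hairs of each color) and the complexity $t$ (a first-Betti-number-type invariant) are additive under disjoint union of graphs. Passing to supercharacters, with signs coming from homological degree, and using the standard formula for the super-Euler-characteristic of a free graded-commutative algebra then produces the plethystic exponential above on the nose.

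At the space level, the same identity reflects Milnor--Moore applied to the H-space structure on $\emb$ (given by stacking of string links, which is homotopy commutative in the codimension range we are in): the connected Hopf algebra $H_*(\emb,\qbb)$ is bicommutative, hence isomorphic to the free graded-commutative algebra on its primitives, and the latter coincide rationally with $\pi_*(\emb)$. One would then need to know that the Hodge and complexity splittings (\ref{final_splitting_H}) and (\ref{final_splitting_pi}) are compatible with the Hopf-algebra and primitive structures. The hairy graph-complex route bypasses this issue by making compatibility manifest.

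The principal obstacle is therefore verifying that on the graph-complex side the Hodge multi-grading and the complexity grading are genuinely additive under disjoint union of graphs -- i.e. are symmetric-monoidal gradings -- so that both the symmetric-algebra factorization and the homological sign conventions line up correctly. This reduces to unpacking the grading conventions of~\cite{songhaf_tur16}. Once done, the rest of the proof is the standard plethystic/Möbius calculation, directly parallel to the $r=1$ treatment in~\cite[\S 14--15]{turchin10} and to~\cite[Lemma~3]{wil_zh}.
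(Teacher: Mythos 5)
Your proposal is correct and follows essentially the same route as the paper: your plethystic exponential relation is precisely the logarithm of the product formula $F^{H}=\prod_{\vec{s},t}(1-\xvecs u^t)^{-\mathcal{X}^{\pi}_{\vec{s},t}}$ that the paper imports from \cite[Lemma~16.1]{turchin10} (whose proof is exactly the free graded-commutative algebra / connected-graphs argument you sketch), and the remaining M\"obius-inversion bookkeeping is identical to the paper's computation. The additivity of the Hodge and complexity gradings under disjoint union, which you flag as the main thing to check, does hold and is implicitly what the cited lemma encapsulates.
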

Here $\mu(-)$ is the standard M\"obius function. The notation  $F^H (x_i \leftarrow x_i^l, u \leftarrow u^l)$ means that in the expression $F^H (x_1, \cdots, x_r, u)$ the variable $u$ is replaced by $u^l$, and $x_i$ is replaced by $x_i^l, 1 \leq i \leq r$.

\begin{proof}
The plan of the proof is to compute the right hand side of (\ref{gen_function_htpy-formula}), and compare the obtained result with the right hand side of (\ref{gen_function_htpy_defn}). From \cite[Lemma 16.1]{turchin10}, it is straightforward to get the following
\begin{equation} \label{gen_function_htpy-lemma1}
F^{H}(x_1, \cdots, x_r, u) = \prod_{\vec{s}, t} \frac{1}{(1-\xvecs u^t)^{\mathcal{X}^{\pi}_{\vec{s}, t}}},
\end{equation}
which gives (by replacing $u$ by $u^l$ and $x_i$ by $x_i^l$)
\begin{equation} \label{gen_function_htpy-lemma2}
F^{H}(x_i \leftarrow x_i^l, u \leftarrow u^l) = \prod_{\vec{s}, t} \frac{1}{\left(1-(\xvecs u^t)^l \right)^{\mathcal{X}^{\pi}_{\vec{s}, t}}}.
\end{equation}
Taking now the logarithm of (\ref{gen_function_htpy-lemma2}), and using the well known series expansion $\ln (1-x) = -\sum_{p \geq 1} \frac{x^p}{p}$, we obtain 
\begin{equation} \label{gen_function_htpy-lemma3}
\ln (F^H(x_i \leftarrow x_i^l, u \leftarrow u^l)) = \sum_{\vec{s}, t} \sum_{p \geq 1} \mathcal{X}^{\pi}_{\vec{s}, t} \frac{(\xvecs u^t)^{pl}}{p}.
\end{equation}
By putting (\ref{gen_function_htpy-lemma3}) in the right hand side of (\ref{gen_function_htpy-formula}), we get
\begin{align*}
\sum_{l =1}^{+\infty} \frac{\mu(l)}{l} \ln (F^H (x_i \leftarrow x_i^l, u \leftarrow u^l)) = &  \sum_{l \geq 1} \sum_{\vec{s}, t} \sum_{p \geq 1} \mathcal{X}^{\pi}_{\vec{s}, t} \mu(l) \frac{(\xvecs u^t)^{pl}}{pl} \\
 =& \sum_{\vec{s}, t} \sum_{q \geq 1} \mathcal{X}^{\pi}_{\vec{s}, t} \frac{1}{q} (\xvecs u^t)^q \left(\sum_{l|q} \mu (l) \right).
\end{align*}
Since $\sum_{l|q} \mu (l) = \left\{ \begin{array}{lll}
                                    1 & \mbox{if} & q=1 \\
																		0 & \mbox{if} & q \geq 2
                                    \end{array} \right.$, it follows that 
\begin{align*}																		
\sum_{l =1}^{+\infty} \frac{\mu(l)}{l} \ln (F^H (x_i \leftarrow x_i^l, u \leftarrow u^l)) = &   \sum_{\vec{s}, t}  \mathcal{X}^{\pi}_{\vec{s}, t} \xvecs u^t  = &  F^{\pi} (x_1, \cdots, x_r, u) \ \mbox{ by (\ref{gen_function_htpy_defn})}. 
\end{align*}																	
\end{proof}
The last thing we need is an explicit expansion of $\ln(\Gamma(x,u))$, where $\Gamma(x,u)$ is defined at the beginning 
of Subsection~\ref{ss:understanding}.  
%
%
%


\begin{lem}\label{l:ln_gamma}
$\ln (\Gamma (x, u)) = \sum_{j \geq 1} S_j(x) \frac{u^j}{j}.$
\end{lem}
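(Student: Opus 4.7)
The plan is to match $\ln\Gamma(x,u)$ coefficient-by-coefficient in $u$ with $\sum_{j\geq 1} S_j(x)\frac{u^j}{j}$, by checking the identity at positive-integer values of $x$ and then invoking polynomiality in $x$ of both sides.

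First I would verify the formula on integer values. By~\eqref{gamma_nu}, for a positive integer $n$,
\[
\Gamma(n,u) = \prod_{k=1}^{n}\frac{1}{1-ku},
\]
so taking logarithms and expanding $-\ln(1-ku)=\sum_{j\geq 1}(ku)^j/j$ yields
\[
\ln\Gamma(n,u)=\sum_{j\geq 1}\frac{u^j}{j}\sum_{k=1}^{n}k^j=\sum_{j\geq 1}\frac{S_j(n)}{j}u^j,
\]
since $S_j(n)=1^j+2^j+\cdots+n^j$ is precisely Bernoulli's formula~\eqref{sjx_defn}. This establishes the target identity whenever $x$ is a positive integer, and also shows that it holds at $x=0$ (empty product).

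Next I would upgrade this to arbitrary $x$ by a polynomiality argument. Directly from the defining asymptotic expansion of $\Gamma(x,u)=\Gamma(1/u-x)/(u^x\Gamma(1/u))$, the shift $\Gamma(1/u-x-1)=\Gamma(1/u-x)/(1/u-x-1)$ of the ordinary gamma function translates into the functional equation
\[
\Gamma(x+1,u)=\frac{\Gamma(x,u)}{1-(x+1)u}
\]
at the level of formal power series in $u$. Together with the initial condition $\Gamma(0,u)=1$, this recursion (expanded order by order) exhibits the coefficient of $u^j$ in $\Gamma(x,u)$ as a polynomial in $x$. Writing $\ln\Gamma(x,u)=\sum_{j\geq 1}Q_j(x)u^j$, the coefficient $Q_j(x)$ is a fixed polynomial combination of the first $j$ coefficients of $\Gamma(x,u)$, hence itself a polynomial in $x$. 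Since both $Q_j$ and $S_j/j$ are polynomials agreeing at all nonnegative integers, they agree identically, which is precisely the desired identity.

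The only real obstacle is this last polynomiality bookkeeping --- ensuring that polynomial dependence on $x$ survives the formal logarithm; once that is in place the proof reduces to the direct product-log computation above combined with Bernoulli's classical identification $\sum_{k=1}^{n}k^j=S_j(n)$. One could alternatively verify the differential recurrence $Q_j(x+1)-Q_j(x)=(x+1)^j/j$ from the functional equation and compare it to $S_j(x+1)-S_j(x)=(x+1)^j$, but the polynomial interpolation route is the most economical.
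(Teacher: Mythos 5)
Your first step is exactly the paper's: specialize to $x=n$ a positive integer, use \eqref{gamma_nu} to write $\Gamma(n,u)=\prod_{k=1}^n(1-ku)^{-1}$, expand the logarithms, and recognize $\sum_{k=1}^n k^j=S_j(n)$. That part is correct and is precisely how the paper argues. The paper then finishes by citing the fact that the coefficients of $\ln\Gamma(x,u)$ are polynomials in $x$ (\cite[Lemma~14.2]{turchin10}), so that two polynomials agreeing at all positive integers must coincide.

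The gap is in your justification of that polynomiality. The functional equation $\Gamma(x+1,u)=\Gamma(x,u)/(1-(x+1)u)$ together with $\Gamma(0,u)=1$ does \emph{not} ``exhibit the coefficient of $u^j$ in $\Gamma(x,u)$ as a polynomial in $x$.'' Writing $\Gamma(x,u)=\sum_j c_j(x)u^j$, the recursion only relates $c_j(x)$ to $c_j(x+1)$; it pins down $c_j$ on the nonnegative integers but says nothing about non-integer $x$. For instance, $c_1$ satisfies $c_1(x+1)-c_1(x)=x+1$, whose general solution is $S_1(x)$ plus an arbitrary $1$-periodic function vanishing on $\mathbb{Z}$ --- so a difference equation plus integer values cannot force polynomiality. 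What is actually needed is an analysis of the asymptotic expansion of $\frac{\Gamma(1/u-x)}{u^x\Gamma(1/u)}$ itself (e.g.\ via Stirling's series for $\ln\Gamma(z)$ at $z=1/u-x$, where each coefficient of $u^j$ visibly comes out as a polynomial in $x$); this is the content of the cited \cite[Subsection~14.1]{turchin10}. Once polynomiality of $c_j(x)$ (equivalently of the coefficients $Q_j(x)$ of $\ln\Gamma(x,u)$) is in hand, your interpolation argument closes the proof correctly.
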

\begin{proof}
This identity easily follows from  (\ref{gamma_nu}) and from Bernoulli's summation formula $S_j(n) = 1^j + \cdots + n^j$  when $x=n$ is a positive integer. For other values of $x$, it follows from the fact that $\ln(\Gamma(x, u))$ is a generating function of $u$ of a sequence of polynomials in $x$  \cite[Lemma 14.2]{turchin10}. 
\end{proof}

Now we are ready to prove Theorem~\ref{gen_function_htpy_thm}, which is the main result of this subsection. 

\begin{proof}[Proof of Theorem~\ref{gen_function_htpy_thm}]
The proof is a direct application of Proposition~\ref{pr:gen_function_homol} and Lemmas~\ref{gen_function_htpy-lemma}-\ref{l:ln_gamma}.
\end{proof}



\subsection{The generating function in homotopy for genus $\leq 1$} \label{gen_function_htpy_genus_less1}

In \cite[Subsection 2.1]{songhaf_tur16} we explicitly described a complex $\empi$ of hairy graphs that computes the rational homotopy of the space $\emb$ of high dimensional string links. 
In short, this complex  is obtained in~\cite{songhaf_tur16} by taking an injective resolution of the
target $\Omega$-modules in~\eqref{final_splitting_pi}.  
The graph-complex $\empi$ is a differential graded vector space spanned by so called {\it hairy graphs}, i.e. connected graphs having a finite non-empty set of external vertices (of valence $1$ and called {\it hairs}), a finite set of non-labeled internal vertices (of valence $\geq 3$). The edges are oriented. One allows both multiple edges and
tadpoles.  The external vertices are colored with $\{1, \cdots, r\}$ as the set of colours. As a part of the data,
each graph comes with the ordering of its {\it orientation set} that consists of the following elements:
edges (of degree $d-1$), internal vertices (of degree $-d$), external vertices (of degree $-m_i$ if it is colored
by~$i$). Changing orientation of an edge in a graph produces sign $(-1)^d$. 
Changing the order of the orientation set produces the Koszul sign of permutation that takes into account
the degree of the elements. The differential is defined as the sum of expansion of  internal vertices.  

%

For a hairy graph $G\in\empi$, its homological degree is the sum of the degrees of the elements in its orientation 
set. Its corresponding \emph{Hodge multi-degree}  is the tuple $(\vec{s})$, where $s_i$ is the number of external vertices colored by $i$. Its  \emph{complexity}  is the first Betti number of  the graph obtained from $G$ by gluing together all its external vertices.  For hairy graphs it is also natural to define
 \emph{genus} $g$ as their first Betti number.

%
As an example of the complexity, the graph in Figure~\ref{graph_genus0} is of complexity $2$, while the complexity is three in Figure~\ref{graph_genus1}, and five in Figure~\ref{graph_genus2}. Concerning the genus, intuitively, a graph is of genus $g$ if it contains $g$ \lq\lq loops\rq\rq{}. See Figure~\ref{graph_genus0}, Figure~\ref{graph_genus1} and Figure~\ref{graph_genus2} for some examples of graphs of genus $0$, $1$ and $2$ respectively. 


\begin{figure}[!ht]
\centering
\begin{minipage}[t]{3cm}
\includegraphics[scale=0.5]{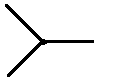}
\caption{Graph of genus $0$} \label{graph_genus0}
\end{minipage}
\begin{minipage}[t]{1cm}
\includegraphics[scale=0.1]{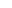}
\end{minipage}
\begin{minipage}[t]{3cm}
\includegraphics[scale=0.5]{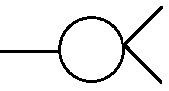}
\caption{Graph of genus $1$} \label{graph_genus1}
\end{minipage}
\begin{minipage}[t]{1cm}
\includegraphics[scale=0.1]{blanc}
\end{minipage}
\begin{minipage}[t]{3cm}
\includegraphics[scale=0.5]{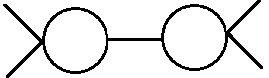}
\caption{Graph of genus $2$} \label{graph_genus2}
\end{minipage}

\end{figure}

The graph-complex $\empi$ can be split  as follows. For $g \geq 0$, let $\mathcal{E}_{\pi g}^{\vec{m}, d}$ denote the subcomplex of $\empi$ generated by graphs of genus $g$. For integers $\vec{s}$, $t$, let $\mathcal{E}_{\pi g, \vec{s}, t}^{\vec{m}, d}$ be the subcomplex of $\mathcal{E}_{\pi g}^{\vec{m}, d}$ generated by graphs of Hodge multi-degree $(\vec{s})$ and  complexity $t$. All these subcomplexes are well defined since the differential in $\empi$ preserves the Hodge multi-degree, the complexity, and the genus. In particular, one has the splitting
\begin{equation} \label{splitting_epi}
\empi=\bigoplus_{g\geq 0}\mathcal{E}_{\pi g}^{\vec{m}, d}.
\end{equation} 
\begin{defn} Let $g \geq 0$.
\begin{enumerate} 
\item[$\bullet$] The \emph{Hodge splitting} of the complex $\mathcal{E}_{\pi g}^{\vec{m}, d}$ is the splitting
\begin{equation} \label{hodge_splitting}
\mathcal{E}_{\pi g}^{\vec{m}, d} = \bigoplus_{\vec{s}, t \geq 0} \mathcal{E}_{\pi g, \vec{s}, t}^{\vec{m}, d}.
\end{equation}
Each term of the right-hand side of (\ref{hodge_splitting}) is a finite dimensional graph complex. 
\item[$\bullet$] The \emph{generating function} associated to (\ref{hodge_splitting}), and denoted $F^{\pi g}_{\vec{m}, d}(x_1, \cdots, x_r, u)$, is defined as 
\begin{equation} \label{gen_function_genus}
F^{\pi g}_{\vec{m}, d}(x_1, \cdots, x_r, u) = \underset{\vec{s}, t \geq 0}{\sum} \mathcal{X}^\pi_{g, \vec{s}, t} \xvecs u^t,
\end{equation}
where $\mathcal{X}^\pi_{g,\vec{s},t}$ is the Euler characteristic of $\mathcal{E}_{\pi g,\vec{s}, t}^{\vec{m}, d}$. 
\end{enumerate}
\end{defn}

It is easy to see the following relation between the genus and the complexity of a graph. 
\begin{equation}\label{eq:compl_gen}
\text{genus $=$ complexity $-$ number of external vertices $+$ 1}.
\end{equation}
Thanks to (\ref{eq:compl_gen}), we can easily express the generating function of the Euler characteristics that takes into account the grading $g$ as well:
\begin{equation}\label{eq:F_pi_genus}
F^\pi(x_1,\ldots,x_r,u,\hbar)=\hbar F^\pi(\frac{x_1}\hbar,\ldots,\frac{x_r}\hbar,u\hbar),
\end{equation}
where $\hbar$ is the variable responsible for the genus, and  the right-hand side  uses the generating function defined earlier~\eqref{gen_function_htpy_defn}. (Abusing notation we denote this function by $F^\pi$ as well). Note that
$
F^{\pi} = \sum_{g \geq 0} F^{\pi g} \hbar^g. 
$
Applying the result from 
Theorem~\ref{gen_function_htpy_thm} one can get an explicit formula for $F^\pi(x_1,\ldots,x_r,u,\hbar)$. However, even though this formula is very simple for explicit computer calculations it\rq{}s not at all obvious that it produces zero for the negative genus. Indeed, there will be summands in which $\hbar$ appears with a negative exponent, that must somehow cancel out with each other.

The aim of this subsection is to compute the generating function from (\ref{gen_function_genus}) with $g=0$ and~$1$ using hairy graph-complexes, which is done by Theorems~\ref{genius_zero_thm} and~\ref{genius_one_thm} below. We need first to define a symmetric sequence of graph-complexes $\{M(P_d^k)\}_{k\geq 1}$ that will be used in our computations.

\subsubsection{Graph-complexes $M(P_d^k)$} \label{graph_complex_mpdk}

Recall that $d$ denotes the dimension of the ambient space. For $k\geq 0$, $M(P_d^k)$ is the complex
of graphs defined essentially in the same way as the hairy graph-complex with the only difference that
its graphs have exactly $k$ external vertices labeled bijectively by $1,\ldots,k$. Also we exclude external vertices from the orientation set of such graphs. Thus the orientation set of any graph $G\in M(P_d^k)$ is the union
of the set $E_G$ of edges of $G$ (each edge being of degree $d-1$) and the set $I_G$ of  its internal vertices
(each internal vertex being of degree $-d$). So that the total degree of $G$ is $(d-1)|E_G|-d|I_G|$. 
The differential on $M(P_D^k)$ is defined in the same way as the sum of expansions of internal vertices. Since the differential  preserves the genus (first Betti number of a graph), each of $M(P_d^k)$ can be split into a direct sum by genus $g$. That is, one has $M(P_d^k)=\bigoplus_{g\geq 0} M_g(P_d^k).$  Let $V$ be the multi-graded $r$-dimensional vector space of colors\footnote{A \textit{color} for us is a component of links in $\emb$}, whose basis is the set   $\{v_1, \ldots, v_r\}$,  with each $v_i$ being of degree $m_i$ and of Hodge multi-degree $(\underbrace{0\ldots 0}_{i-1},1,\underbrace{0\ldots 0}_{r-i})$.
 Consider  the symmetric sequence $V^{\otimes \bullet} = \{V^{\otimes n}\}_{n \geq 0}$. One has an isomorphism
\begin{equation} \label{eq_empi_as_sigma_map-2}
\empi \cong \mbox{hom}_{\Sigma}(V^{\otimes \bullet}, M(P_d^{\bullet})). 
\end{equation}

This implies 
\begin{equation}\label{eq_empi_as_sigma_map_g}
\mathcal{E}_{\pi g}^{\vec{m}, d}\cong \mbox{hom}_\Sigma(V^{\otimes \bullet}, M_g(P_d^\bullet)).
\end{equation}

Therefore the homology of each summand $\mathcal{E}_{\pi g}^{\vec{m}, d}$ in (\ref{splitting_epi}) is completely determined by the symmetric sequence of the homology of $M_g(P_d^\bullet)$. It turns out that for $g=0$ and~$1$, the corresponding symmetric sequences can be easily described. Thus the plan to get Theorem~\ref{genius_zero_thm} and Theorem~\ref{genius_one_thm} will be to compute the cycle index sums    
 $Z_{H_*M_0(P_d^\bullet)}$, $Z_{H_*M_1(P_d^\bullet)}$ first, and then apply~\eqref{observation_proof_genius0}. The latter equation is a general formula that we obtain in the next section by first computing the cycle index sum of $V^{\otimes\bullet}$~\eqref{zotimes_formula} and then applying~\eqref{dim_hom_formula}. We mention also at this point that for every given $k$ both groups $H_*M_0(P_d^k)$ and
$H_*M_1(P_d^k)$ are concentrated in a single homological degree. Thus computation of the homology ranks or of Euler characteristics carry essentially the same information.

\subsubsection{Genus zero} \label{graph_complexes_genius_zero}

By~\eqref{eq_empi_as_sigma_map_g} the complex $\mathcal{E}_{\pi 0}$ of colored hairy graphs of genus zero has the form (take $g=0$)
\begin{equation}\label{eq_empi_as_sigma_map_0}
\mathcal{E}_{\pi 0}\cong \mbox{hom}_\Sigma(V^{\otimes \bullet}, M_0(P_d^\bullet)). 
\end{equation}
On the other hand the  complexes $M_0(P_d^\bullet)=\{M_0(P_d^k)\}_{k\geq 1}$ are complexes of trees, whose homology is well known
to be isomorphic up to a shift of degrees to the components ${\mathcal L}ie((k)):={\mathcal L}ie(k-1)$ of the cyclic Lie operad~\cite{white}. More precisely one has an isomorphism of $\Sigma_k$-modules
\begin{equation}\label{eq:H_tree=Lie}
H_*(M_0(P_d^k))\cong_{\Sigma_k}\Sigma^{k(d-2)-d+3}{{\mathcal L}ie}((k))\otimes (sign)^{\otimes d}.
\end{equation}
The homology is concentrated in the smallest possible degree of 
\textit{uni-trivalent trees}.\footnote{A tree is called \textit{uni-trivalent} if all its internal vertices are trivalent} Such a tree with $k$ external vertices must have $k-2$ internal vertices and $2k-3$ edges. Thus its degree is $(2k-3)(d-1)-(k-2)d=k(d-2)-d+3$. Our  hairy graph-complex specialized to the gradings when $d=3$ and all $m_i=1$, contains in the bottom degree homology the space of unitrivalent graphs modulo $AS$ and $IHX$ relations, which encodes the finite type invariants of string links in $\mathbb{R}^3$~\cite{barnatan95}. The tree-part of this space is well studied~\cite{habeg_masb00,
levine02,habeg_pitsch03,moskovich06}. We failed to find in the literature formulas similar  to those given by Theorems~\ref{genius_zero_thm_h}-\ref{genius_zero_thm}, but they could be easily derived from the results of aforementioned papers and are known to specialists.

 Let $\mathbf{1}(\bullet)$ be the symmetric sequence defined by $\mathbf{1}(1) = \mathbb{Q}$, and $\mathbf{1}(n) =0$ for all $n \neq 1$. One has
\[{{\mathcal L}ie}((n)) \cong_{\Sigma_n} \mbox{Ind}^{\Sigma_n}_{\Sigma_1 \times \Sigma_{n-1}} (\mathbf{1}(1) \otimes {{\mathcal L}ie}(n-1)) 
- {{\mathcal L}ie} (n), \, \, n\geq 2.\]
Rationally this was proved in~\cite{levine02,habeg_pitsch03} and integrally in~\cite{con_schn_teich12}. Therefore $\lie \cong_{\Sigma} \mathbf{1} (\bullet) \widehat{\otimes} {{\mathcal L}ie}(\bullet) - {{\mathcal L}ie}(\bullet)+\mathbf{1} (\bullet)$.
 We add $\mathbf{1} (\bullet)$ to compensate subtraction of ${{\mathcal L}ie}(1)$, i.e. to have zero in arity one. This implies that (since  $Z_{\mathbf{1}(\bullet)} (p_1, p_2, \cdots) = p_1$) 
\[Z_{\lie}  = (p_1-1) Z_{{{\mathcal L}ie}(\bullet)}+p_1. \]
 Using now the following well known result (see for example~\cite{brandt44}; another short and elegant proof is given in~\cite[Section 5.1]{dots_khoro06})
\[Z_{{{\mathcal L}ie}(\bullet)}(p_1, p_2, \cdots) =  \sum_{l=1}^{+\infty} \frac{-\mu (l) \ln (1-p_l)}{l},\]
we get 
\begin{equation} \label{zlie_formula}
Z_{\lie} (p_1, p_2, \cdots) = (1-p_1) \sum_{l=1}^{+\infty} \frac{ \mu (l) \ln (1-p_l)}{l}+p_1.
\end{equation}

From \eqref{eq:H_tree=Lie} and the fact that for genus zero graphs the complexity is the number of external vertices minus one, we get
\begin{equation}\label{eq:Z_M_0}
Z_{H_*M_0(P_d^\bullet)}(u,z;p_1,p_2,\ldots)=\frac{1}{z^{d-3}u}Z_{\lie}\left(p_l\leftarrow (-1)^{(l-1)d}(z^{d-2}u)^lp_l,\, l\in\nbb\right).
\end{equation}

To recall, the \textit{Hodge splitting} is defined in (\ref{hodge_splitting}).  
\begin{thm} \label{genius_zero_thm_h}
The generating function of the dimensions of the Hodge summands of the complex 
$\mathcal{E}_{\pi 0}^{\vec{m},d}$ of hairy graphs of genus zero is
\begin{align*}
 & R^{\pi 0}_{\vec{m}, d}(x_1, \cdots, x_r, z,u)  \\ 
& = z\alpha_1(\frac 1z) +\frac{ 1-z^{d-2}u\alpha_1(\frac 1z)}{z^{d-3}u}\sum_{l=1}^{+\infty} \frac{\mu(l)\ln\left(
1-(-1)^{(l-1)d}(z^{d-2}u)^l\alpha_l(\frac 1z)\right)}{l},  
\end{align*}
where $\alpha_l(\frac 1z) = \sum_{i=1}^r (-1)^{m_i(l-1)} x_i^l (\frac 1z)^{m_il}$.
\end{thm}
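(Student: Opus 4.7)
The plan is to exploit the identification $\mathcal{E}_{\pi 0}^{\vec{m},d} \cong \mathrm{hom}_\Sigma(V^{\otimes\bullet}, M_0(P_d^\bullet))$ from \eqref{eq_empi_as_sigma_map_0} and reduce the computation to a substitution into the cycle index sum $Z_{H_*M_0(P_d^\bullet)}$ already expressed by \eqref{eq:Z_M_0} and \eqref{zlie_formula}. The main tool is Lemma~\ref{dim_hom}, which for a source $V^{\otimes\bullet}$ and target $H_*M_0(P_d^\bullet)$ yields
\begin{equation*}
R^{\pi 0}_{\vec{m}, d}(x_1, \ldots, x_r, z, u) = \left. \left\{ Z_{V^{\otimes\bullet}}\!\left(\tfrac{1}{z}; p_l \leftarrow l\tfrac{\partial}{\partial p_l}\right) Z_{H_*M_0(P_d^\bullet)}(u, z; p_1, p_2, \ldots) \right\} \right|_{p_l=0}.
\end{equation*}

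The first step is a direct computation of $Z_{V^{\otimes\bullet}}$. Since $V$ has basis $v_1,\ldots,v_r$ with $v_i$ of degree $m_i$ and Hodge multi-degree the $i$-th unit vector, a permutation $\sigma\in\Sigma_n$ with cycle type $(1^{j_1}2^{j_2}\cdots)$ acts on $V^{\otimes n}$ with graded trace $\prod_l \alpha_l(z)^{j_l}$, where the Koszul sign $(-1)^{m_i(l-1)}$ arising from cycling $l$ tensor factors of degree $m_i$ is precisely what appears in $\alpha_l$. Averaging with the usual symmetric-group weights gives
\begin{equation*}
Z_{V^{\otimes\bullet}}(z; p_1, p_2, \ldots) = \exp\!\left(\sum_{l\geq 1}\frac{p_l\,\alpha_l(z)}{l}\right),
\end{equation*}
so after the substitution $z\leftarrow\tfrac 1 z$ required by Lemma~\ref{dim_hom} we get the operator $\exp\!\bigl(\sum_l \alpha_l(\tfrac1z)\,\partial/\partial p_l\bigr)$.

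The second step uses the standard translation identity $e^{\sum_l a_l\,\partial/\partial p_l} f(p_1,p_2,\ldots)\big|_{p_l=0} = f(a_1,a_2,\ldots)$, which collapses the operator application to a plain substitution. Combined with \eqref{eq:Z_M_0} this yields
\begin{equation*}
R^{\pi 0}_{\vec{m},d} = \frac{1}{z^{d-3}u}\, Z_{\lie}\!\left(p_l \leftarrow (-1)^{(l-1)d}(z^{d-2}u)^l\,\alpha_l(\tfrac 1 z)\right).
\end{equation*}
Plugging this substitution into the explicit formula \eqref{zlie_formula} for $Z_{\lie}$, whose only $p_1$-dependent terms isolate cleanly as $(1-p_1)(\cdots)+p_1$, then distributing the prefactor $\tfrac{1}{z^{d-3}u}$, produces exactly the claimed expression $z\alpha_1(\tfrac 1z)+\tfrac{1-z^{d-2}u\alpha_1(\tfrac 1z)}{z^{d-3}u}\sum_l\tfrac{\mu(l)}{l}\ln(\cdots)$.

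The one place requiring care is the signs: verifying that the Koszul sign from cycling $l$ copies of $v_i$ (a factor of degree $m_i$) contributes $(-1)^{m_i^2(l-1)}=(-1)^{m_i(l-1)}$, and keeping track of the sign $(-1)^{(l-1)d}$ coming from the shift and sign-twist in \eqref{eq:H_tree=Lie} which propagates to \eqref{eq:Z_M_0}. Once these parities are correctly matched, the rest of the proof is a transparent substitution and the only other input needed is the well-known cycle index sum of the cyclic Lie operad feeding \eqref{zlie_formula}.
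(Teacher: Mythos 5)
Your proof is correct and follows the same route as the paper: identify $\mathcal{E}_{\pi 0}^{\vec{m},d}$ with $\mathrm{hom}_\Sigma(V^{\otimes\bullet},M_0(P_d^\bullet))$, reduce via the cycle-index substitution \eqref{observation_proof_genius0} (which you rederive from $Z_{V^{\otimes\bullet}}$ and Lemma~\ref{dim_hom}, exactly as the paper does), and then plug \eqref{eq:Z_M_0} into \eqref{zlie_formula}. The only difference is that you spell out the derivation of \eqref{observation_proof_genius0} inline, whereas the paper cites it; the substitutions and sign bookkeeping match.
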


  \begin{proof}
  This formula is obtained from~\eqref{zlie_formula} by change of variables: first using~\eqref{eq:Z_M_0} and then~\eqref{observation_proof_genius0}.
  \end{proof}

  \begin{thm} \label{genius_zero_thm}
The generating function of the Euler characteristics of the Hodge summands of the complex 
$\mathcal{E}_{\pi 0}^{\vec{m},d}$ of hairy graphs of genus zero is
\begin{multline*}
  F^{\pi 0}_{\vec{m},d}(x_1, \cdots, x_r, u)   =   \\ 
    -\sum_{i=1}^r(-1)^{m_i}x_i +\left(\sum_{i=1}^r(-1)^{m_i}x_i - \frac {(-1)^d}{u}\right)\sum_{l=1}^{+\infty} \frac{\mu(l)\ln\left(
1-(-1)^{d}u^l\sum_{i=1}^r(-1)^{m_i}x_i^l\right)}{l}.   
\end{multline*}
\end{thm}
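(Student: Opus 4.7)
The plan is to deduce Theorem~\ref{genius_zero_thm} from Theorem~\ref{genius_zero_thm_h} by specializing the homological-degree variable $z$ to $-1$. In $R^{\pi 0}_{\vec{m},d}(x_1,\ldots,x_r,z,u)$, the variable $z$ tracks the homological grading of the complex $\mathcal{E}_{\pi 0}^{\vec{m},d}$: each color $v_i$ of degree $m_i$ in $V$ contributes $z^{-m_i}$ via the hom complex $\mathrm{hom}_\Sigma(V^{\otimes \bullet},M_0(P_d^\bullet))$, while the edges of $M_0(P_d^\bullet)$ contribute positive powers of $z$. Since each Hodge summand $\mathcal{E}_{\pi 0, \vec{s}, t}^{\vec{m}, d}$ is a finite-dimensional chain complex, setting $z=-1$ converts dimensions into Euler characteristics, so
$F^{\pi 0}_{\vec{m},d}(x_1,\ldots,x_r,u) = R^{\pi 0}_{\vec{m},d}(x_1,\ldots,x_r,-1,u)$, and the proof reduces to substitution and simplification.

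First I would simplify $\alpha_l(1/z)|_{z=-1}$. Using $(-1)^{m_i(l-1)}\cdot(-1)^{-m_i l} = (-1)^{m_i(2l-1)} = (-1)^{m_i}$, one obtains $\alpha_l(-1) = \sum_{i=1}^r (-1)^{m_i} x_i^l$. Then I would evaluate each of the three building blocks of the formula in Theorem~\ref{genius_zero_thm_h}: (i) the constant term $z\alpha_1(1/z)|_{z=-1} = \sum_i (-1)^{m_i-1} x_i = -\sum_i (-1)^{m_i} x_i$; (ii) the prefactor of the sum, which after $(-1)^{d-2}=(-1)^d$ and $(-1)^{d-3}=-(-1)^d$ rearranges to $\sum_i (-1)^{m_i} x_i - (-1)^d/u$; and (iii) the argument of the logarithm, where the combined sign $(-1)^{(l-1)d}\cdot(-1)^{l(d-2)} = (-1)^{2ld - d - 2l} = (-1)^d$ together with $\alpha_l(-1)$ yields $1 - (-1)^d u^l \sum_i (-1)^{m_i} x_i^l$.

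Plugging (i), (ii), (iii) into the formula of Theorem~\ref{genius_zero_thm_h} gives exactly the expression claimed in Theorem~\ref{genius_zero_thm}. The proof is essentially a bookkeeping exercise with signs; the only place where care is required is the prefactor in (ii), where the minus sign arising from $(-1)^{d-3}$ in the denominator must be correctly distributed across both summands of the numerator. Beyond that, no serious obstacle arises.
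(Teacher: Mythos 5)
Your proposal is correct and matches the paper's own proof, which is exactly "take $z=-1$ in Theorem~\ref{genius_zero_thm_h}"; your sign bookkeeping for $\alpha_l(-1)$, the prefactor, and the logarithm argument all check out. The only addition you make is to spell out why $z=-1$ turns graded dimensions into Euler characteristics, which the paper leaves implicit.
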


\begin{proof}
Take $z=-1$ in the previous theorem.
\end{proof}

\subsubsection{Genus one}

In the previous section we have computed the generating function for graph-complexes of genus zero. Here we will make computations in the genus one case. 
%
Figure~\ref{graph1_genius1} and Figure~\ref{graph2_genius1} are examples of graphs of genus one.

\begin{figure}[!ht]
\centering
\begin{minipage}[t]{3cm}
\includegraphics[scale=0.5]{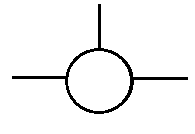}
\caption{A graph of genus one} \label{graph1_genius1}
\end{minipage}
\begin{minipage}[t]{1cm}
\includegraphics[scale=0.3]{blanc}
\end{minipage}
\begin{minipage}[t]{3cm}
\includegraphics[scale=0.5]{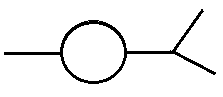}
\caption{Another graph of genus one} \label{graph2_genius1}
\end{minipage}
\begin{minipage}[t]{1cm}
\includegraphics[scale=0.3]{blanc}
\end{minipage}
\begin{minipage}[t]{3cm}
\includegraphics[scale = 0.5]{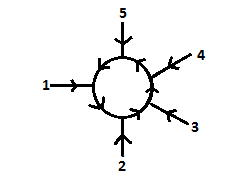}
\caption{A graph in $H_*M_1(P_d^5)$} \label{graph3_genius1}
\end{minipage}
\end{figure}

%

The first goal will be to understand the $\Sigma_n$ acton on  $H_*M_1(P_d^n)$. 
Notice that in this homology we can consider only hedgehogs: graphs whose external vertices are directly connected to the loop. 
This is because the other graphs are killed by the differential in homology (see \cite{con_cos_tur_weed13}). 
A typical graph (or generator) in $H_*M_1(P_d^n)$ is  the one of Figure~\ref{graph3_genius1}.

%

A hedgehog with $n$ external vertices has $2n$ edges and $n$ internal vertices, thus the homology $H_*M_1(P_d^n)$ is concentrated 
in the only degree $2n(d-1)-nd=n(d-2)$. Let $D_n$ denote the dihedral group of symmetries of the unit circle with $n$ points marked 
$e^{\frac{2k\pi i}n}$, $k=0\ldots n-1$. One has an obvious homomorphism $D_n\to \Sigma_n$ corresponding to the permutation of the marked points. This map is  an inclusion for $n\geq 3$. The hedgehog  whose external vertices are marked in the cyclic order $1,\ldots,n$ is sent to itself times certain sign when acted on by  elements of $D_n$. Denote by $\lambda_n$ the character of $D_n$ corresponding to this sign. (This character depends on $d$ as well.) As a $\Sigma_n$ module in graded vector spaces 
$H_*M_1(P_d^n)$ is the $n(d-2)$-suspended induced representation
\begin{equation}\label{eq_hom_gen1_sigma}
H_*M_1(P_d^n)\cong_{\Sigma_n} \Sigma^{n(d-2)}\mathrm{Ind}^{\Sigma_n}_{D_n}\lambda_n.
\end{equation}
Let us describe the character $\lambda_n$. Since $D_n$ is generated by two elements: a $\frac{2\pi}n$ rotation and a reflection, it is enough to compute $\lambda_n$ on those elements. For an element $\sigma \in D_n$, we will explicitly determine the sign 
$\lambda_n(\sigma)$ (using the Koszul sign of permutation taking into account the degrees of the elements in the orientation set) that appears in the equality $\sigma \cdot G = \lambda_n(\sigma)  G$. Here $G$ stands for the  hedgehog  whose external vertices are marked in the cyclic order $1,\ldots,n$ (Figure~\ref{graph3_genius1} is an example of a hedgehog with $5$ external vertices.) To recall the orientation set is the union of edges, of degree $d-1$, and internal vertices, of degree $-d$. 
  There are three possibilities:

\begin{figure}[!ht]
\centering
\begin{minipage}[t]{3cm}
\includegraphics[scale=0.5]{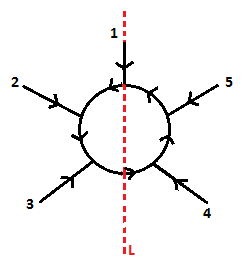}
\caption{A reflection with respect to $L$ (n=5)} \label{reflection_nodd}
\end{minipage}
\begin{minipage}[t]{3cm}
\includegraphics[scale=0.2]{blanc}
\end{minipage}
\begin{minipage}[t]{3cm}
\includegraphics[scale=0.5]{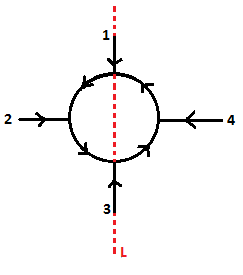}
\caption{A reflection with respect to $L$ (n=4)} \label{reflection_neven1}
\end{minipage}
\end{figure}
%

\begin{enumerate}
\item[-] If $\sigma$ is a rotation by $\frac{2\pi}{n}$, then one can easily see that $\sigma \cdot G = (-1)^{d(n-1)} G$;
\item[-] If $n$ is odd and  $\sigma$ is a reflection whose one vertex is fixed (see Figure~\ref{reflection_nodd}), then $\sigma 
\cdot G = (-1)^{\frac{n+1}{2}d}G$;  
\item[-] If $n$ is even and $\sigma$ is a reflection whose two vertices are fixed (see Figure~\ref{reflection_neven1}), then one has $\sigma \cdot G = (-1)^{\frac{nd}{2}-1} G$. 
\end{enumerate}

%

In order to simplify computations, we will write $\lambda_n$ in another form. Let $\mbox{sign} \colon D_n  \lra \{-1,1\}$ be the signature representation restricted from $\Sigma_n$, and let $\mbox{or} \colon D_n \lra \{-1,1\}$ be the orientation representation. Notice that the latter representation concerns only reflections in $D_n$, that is, \lq\lq{}$\mbox{or}$\rq\rq{}
 is equal to $-1$ on reflections, and to $1$ on rotations. From our computations we  obtain 
\begin{equation} \label{sign_representation2}
\lambda_n = \mbox{sign}^{\otimes d} \otimes \mbox{or}^{\otimes (n+d+1)}.
\end{equation}

We will  need the following well known fact~\cite{feit67}. To recall the cycle index sum is defined in Definition~\ref{cycle_index_sum_defn}. 

\begin{lem}\label{l_induction}
Let $f\colon H\to \Sigma_n$ be a group homomorphism and let $\rho^V\colon H\to GL(V)$ be a representation of $H$, then the cycle index sum of the induced representation can be expressed as
\begin{equation}\label{eq:eq_induction}
Z_{\mathrm{Ind}^{\Sigma_n}_HV}(p_1,p_2,\dots)=\frac 1{|H|}\sum_{h\in H} \mathrm{tr}(\rho^V(h))\prod_l p_l^{j_l(f(h))}.
\end{equation}
\end{lem}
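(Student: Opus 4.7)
The plan is to apply the character formula for induced representations and then perform a change of summation variables. Recall that for any $\Sigma_n$-representation $W$ the cycle index sum can be rewritten as
\[
Z_W(p_1,p_2,\ldots) = \frac{1}{n!}\sum_{\sigma \in \Sigma_n} \mathrm{tr}(\rho^W(\sigma)) \prod_l p_l^{j_l(\sigma)},
\]
so everything will reduce to computing the character of $\mathrm{Ind}^{\Sigma_n}_H V := \mathbb{Q}[\Sigma_n] \otimes_{\mathbb{Q}[H]} V$, where $\mathbb{Q}[H]$ acts on $\mathbb{Q}[\Sigma_n]$ on the right via the homomorphism $f$.

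First I would establish the character formula
\[
\chi_{\mathrm{Ind}^{\Sigma_n}_H V}(\sigma) = \frac{1}{|H|} \sum_{\substack{(y,h) \in \Sigma_n \times H \\ y\, f(h)\, y^{-1} = \sigma}} \chi_V(h).
\]
When $f$ is injective this is the classical Frobenius formula. For a general homomorphism one factors $f$ as $H \twoheadrightarrow f(H) \hookrightarrow \Sigma_n$, decomposes $\mathbb{Q}[\Sigma_n]$ as a right $\mathbb{Q}[H]$-module into orbits $y_i f(H) \cong \mathbb{Q}[H/\ker f]$, and identifies $\mathbb{Q}[H/\ker f] \otimes_{\mathbb{Q}[H]} V$ with the coinvariants $V_{\ker f}$. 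Computing the trace of $\sigma$ on $\mathrm{Ind}^{\Sigma_n}_H V \cong \bigoplus_i V_{\ker f}$ picks up only coset representatives with $y_i^{-1}\sigma y_i \in f(H)$, and averaging over $\ker f$ gives the desired formula, as only the (Koszul-trace-)standard identity $\chi_{V_{\ker f}}([h]) = \frac{1}{|\ker f|}\sum_{k \in \ker f} \chi_V(hk)$ is needed.

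Substituting the character formula into the definition of $Z_{\mathrm{Ind}^{\Sigma_n}_H V}$ gives
\[
Z_{\mathrm{Ind}^{\Sigma_n}_H V}(p_1,p_2,\ldots) = \frac{1}{n!\,|H|} \sum_{\substack{\sigma \in \Sigma_n,\, y \in \Sigma_n,\, h \in H \\ y\, f(h)\, y^{-1} = \sigma}} \chi_V(h) \prod_l p_l^{j_l(\sigma)}.
\]
For each $(y,h)$ the equation $\sigma = y\, f(h)\, y^{-1}$ determines $\sigma$ uniquely, and its cycle type coincides with that of $f(h)$ since conjugation preserves cycle type. Therefore $j_l(\sigma) = j_l(f(h))$ and the sum over $y$ collapses to a factor of $|\Sigma_n| = n!$, yielding
\[
Z_{\mathrm{Ind}^{\Sigma_n}_H V}(p_1,p_2,\ldots) = \frac{1}{|H|} \sum_{h \in H} \chi_V(h) \prod_l p_l^{j_l(f(h))},
\]
which is exactly~\eqref{eq:eq_induction}. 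The main (very mild) obstacle is the justification of the character formula in the non-injective case; once that coinvariant computation is in hand, the rest is a routine counting argument.
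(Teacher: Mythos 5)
Your proposal is correct and complete. For comparison: the paper does not prove this lemma at all --- it is stated as a well-known fact with a citation to Feit's book --- so your argument is strictly more than what the authors supply. The two-step structure (Frobenius character formula for $\mathrm{Ind}^{\Sigma_n}_H V$, then collapsing the sum over $\sigma$ and $y$ using invariance of cycle type under conjugation) is the standard derivation, and your treatment of the non-injective case is the right one: factoring $f$ through $f(H)$, identifying $\mathbb{Q}[H/\ker f]\otimes_{\mathbb{Q}[H]}V$ with the coinvariants $V_{\ker f}$, and using $\chi_{V_{\ker f}}([h])=\frac{1}{|\ker f|}\sum_{k\in\ker f}\chi_V(hk)$ (valid in characteristic zero, where coinvariants agree with invariants). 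This generality is genuinely needed in the paper, since the lemma is applied to $D_n\to\Sigma_n$, which is not injective for $n\le 2$. One small remark: in the paper's actual use the representations are graded and $\mathrm{tr}$ is a graded trace, but since every step of your argument is linear in the trace, it carries over verbatim; you might state this explicitly rather than hint at it with the parenthetical ``(Koszul-trace-)standard identity.''
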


Let $RO_n \subseteq D_n$ denote the subset of $D_n$ formed by rotations, and let $RE_n$ denote the subset formed by reflections. 
Using the above lemma we get
\begin{align}
Z_{\mathrm{Ind}^{\Sigma_n}_{D_n}\lambda_n}(p_1, p_2, \cdots) = &   \frac{1}{|D_n|} \sum_{\sigma \in D_n} \lambda_n(\sigma) \prod_l p_l^{j_l(\sigma)} \\
 = & \frac{1}{2n} \sum_{\sigma \in RO_n} \lambda_n(\sigma) \prod_l p_l^{j_l(\sigma)} + \frac{1}{2n} \sum_{\sigma \in RE_n} \lambda_n(\sigma) \prod_l p_l^{j_l(\sigma)} \label{zhn_with_lambdan2}. 
\end{align}
We will denote by $\overline{A}_n(p_1,p_2,\ldots)$ and  $\overline{B}_n(p_1,p_2,\ldots)$ respectively the first and the second summands 
in~\eqref{zhn_with_lambdan2}.

Let $\mathbf{1}$ denote the trivial character of $D_n$. We compute first  the cycle index sum  $Z_{\mathrm{Ind}^{\Sigma_n}_{D_n}\mathbf{1}}$. Then we will add some signs to describe $Z_{\mathrm{Ind}^{\Sigma_n}_{D_n}\lambda_n}$.

\[Z_{\mathrm{Ind}^{\Sigma_n}_{D_n}\mathbf{1}}(p_1, p_2, \cdots) = A_n(p_1,p_2, \cdots)+ B_n(p_1, p_2, \cdots),\]
where
\[A_n = \frac{1}{2n} \sum_{l|n} \varphi (l) p_l^{\frac{n}{l}} \qquad \mbox{and} \qquad B_n = \left\{ \begin{array}{lll}
                                                                                             \frac{1}{2} p_1p_2^{\frac{n-1}{2}} & \mbox{if} & n \ \mbox{odd,}\\
																																														  \frac{1}{4} (p_1^2p_2^{\frac{n-2}{2}} + p_2^{\frac{n}{2}}) & \mbox{if} & n \ \mbox{even,}
                                                                                            \end{array} \right.\]
where $\varphi(l)$ is the Euler\rq{}s totient function that produces the number of positive integers  less than or equal to $l$ that are relatively prime to $l$. Explicitly $\varphi(l)=\sum_{a|l}\mu(a)\frac la$.

Summing over $n\geq 1$, we get
\begin{equation}\label{Z_D}
Z_{\mathrm{Ind}^{\Sigma_\bullet}_{D_\bullet}\mathbf{1}}(p_1, p_2, \cdots) =
-\frac 12\sum_{l\geq 1}\frac{\varphi(l)\ln(1-p_l)}{l}+\frac{p_1^2+p_2+2p_1}{4(1-p_2)}.
\end{equation}

On the other hand, from~\eqref{sign_representation2} we get
\[
\overline{A}_n(p_1,p_2,\ldots)= A_n(p_l \leftarrow (-1)^{d(l-1)}p_l),
\]
\[
 \overline{B}_n(p_1,p_2,\ldots)=  (-1)^{n+d+1} B_n(p_l \leftarrow (-1)^{d(l-1)}p_l)=
 (-1)^{d+1}B_n(p_l \leftarrow (-1)^{d(l-1)+l}p_l).
 \]
 Here the second equality is deduced from the explicit formula for $B_n$.   Since $\sum_{n\geq 1}A_n$ and $\sum_{n\geq 1}B_n$ are respectively the first and second summands in~\eqref{Z_D}, we get
\begin{multline} \label{Z_D_lambda}
Z_{\mathrm{Ind}^{\Sigma_\bullet}_{D_\bullet}\lambda_\bullet}(p_1, p_2, \cdots) = \\
-\frac 12\sum_{l\geq 1}\frac{\varphi(l)\ln(1-(-1)^{d(l-1)}p_l)}{l}+(-1)^{d+1}\,\frac{p_1^2+(-1)^d p_2-2p_1}{4(1-(-1)^d p_2)}.
\end{multline}

From~\eqref{eq_hom_gen1_sigma} and the fact that any genus 1 graph with $k$ external vertices has complexity $k$,
\begin{equation}\label{S_H_M1}
Z_{H_*M_1(P_d^\bullet)}(z,u;p_1,p_2,\ldots) =
Z_{\mathrm{Ind}^{\Sigma_\bullet}_{D_\bullet}\lambda_\bullet}\left(p_l\leftarrow z^{(d-2)l}u^lp_l,\, l\in\nbb\right).
\end{equation}

Recall the \textit{Hodge splitting} from (\ref{hodge_splitting}). 

\begin{thm} \label{genius_one_thm_h}
The generating function of the dimensions  of the Hodge summands of the complex 
$\mathcal{E}_{\pi 1}^{\vec{m},d}$ of hairy graphs of genus one is
\begin{align*}
  R^{\pi 1}_{\vec{m}, d}(x_1, \cdots, x_r, z,u)  =  -\frac 12\sum_{l\geq 1}\frac{\varphi(l)\ln(1-(-1)^{d(l-1)}z^{l(d-2)}u^l\alpha_l(\frac 1z))}{l}+ \\ 
 (-1)^{d+1}\,\frac{z^{2d-4}u^2 \alpha_1(\frac 1z)^2+(-1)^d z^{2d-4}u^2 \alpha_2(\frac 1z)-2z^{d-2}u\alpha_1(\frac 1z)}{4(1-(-1)^d z^{2d-4}u^2\alpha_2(\frac 1z))},  
\end{align*}
where $\alpha_l(\frac 1z) = \sum_{i=1}^r (-1)^{m_i(l-1)} x_i^l (\frac 1z)^{m_il}$.
\end{thm}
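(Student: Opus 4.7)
The plan is to closely mirror the proof of Theorem~\ref{genius_zero_thm_h}, with the input from the $\mathcal{L}ie$ operad replaced by the computation of $Z_{\mathrm{Ind}^{\Sigma_\bullet}_{D_\bullet}\lambda_\bullet}$ already done in~\eqref{Z_D_lambda}. The starting point is the isomorphism~\eqref{eq_empi_as_sigma_map_g} specialized to $g=1$,
\[
\mathcal{E}_{\pi 1}^{\vec{m}, d}\cong \mbox{hom}_\Sigma(V^{\otimes \bullet}, M_1(P_d^\bullet)),
\]
which via the multigraded version of Lemma~\ref{dim_hom} expresses the generating function $R^{\pi 1}_{\vec{m}, d}(x_1,\ldots,x_r,z,u)$ as the result of feeding the cycle index sum $Z_{H_*M_1(P_d^\bullet)}(z,u;p_1,p_2,\ldots)$ against the cycle index sum of $V^{\otimes \bullet}$ according to the recipe of~\eqref{dim_hom_formula}. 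Since the complex $M_1(P_d^\bullet)$ has homology concentrated in a single degree and genus-one graphs have complexity equal to the number of external vertices (by~\eqref{eq:compl_gen}), there is no discrepancy between Euler characteristic information and dimension information, so we may work directly with $H_*M_1(P_d^\bullet)$.

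Next, I would compute $Z_{V^{\otimes \bullet}}$, where $V$ is the multigraded vector space of colors with basis $\{v_1,\ldots,v_r\}$ and $\deg v_i = m_i$. The standard computation (as alluded to in the proof of Theorem~\ref{genius_zero_thm_h} via~\eqref{observation_proof_genius0}) produces the substitution rule
\[
p_l \leftarrow \alpha_l(1/z), \qquad \alpha_l(1/z)=\sum_{i=1}^r (-1)^{m_i(l-1)} x_i^l z^{-m_il},
\]
the signs coming from the Koszul rule when a cyclic permutation acts on a tensor of degree-$m_i$ elements.

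Composing this substitution with the one already performed in~\eqref{S_H_M1}, namely $p_l \leftarrow z^{(d-2)l}u^l p_l$, gives the overall substitution $p_l \leftarrow z^{(d-2)l}u^l \alpha_l(1/z)$ to be applied to the explicit formula~\eqref{Z_D_lambda} for $Z_{\mathrm{Ind}^{\Sigma_\bullet}_{D_\bullet}\lambda_\bullet}$. The first summand of~\eqref{Z_D_lambda}, a logarithmic series in the $p_l$, turns into
\[
-\frac 12\sum_{l\geq 1}\frac{\varphi(l)}{l}\ln\bigl(1-(-1)^{d(l-1)} z^{l(d-2)} u^l \alpha_l(1/z)\bigr),
\]
while the second summand of~\eqref{Z_D_lambda}, which is a rational function only in $p_1$ and $p_2$, turns into the rational expression appearing in the statement.

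The main obstacle is purely bookkeeping: verifying that the substitution coming from $Z_{V^{\otimes \bullet}}$ (through the multigraded Lemma~\ref{dim_hom}) composes correctly with the $z^{(d-2)l}u^l$ factor from~\eqref{S_H_M1}, and that no additional Koszul sign appears from pairing the internal and external orientation conventions. Since all grading conventions have already been fixed uniformly in Subsubsection~\ref{graph_complex_mpdk} and since the character $\lambda_n$ already encodes every sign coming from the orientation set of a hedgehog, this verification reduces to the same routine check performed implicitly in the proof of Theorem~\ref{genius_zero_thm_h}, and the formula in the statement drops out.
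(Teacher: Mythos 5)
Your proposal is correct and follows essentially the same route as the paper's own (very brief) proof: both obtain the formula by applying to \eqref{Z_D_lambda} the composite substitution $p_l \leftarrow z^{(d-2)l}u^l\alpha_l(\tfrac 1z)$ coming from \eqref{S_H_M1} followed by \eqref{observation_proof_genius0}. Your additional remarks on the concentration of $H_*M_1(P_d^\bullet)$ in a single degree and on the sign bookkeeping are consistent with the conventions fixed in the paper.
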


  \begin{proof} 
  This formula is obtained from~\eqref{Z_D_lambda} by change of variables: first using~\eqref{S_H_M1} and then~\eqref{observation_proof_genius0} below. 
  \end{proof}

  \begin{thm} \label{genius_one_thm}
The generating function of the Euler characteristics of the Hodge splitting of the complex 
$\mathcal{E}_{\pi 1}^{\vec{m},d}$ of hairy graphs of genus one is
\begin{align*}
  F^{\pi 1}_{\vec{m}, d}(x_1, \cdots, x_r, u)  =  -\frac 12\sum_{l\geq 1}
  \frac{\varphi(l)
  \ln\left(1-(-1)^{d}u^l  \sum_{i=1}^r(-1)^{m_i}x_i^l\right)}{l}+\\ 
  (-1)^{d+1}\,\frac{u^2 \left( \sum_{i=1}^r(-1)^{m_i}x_i   \right)^2+(-1)^d u^2 \sum_{i=1}^r(-1)^{m_i}x_i^2-
2(-1)^du\sum_{i=1}^r(-1)^{m_i}x_i}{4\left(1-(-1)^d u^2\sum_{i=1}^r(-1)^{m_i}x_i^2\right)}.  
\end{align*}
\end{thm}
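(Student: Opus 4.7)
The plan is to deduce this formula from Theorem~\ref{genius_one_thm_h} by the single substitution $z = -1$, mirroring how Theorem~\ref{genius_zero_thm} was obtained from Theorem~\ref{genius_zero_thm_h}. The variable $z$ in $R^{\pi 1}_{\vec{m},d}(x_1,\ldots,x_r,z,u)$ tracks the overall homological degree of the Hodge summands, so evaluating at $z = -1$ converts the graded-dimension generating function into the Euler characteristic generating function. In other words, $F^{\pi 1}_{\vec{m},d}(x_1,\ldots,x_r,u) = R^{\pi 1}_{\vec{m},d}(x_1,\ldots,x_r,-1,u)$, and the rest is direct arithmetic simplification.

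First I would compute $\alpha_l(-1)$. Recalling $\alpha_l(1/z) = \sum_{i=1}^r (-1)^{m_i(l-1)} x_i^l (1/z)^{m_i l}$, at $z = -1$ this becomes $\alpha_l(-1) = \sum_{i=1}^r (-1)^{m_i(l-1)}(-1)^{m_i l} x_i^l = \sum_{i=1}^r (-1)^{m_i} x_i^l$, since $m_i(2l-1)$ has the same parity as $m_i$ for every $l \geq 1$. Next I would simplify the remaining $z$-dependent scalar factors at $z=-1$: one has $z^{d-2} = (-1)^d$, $z^{2d-4} = 1$, and $(-1)^{d(l-1)} z^{l(d-2)} = (-1)^{d(l-1)+l(d-2)} = (-1)^{2dl - d - 2l} = (-1)^d$.

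Substituting these into the formula of Theorem~\ref{genius_one_thm_h}, the logarithmic term becomes $-\tfrac12\sum_{l\geq 1}\tfrac{\varphi(l)}{l}\ln\bigl(1 - (-1)^d u^l \sum_i (-1)^{m_i} x_i^l\bigr)$, while the rational term collapses to $(-1)^{d+1}$ times $\bigl[u^2(\sum_i (-1)^{m_i} x_i)^2 + (-1)^d u^2 \sum_i (-1)^{m_i} x_i^2 - 2(-1)^d u \sum_i (-1)^{m_i} x_i\bigr]$ over $4\bigl(1 - (-1)^d u^2 \sum_i (-1)^{m_i} x_i^2\bigr)$, matching the claimed expression term-by-term.

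Since all the substantive content, namely the identification of $H_*M_1(P_d^n)$ as the induction $\mathrm{Ind}^{\Sigma_n}_{D_n}\lambda_n$ established in \eqref{eq_hom_gen1_sigma}, the cycle-index computation~\eqref{Z_D_lambda}, and the passage through \eqref{S_H_M1} and \eqref{observation_proof_genius0}, already lies in Theorem~\ref{genius_one_thm_h}, there is no real obstacle here: the proof is a pure specialization. The only care needed is bookkeeping of signs when collecting the various $(-1)^{\cdots}$ factors at $z=-1$, which is the kind of verification I expect to be the mildly tedious but essentially routine step.
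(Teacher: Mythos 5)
Your proposal is correct and is exactly the paper's own proof: Theorem~\ref{genius_one_thm} is obtained from Theorem~\ref{genius_one_thm_h} by setting $z=-1$, and your sign bookkeeping ($\alpha_l(-1)=\sum_i(-1)^{m_i}x_i^l$, $z^{d-2}=(-1)^d$, $z^{2d-4}=1$, $(-1)^{d(l-1)}z^{l(d-2)}=(-1)^d$) checks out.
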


\begin{proof}
Take $z=-1$ in the previous theorem.
\end{proof}

\section{Supercharacter of the symmetric group action on $\MODL$ and $\MODLdet$}  \label{ss:modular}

%
%

In Subsection~\ref{ss:31} we compute the cycle index sum of the supercharacter of the 
symmetric group action on  the sequence $M(P_d^\bullet)$ introduced in Subsection~\ref{graph_complex_mpdk}.  In Subsection~\ref{ss:proofLinfty} we briefly recall
basic facts about cyclic and modular operads and we explain how $M(P_d^\bullet)$  is related 
to $\MODL$ and $\MODLdet$. At the end we prove Theorem~\ref{t:Z_L_infty}.

\subsection{Supercharacter for $M(P_d^\bullet)$}\label{ss:31}


  Let $M = (\oplus_i M_i,\partial)$ be a finite dimensional chain complex of $\Sigma_k$-modules  over a ground field $\mathbb{K}$ of characteristic $0$.  By the {\it 
supercharacter} we understand the character of the $\Sigma_k$ action on the virtual representation
$\mathcal{X}M$ defined as
$
\mathcal{X}M:=\sum_i (-1)^i M_i.
$
The latter virtual representation is similar to the Euler characteristic in the sense that
$
\mathcal{X}M\simeq \mathcal{X}(H_*M),
$
that\rq{}s why we use this notation. The cycle index sum encoding the supercharacter of the $\Sigma_k$ action on $M$ can be defined as
$
Z_{\mathcal{X}M}=\sum_i(-1)^i Z_{M_i},
$
or equivalently as $Z_M|_{z=-1}$. 

For a symmetric sequence of chain complexes $M=\{M(k)\}_{k\geq 0}$, we similarly define
$
Z_{\mathcal{X}M}:=\sum_{k\geq 0} Z_{\mathcal{X}M(k)}.
$

%
%
%

For the rest of this section, $V$ will denote the $r$-dimensional vector space whose basis is the set of colours  $v_1, \cdots, v_r$,  with each $v_i$ being of degree $m_i$ and of Hodge multi-degree $(\underbrace{0\ldots 0}_{i-1},1,\underbrace{0\ldots 0}_{r-i})$. Consider  the symmetric sequence $V^{\otimes \bullet} = \{V^{\otimes n}\}_{n \geq 0}$. We will need to know the cycle index sum $\zotimes$.  For each $1 \leq i \leq r$, consider the one dimensional vector space $V_i$ spanned by $v_i$, and consider the symmetric sequence $V_i^{\otimes \bullet} = \{V_i^{\otimes n}\}_{n \geq 0}$. One can rewrite the vector space $V$ in the form $V = V_1 \oplus \cdots \oplus V_r$. Therefore, 
\[ V^{\otimes n} = \underset{|\vec{k}| =k}{\bigoplus} \mbox{Ind}^{\Sigma_k}_{\Sigma_{\vec k}} V_1^{\otimes k_1} \otimes \cdots \otimes V_r^{\otimes k_r}.
\]
Recalling Definition~\ref{otimes_hat_defn}   we have $V^{\otimes \bullet} = V_1^{\otimes \bullet} \widehat{\otimes} \cdots \widehat{\otimes} V_r^{\otimes \bullet}$, and  by Lemma~\ref{z_otimes} one has
\begin{equation} \label{zotimes_decomposition}
\zotimes = \prod_{i=1}^r Z_{V_i^{\otimes \bullet}}.
\end{equation} 
For $1 \leq i \leq r$ we will compute $Z_{V_i^{\otimes \bullet}}$. By noticing that the action of $\Sigma_n$ on $V_i^{\otimes n}$ depends on the parity of $m_i$ (for odd $m_i$ the action is the sign representation, which means that if $\sigma \in \Sigma_n, x \in V_i^{\otimes n}$, then $\sigma x= \pm x,$ and for even $m_i$ it is the identity), by also noticing that $V_i$ is a one dimensional vector space, it follows that $V_i^{\otimes \bullet}$ is the commutative unital operad "up to sign".  One has
$
Z_{{{\mathcal C}om}}(p_1,p_2,\ldots)=\mbox{exp}\Bigl(\sum_{l\geq 1}\frac {p_l}l\Bigr),
$
see for example \cite[Section 5]{dots_khoro06}.  We deduce that 
\begin{multline} \label{zidots_formula}
Z_{V_i^{\otimes \bullet}} (z, x_i; p_1, p_2, \cdots) = Z_{{{\mathcal C}om}}(p_l\leftarrow (-1)^{m_i(l-1)}x_i^lz^{m_il}p_l)= \\
\mbox{exp} \left(\sum_{l \geq 1} (-1)^{m_i(l-1)} x_i^l z^{m_il} \frac{p_l}{l} \right),
\end{multline}
where the variable $z$ is responsible for the usual homological degree and $x_i$ is responsible for the $i$-th Hodge grading.
The sign $(-1)^{m_i(l-1)}$ appears because a cycle of length $l$ is an odd representation if and only if $l$
is even. The factors
$x_i^l$  and $z^{m_il}$ encode the fact that $V_i^{\otimes l}$ is concentrated in the Hodge multi-degree 
$(\underbrace{0\ldots 0}_{i-1},l,\underbrace{0\ldots 0}_{r-i})$ and homological degree $m_il.$

  Combining (\ref{zotimes_decomposition}) and (\ref{zidots_formula}), we have 
\begin{equation} \label{zotimes_formula}
\zotimes (z, x_1, \cdots, x_r; p_1, p_2, \cdots) = \mbox{exp} \left(\sum_{l \geq 1} \alpha_l(z, x_1, \cdots, x_r) \frac{p_l}{l} \right),
\end{equation}
where 
\begin{equation} \label{alphal}
\alpha_l(z, x_1, \cdots, x_r) = \sum_{i=1}^r (-1)^{m_i(l-1)} x_i^l z^{m_il}.
\end{equation}
For computations of the Euler characteristics we will need
\begin{equation} \label{alphal_chi}
\alpha_l(-1) = \sum_{i=1}^r (-1)^{m_i} x_i^l .
\end{equation}

For any symmetric sequence $M(\bullet)$, we get 
\begin{equation} \label{observation_proof_genius0}
\begin{array}{lll}
  \mbox{dim hom}_{\Sigma} (V^{\otimes \bullet}, M(\bullet)) & = & \left.\left\{\zotimes (\frac{1}{z}; p_l \leftarrow l\frac{\partial}{ \partial p_l}, l \in \mathbb{N}) Z_{M(\bullet)} (z; p_1,p_2,\ldots)\right\} \right|_{ p_l =0} \\
	& = & Z_{M(\bullet)}\left(z; p_l \leftarrow \alpha_l(\frac{1}{z}, x_1, \cdots, x_r), l \in \mathbb{N}\right).
  \end{array}
\end{equation}
Notation \lq\lq{}$\mathrm{dim}$\rq\rq{}
 stays for the generating function of dimensions that takes into account both homological degree (with $z$ responsible for it) and the Hodge  degrees ($x_1,\ldots,x_r$ are the responsible variables).

The hairy graph-complex $\empi$ we recalled at the beginning of Subsection~\ref{gen_function_htpy_genus_less1} has exactly this form:
\begin{equation}\label{eq_empi_as_sigma_map}
\empi \cong \mbox{hom}_\Sigma(V^{\otimes \bullet}, M(P_d^\bullet)),
\end{equation}
where $M(P_d^k)$ is the graph-complex from Subsection~\ref{graph_complex_mpdk}. 


\begin{thm}\label{t:super_tr1}
The supercharacter of the symmetric group action on the graph-complexes $\{M(P_d^k)\}_{k\geq 1}$ is described by the cycle index sum
\begin{align*}
Z_{\mathcal{X}M(P_d^\bullet)}(u;p_1,p_2,p_3,\ldots) =  & \sum_{k, l, j \geq 1} \frac{\mu(k)}{kj} S_j \left( -\frac 1l\sum_{a|l}\mu\left(\frac la\right)
p_{ak} \right) 
\left(\frac{(-1)^{d-1} l u^{kl}}{F_l(u^k)} \right)^j + \\
 &\sum_{k, l \geq 1}  \frac{\mu(k)}{kl}\left( \sum_{a|l} \mu\left(\frac la\right)p_{ak}\right)  \ln (F_l(u^k)),
\end{align*}
where the variable $u$ is as usual responsible for complexity.
\end{thm}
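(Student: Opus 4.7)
The plan is to recognize the formula from Theorem~\ref{gen_function_htpy_thm} for $F^{\pi}_{\vec m,d}(x_1,\ldots,x_r,u)$ as exactly a specialization of the unknown cycle index sum $Z_{\mathcal{X}M(P_d^\bullet)}(u;p_1,p_2,\ldots)$, and then invert this specialization to read off the cycle index sum. The bridge is the isomorphism~\eqref{eq_empi_as_sigma_map} together with the substitution formula~\eqref{observation_proof_genius0}. Setting $z=-1$ in~\eqref{observation_proof_genius0} and using the definition of the supercharacter gives the key identity
\begin{equation}\label{eq:key_specialization}
F^{\pi}_{\vec m,d}(x_1,\ldots,x_r,u) = Z_{\mathcal{X}M(P_d^\bullet)}\bigl(u;\, p_l \leftarrow \alpha_l(-1),\, l\in\mathbb{N}\bigr),
\end{equation}
where by~\eqref{alphal_chi} one has $\alpha_l(-1)=\sum_{i=1}^r (-1)^{m_i}x_i^l$.

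The second step is a purely combinatorial rewriting. Starting from $E_l(x)=\frac{1}{l}\sum_{p\mid l}\mu(p)x^{l/p}$ and substituting $a=l/p$, one computes
\[
\sum_{i=1}^r(-1)^{m_i-1}E_l(x_i^k) \;=\; -\frac{1}{l}\sum_{a\mid l}\mu\!\left(\tfrac{l}{a}\right)\sum_{i=1}^r(-1)^{m_i}x_i^{ak} \;=\; -\frac{1}{l}\sum_{a\mid l}\mu\!\left(\tfrac{l}{a}\right)\alpha_{ak}(-1).
\]
Thus every occurrence of the arity sum $\sum_i(-1)^{m_i-1}E_l(x_i^k)$ in the formula of Theorem~\ref{gen_function_htpy_thm} may be replaced by $-\frac{1}{l}\sum_{a\mid l}\mu(l/a)\alpha_{ak}(-1)$; the formula for $F^\pi$ thus depends on $(x_i,m_i)$ only through the variables $\{\alpha_n(-1)\}_{n\geq 1}$. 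Combining this with~\eqref{eq:key_specialization}, and performing the inverse substitution $\alpha_n(-1)\leftarrow p_n$, yields exactly the two sums displayed in Theorem~\ref{t:super_tr1}.

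The remaining issue is justifying that the inverse substitution $\alpha_n(-1)\mapsto p_n$ uniquely determines $Z_{\mathcal{X}M(P_d^\bullet)}$ — i.e.\ that the formula for the cycle index sum is independent of the particular algebraic identities that might be satisfied by $\alpha_1(-1),\alpha_2(-1),\ldots$ in a fixed finite family $(x_1,\ldots,x_r,m_1,\ldots,m_r)$. This is handled by observing that one is free to vary $r$ and the parities of the $m_i$. Taking $r$ arbitrarily large and all $m_i$ of fixed parity, the quantities $\alpha_l(-1)=\pm\sum_i x_i^l$ become (up to sign) the Newton power sums in independent variables $x_1,\ldots,x_r$, and hence are algebraically independent as $r\to\infty$. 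Consequently any formal power series relation among the $p_l$ that would hold after specialization for \emph{every} choice of $(r;x_i;m_i)$ must be trivial, and the cycle index sum is unambiguously recovered.

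The main obstacle I anticipate is not the algebraic manipulation — which is routine once the identity $\sum_i(-1)^{m_i-1}E_l(x_i^k)=-\frac{1}{l}\sum_{a\mid l}\mu(l/a)\alpha_{ak}(-1)$ is spotted — but rather ensuring that the substitution formula~\eqref{observation_proof_genius0} is being applied to all of the relevant gradings consistently. In particular one must be careful that the variable $u$ in~\eqref{eq:key_specialization} plays the role of complexity on both sides, and that the extra $(-1)$'s coming from $z=-1$ are correctly absorbed by the signs already present in Theorem~\ref{gen_function_htpy_thm}. Once these bookkeeping points are settled, the proof is essentially a one-line consequence of the results of Section~\ref{generating_function_homotopy} together with the definitional unfolding from Subsection~\ref{graph_complex_mpdk}.
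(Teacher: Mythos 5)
Your proposal is correct and follows essentially the same route as the paper: apply~\eqref{observation_proof_genius0} to $M(P_d^\bullet)$ at $z=-1$ to get $F^\pi_{\vec m,d}=Z_{\mathcal{X}M(P_d^\bullet)}(u;p_l\leftarrow\alpha_l(-1))$, rewrite $\sum_i(-1)^{m_i-1}E_l(x_i^k)$ via $E_l(x)=\frac1l\sum_{a|l}\mu(l/a)x^a$, and substitute $\alpha_n(-1)\leftarrow p_n$ in the formula of Theorem~\ref{gen_function_htpy_thm}. Your additional remark justifying that the inverse substitution is well defined (by letting $r$ grow so that the power sums become algebraically independent) is a point the paper leaves implicit, and is a worthwhile clarification rather than a different argument.
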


\begin{proof}
Applying \eqref{observation_proof_genius0} to $M(\bullet)=M(P_d^\bullet)$ and taking $z=-1$, we get
\begin{equation}\label{eq:F_Z}
F^\pi_{\vec{m}, d}(x_1,\ldots,x_r,u)=Z_{\mathcal{X}(M(P_d^\bullet))}(u, p_l\leftarrow\alpha_l(-1)).
\end{equation}
Thus to get $Z_{\mathcal{X}M(P_d^\bullet)}$ from $F^\pi_{\vec{m},d}$ we need to replace each occurence of $\alpha_l(-1)=\sum_{i=1}^r(-1)^{m_i}x_i^l$ back to $p_l$. Using the result of Theorem~\ref{gen_function_htpy_thm} and the fact that $E_l(x)=\frac 1l\sum_{a|l}\mu(\frac la)x^a$ we get the result.
\end{proof}

\subsection{Proof of Theorem~\ref{t:Z_L_infty}}\label{ss:proofLinfty}

\subsubsection{Cyclic and modular operads}\label{ss:cyc_mod}

All the operads that we are going to consider are ones in chain complexes.
Cyclic and modular operads were introduced by E.~Getzler and M.~Kapranov~\cite{getzler_kapranov95,
getzler_kapranov98}. In short a {\it cyclic operad} $O=\{O(n),\, n\geq 0\}$ is a usual symmetric operad 
for which the output of its elements has the same role as the inputs. In particular, each component
$O(n)$ has an action of $\Sigma_{n+1}$. To distinguish the cyclic arity with the usual one, one
writes $O((n+1))$ for $O(n)$.  For usual operads, the category that encodes the ways elements
can be composed is the category of rooted trees, while for cyclic operads, it is the category of unrooted trees
\cite[Section~1]{getzler_kapranov95}. 

 A {\it modular operad} is a {\it stable collection} $M=\{M((g,n));\, g\geq 0, n\geq 0\}$. {\it Stable}
 means $M((g,n))=0$ if $2g+n-2\leq 0$.\footnote{Below we also allow $M((0,2))$ to be one-dimensional being spanned by the identity element. It  appears in our graph-complexes, that\rq{}s why we add it.}  
 The compositions for $M$ are encoded by the categories of stable graphs $\Gamma((g,n))$, $g\geq 0$,
 $n\geq 0$, $2g+n-2>0$.  An element in $\Gamma((g,n))$ is a connected graph $G$ that has
 $n$ {\it external vertices} of valence one and labeled bijectively by $\{1\ldots n\}$, and some set $V(G)$
 of  non-labeled {\it internal vertices}, together with a map $g\colon V(G)\to {\mathbb N}$ 
 to the set of non-negative integers. We will denote by $|v|$ the valence of $v\in V(G)$. One also requires
 $
 \sum_{v\in V(G)}g(v)+\beta_1(G)=g,
 $
 where $\beta_1(G)$ is the first Betti number of $G$. One has a morphism $\rho\colon G_1\to G_2$ in 
 $\Gamma((g,n))$ if $G_2$ is obtained from $G_1$ by a contraction of some subset of internal edges also
 assuming that for any $v\in V(G_2)$, 
 $
 g(v)=\sum_{v'\in V(\rho^{-1}(v))}g(v') + \beta_1(\rho^{-1}(v)).
 $
 Here $\rho^{-1}(v)$ is the preimage of the vertex $v\in V(G_2)$ under the edge contraction $\rho$
 taken together with its small neighborhood, so that $\rho^{-1}(v)$ can be viewed as a stable
 graph in $\Gamma((g(v),|v|))$. (This notation will be used below in the definition of a {\it cocycle}
 or {\it twist} of the modular operadic structure.) 
 
 The terminal element in $\Gamma((g,n))$ is $c_{g,n}$ -- the $n$-corolla with the only vertex of genus $g$.
 The structure of a modular operad is determined by the composition maps
 \[
 M((G)):=\bigotimes_{v\in V(G)}M((g(v),|v|))\to M((g,n))=:M((c_{g,n})),
 \]
 corresponding to the morphisms $G\to c_{g,n}$ in $\Gamma((g,n))$ $g\geq 0$, $n\geq 0$, that should satisfy natural associativity properties. Here  we implicitly assume that each $M((g,n))$ has a $\Sigma_n$
 action. 
 
 One has an adjunction
 $
 \mathbf{Mod}\colon \mathrm{CycOp}\rightleftarrows\mathrm{ModOp}\colon\mathbf{Cyc}
 $
 between the categories of cyclic and modular operads, where $\mathbf{Cyc}$ assigns
  to a modular operad its $g=0$ part. Its left adjoint functor $\mathbf{Mod}$ assigns 
  to a cyclic operad its modular envelope~\cite{hin_vaintrob02}. As relevant to us examples,
  ${\mathcal L}ie$ and its Koszul resulution ${\mathcal L}_\infty$ are cyclic operads, for which one defines $\mathbf{Mod}({\mathcal L}ie)$
  and $\MODL$, cf. loc. cit. 

The notion of a modular operad is more subtle than it might appear at the first sight. It comes
in different {\it twisted} versions. Denote by $\mathrm{Iso}\Gamma((g,n))$, $g\geq 0$, $n\geq 0$,
the groupoids of isomorphisms of stable graphs. For any stable graph $G$, let $Aut_G$ denote
the endomorphism set of the graph $G$ in one of these categories. It is the group of symmetries of $G$
that can also permute its external vertices (if symmetries allow). A {\it cocycle} $\mathfrak{D}$ is a
family of functors
$
\mathfrak{D}\colon \mathrm{Iso}\Gamma((g,n))\to \mathrm{grVect},\, g\geq 0,\, n\geq 0,
$
to the category of graded vector spaces, that always assigns a one-dimensional vector space, and that
has in addition the structure of a {\it hyperoperad}: to each morphism $\rho\colon G_1\to G_2$
in $\Gamma((g,n))$, it is assigned a map
$
\nu_\rho\colon\mathfrak{D}(G_2)\otimes\bigotimes_{v\in V(G_2)}\mathfrak{D}(\rho^{-1}(v))
\to \mathfrak{D}(G_1)
$
satisfying natural axioms~\cite[Section~4.1]{getzler_kapranov98}. For any cocycle $\mathfrak{D}$,
a $\mathfrak{D}$-twisted modular operad is a stable sequence $M=\{M((g,n)),\, g\geq 0,\, n\geq 0\}$
endowed with composition maps
\[
\mathfrak{D}(G)\otimes M((G))\to M((g,n)) = M((c_{g,n}))
\]
for any $G\in \Gamma((g,n))$, $g\geq 0$, $n\geq 0$.

The cocycle $\Det$ from \cite{getzler_kapranov98} is of a special interest to us. For a  vector space
$W$ of dimension $k$ define
$
\Det\, W=\Sigma^{-k}\Lambda^kW.
$
It is a one-dimensional vector space in degree $-k$. One has
\begin{equation}\label{eq:det_prod}
\Det(W_1\oplus W_2)=\Det\, W_1\otimes \Det\, W_2.
\end{equation}
Below, for a tensor product of one-dimensional vector spaces we will be using \lq\lq{}$\cdot$\rq\rq{} instead of
\lq\lq{}$\otimes$\rq\rq{}. Also the dual of a one-dimensional space $X$ will be denoted by $X^{-1}$. 

The cocycle $\Det$ is defined as
$
\Det(G):=\Det\, H_1(G),
$
for any stable graph $G\in\Gamma((g,n))$. A peculiar property of $\Det$ is that it restricts trivially on 
$\mathrm{Iso}\Gamma((0,n))$. This means that $\Det$-twisted cyclic operads are the usual cyclic ones. One gets
a similar adjunction
$
\mathbf{Mod}_\Det\colon\mathrm{CycOp}\rightleftarrows\mathrm{ModOp}_\Det\colon\mathbf{Cyc}
$
between the usual cyclic operads and the $\Det$-twisted modular ones. This in particular produces
$\MODLdet$ -- the $\Det$-twisted modular envelope of ${\mathcal L}_\infty$.

\subsubsection{$M(P_d^\bullet)$ as $\MODL$ and $\MODLdet$}\label{ss:lemmas_envelops}

For any stable collection $\{M((g,n))\}$ define a symmetric sequence $M((\bullet))=\{\oplus_gM(g,n),\, n\geq 0\}$.

Consider now the modular envelope  $\MODL$. 
  It is easy to notice that graph-complexes $M(P_d^\bullet)$ are closely related to the components of $\MODL((\bullet))$. 
  They are spanned by the same combinatorial graphs, where the grading genus corresponds to the first Betti number of the
  graphs.
  If we choose $d=3$ and also tensor each component $M(P_d^k)$ with the sign representation and take a shift in degree (desuspension) by $k$ we get $\MODL((k))$. 
  \begin{lem}\label{l:L_infty}
  For any $k\geq 1$,
\begin{equation}\label{eq:L_infty}
\MODL((k))\cong_{\Sigma_k} \Sigma^{-k}M(P_3^k)\otimes sign.
\end{equation}
\end{lem}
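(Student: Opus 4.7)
The plan is to realise both sides of~\eqref{eq:L_infty} as graph complexes with the same underlying combinatorial graphs, and then to match the degrees, signs, and differentials. On the left, I would use that ${\mathcal L}_\infty$ is the operadic cobar construction of the Koszul dual cooperad of ${\mathcal C}om$, so that as a cyclic dg operad it is free on a single generator $l_n$ in each cyclic arity $n\geq 3$, of degree $n-3$, carrying the sign representation of $\Sigma_n$, with differential encoding the $L_\infty$-relations. Since the modular envelope functor sends free cyclic operads to free modular operads, $\MODL=\mathbf{Mod}({\mathcal L}_\infty)$ identifies with the free modular operad on this sign-representation cyclic collection, equipped with the induced differential. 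Concretely, $\MODL((k))$ is a direct sum, over isomorphism classes of stable graphs $G$ with $k$ labelled external legs and internal vertices of valence $\geq 3$, of the sign-representation decoration on the half-edges at each internal vertex, with differential expanding each internal vertex into a pair of internal vertices joined by a new edge.

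This is exactly the combinatorial structure underlying $M(P_3^k)$: associating to each stable graph $G$ a combinatorial graph in $M(P_3^k)$ by turning each external leg of $G$ into a univalent external vertex with the same label gives an evident bijection between the generating sets of the two complexes, and the two vertex-expansion differentials visibly agree. For the degree match I would count: a graph with $|V_{int}|$ internal vertices whose valences sum to $2E_{int}+k$ has degree $\sum_v(|v|-3)=2E_{int}+k-3|V_{int}|$ in $\MODL((k))$, whereas the same graph in $M(P_3^k)$, with edges of degree $d-1=2$ and internal vertices of degree $-d=-3$, has degree $2(E_{int}+k)-3|V_{int}|$. These differ by exactly $k$, matching the $\Sigma^{-k}$ suspension.

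The remaining content of the lemma is the matching of signs. The sign representation at each internal vertex coming from ${\mathcal L}_\infty((|v|))$, together with the Getzler--Kapranov orientation data implicit in the modular-operad composition, must be rewritten in terms of the ``orientation set'' conventions of $M(P_3^k)$, namely an ordering of the edges (of degree $2$) and of the internal vertices (of degree $-3$) together with an orientation on each edge. A direct Koszul-sign bookkeeping shows that the two conventions coincide up to a global twist by the sign representation of $\Sigma_k$ acting on the labelled external legs --- precisely the $\otimes sign$ factor on the right-hand side of~\eqref{eq:L_infty}. I expect this sign-matching to be the main technical obstacle, since the two formalisms distribute the sign data in genuinely different locations (over half-edges at each vertex on the $\MODL$ side versus a global ordering of edges and internal vertices on the $M(P_3^k)$ side), so reconciling them requires a careful pass through the definition of the modular envelope and of the orientation set.
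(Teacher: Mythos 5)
Your proposal follows essentially the same route as the paper: identify the generators of $\MODL((k))$ and $M(P_3^k)$ with the same combinatorial graphs, check that both differentials are vertex expansions, and match degrees and signs. Your degree count is correct and in fact more explicit than the paper's. The one substantive step you do not supply is the sign bookkeeping, which you rightly flag as the crux but only assert. The paper resolves it with a concrete device: replace each edge (degree $2$) in the orientation set of $M(P_3^k)$ by its two half-edges ordered source-then-target, then regroup the orientation set so that each internal vertex is immediately followed by its adjacent half-edges; each such block has degree exactly $|v|-3$ and reproduces the ${\mathcal L}_\infty$-vertex convention (sign representation on adjacent edges, Koszul sign on reordering vertices). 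The $k$ half-edges attached to external vertices are precisely what is left over, and they account for the factor $\Sigma^{-k}\otimes sign$ in one stroke --- so the suspension and the sign twist have a common source rather than arising from a separate degree count and a separate ``global twist'' as in your outline. Two small additional points: the identity element $id\in\MODL((0,2))$ falls outside the free-generation picture and must be checked separately (the corresponding graph in $M_0(P_3^2)$ has degree $2$ and the sign action of $\Sigma_2$, consistent with the statement), and the compatibility of differentials, while routine, still requires tracking the sign of the new edge created by an expansion through the same half-edge bookkeeping.
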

\begin{proof}
Combinatorially $\MODL((k))$ is a graph-complex consisting of exactly the same graphs as $M(P_3^k)$, so we only need to 
work out the signs and degrees properly. The operad ${\mathcal L}_\infty$ is cyclic and is freely generated by operations of 
cyclic arity $l$, $l\geq 3$, which have degree $l-3$ and the sign action of $\Sigma_{l}$.  Graphically such operations correspond to vertices of arity $l$. 
As a conclusion, to orient a graph $G\in \MODL((k))$ we need to order its vertices, where a vertex $v$  of valence $|v|$ is considered as element of degree $|v|-3$, and for each vertex to order edges adjacent to it. Changing the order of adjacent edges at any vertex gives the sign of permutation; changing the order of vertices gives the \textit{Koszul sign of permutation} (that is, the sign of permutation that takes  into account the degree of elements.) Now when we look at a graph $G\in M(P_3^k)$,
it is oriented by ordering the set of its vertices (considered as elements of degree $-3$) and edges
 (considered as elements of degree $2$, therefore their placement in the orientation set can be ignored), and by orienting
 all edges. Changing orientation of an edge gives a negative sign. Now, we replace each edge in the orientation set by its two 
 half-edges in the order -- first source, second target. Then we change the order of the elements in the orientation set so that
 the vertices and adjacent to it half-edges come in one block -- first the vertex than half-edges. The combined block 
 corresponding to any vertex $v$  has degree exactly $|v|-3$. Notice however, that $k$ half-edges, corresponding to the
 external vertices, don\rq{}t appear in any such block. These half-edges get annihilated with $\Sigma^{-k}sign$ in~\eqref{eq:L_infty}.
 
 For this argument one should consider separately the case of the identity element $id\in\MODL((0,2))$. It is described as
 a graph that has two external vetrices connected by an edge (no internal vertices). The same graph in $M_0(P_3^2)$ has degree
 two and enjoys the sign action of $\Sigma_2$. The degree and sign shift exactly correspond to the statement of the lemma.
 
 The fact that the differentials agree, which in both cases are sums of expansions of vertices, is a tedious, but straightforward check.
\end{proof}

One has a similar description for the components of $\MODLdet((\bullet))$ in terms of $M(P_2^\bullet)$.

\begin{lem}\label{l:L_infty_det}
 For any $k\geq 1$, 
$\MODLdet((k))\cong_{\Sigma_k} \Sigma^{-1}M(P_2^k).$
\end{lem}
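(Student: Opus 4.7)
The plan is to follow the strategy of the proof of Lemma~\ref{l:L_infty}, adapted from odd $d=3$ to even $d=2$ and incorporating the $\Det$-twist on the modular envelope side. Both $\MODLdet((k))$ and $\Sigma^{-1}M(P_2^k)$ are combinatorially spanned by the same connected graphs with $k$ labelled external vertices of valence $1$ and internal vertices of valence $\geq 3$. The first step is to match degrees: for such a graph $G$ with $E$ edges and $V$ internal vertices, $M(P_2^k)$ assigns degree $(d-1)E - dV = E - 2V$, while $\MODLdet((k))$ assigns $\sum_v(|v|-3) - \beta_1(G) = (2E-k) - 3V - (E-V-k+1) = E - 2V - 1$, so the $\Sigma^{-1}$ shift exactly compensates.

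The second step is to match orientations. In $M(P_2^k)$, an orientation is a Koszul ordering of the set consisting of edges (of degree $1$) and internal vertices (of degree $-2$), with edge orientations irrelevant since $(-1)^d=1$. In $\MODLdet((k))$, an orientation consists of the sign action of $\Sigma_{|v|}$ on the half-edges adjacent to each internal vertex $v$ coming from ${\mathcal L}_\infty((|v|))$, a Koszul ordering of internal vertices in the tensor product $\bigotimes_v {\mathcal L}_\infty((|v|))$ with each factor of degree $|v|-3$, together with a choice of generator of $\Det H_1(G)$. To reconcile these I would use the canonical isomorphism
\[
\Det H_1(G)\;\cong\;\Det E(G)\otimes \Det V(G)^{-1}\otimes \Det H_0(G)^{-1}
\]
coming from the exact sequence $0\to H_1(G)\to C_1(G)\to C_0(G)\to H_0(G)\to 0$ of cellular chains (with $H_0(G)\cong\mathbb{Q}$). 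This lets me re-express the $\Det H_1$ contribution as an edge ordering together with a vertex ordering, so combining with the half-edge data at each vertex yields precisely the $M(P_2^k)$ orientation, up to the degree $+1$ shift from $\Det H_0^{-1}$, which is absorbed by the $\Sigma^{-1}$ in the lemma.

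The hard part will be the careful sign bookkeeping needed to check that the $\mathrm{Aut}(G)$-action, and in particular the external $\Sigma_k$-action, matches on both sides. A permutation $\sigma\in\Sigma_k$ acts on $M(P_2^k)$ at $G$ by the sign of the induced permutation of the odd-degree edges; on $\MODLdet((k))$, the same $\sigma$ acts through the sign action on half-edges at each internal vertex, Koszul signs from reordering internal vertices, and the action on $\Det H_1(G)$, which under the exact sequence isomorphism splits into signs on $\Det E(G)$ and $\Det V(G)^{-1}$. Tracking these contributions in parallel, and using that each internal vertex of valence $|v|$ contributes both a factor of degree $|v|-3$ and a permutation sign on its adjacent half-edges, should produce cancellation down to the single edge-permutation sign. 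Finally, the differentials on both sides are sums of expansions of internal vertices and are automatically compatible, and one verifies separately the identity element in $(g,n)=(0,2)$: for the single-edge graph both sides give a one-dimensional space in degree $0$ with trivial $\Sigma_2$ action, consistent with the convention on ${\mathcal L}_\infty((2))$ already used in Lemma~\ref{l:L_infty}.
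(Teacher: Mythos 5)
Your outline follows essentially the same route as the paper: identify the underlying graphs, check that the degree conventions differ by exactly one, and reconcile the orientation torsors by trading $\Det H_1(G)$ for determinants of cellular chains via the four-term exact sequence, deferring the same ``tedious but straightforward'' sign check on the $Aut_G$-action. The paper organizes this slightly differently (it reuses Lemma~\ref{l:L_infty} to relate everything to $M(P_3^k)$ and works with the internal chain complex $C_1^{int}(G)\to C_0^{int}(G)$ rather than the full cellular complex), but that is only a presentational difference: the external edge and external vertex contributions cancel against each other in degree and in sign.

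There is, however, one concrete error in your key identity. From the exact sequence $0\to H_1(G)\to C_1(G)\to C_0(G)\to H_0(G)\to 0$ the multiplicativity of $\Det$ gives
\[
\Det\, H_1(G)\;\cong\;\Det\, C_1(G)\otimes \bigl(\Det\, C_0(G)\bigr)^{-1}\otimes \Det\, H_0(G),
\]
with $\Det\, H_0(G)\cong\Sigma^{-1}\qbb$ for connected $G$ --- not $\Det\, H_0(G)^{-1}$ as you wrote. As stated, your isomorphism is off by $\Det\, H_0(G)^{\otimes 2}$, i.e.\ by a degree $-2$ twist, and your subsequent remark that ``the degree $+1$ shift from $\Det H_0^{-1}$ is absorbed by the $\Sigma^{-1}$'' contradicts your own (correct) degree count, which shows the $\MODLdet$-degree is $E-2V-1$ and therefore needs a $-1$ shift. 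The fix is immediate: the factor $\Det\, H_0(G)\cong\Sigma^{-1}\qbb$ on the correct side of the identity is precisely what produces the desuspension $\Sigma^{-1}$ in the statement of the lemma, exactly as in the paper's chain $or_\Det(G)=or(G)\cdot\Det H_1(G)=or(G)\cdot\Det C_1^{int}(G)\cdot(\Det C_0^{int}(G))^{-1}\cdot\Det H_0(G)$. With that correction the argument goes through as you describe.
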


\begin{proof}
Similarly to the previous lemma, we need to check that the signs and gradings agree. 

Forgetting the differential, ${\mathcal L}_\infty$ is a free cyclic operad generated by a sequence of one-dimensional vector spaces. As a consequence, $\MODL$ and its twisted version $\MODLdet$ are also free (twisted) modular operads
generated by the same sequence viewed as a stable collection concentrated in genus $g=0$. Thus for any stable graph $G$,
for which $g|_{V(G)}\equiv 0$, there corresponds exactly one graph in $\MODL$ and $\MODLdet$, that, abusing notation,
we also denote by~$G$. Recall that $Aut_G$ is the group of symmetries of~$G$. Denote by $Aut_G^{int}$ its subgroup
of elements fixing external vertices of $G$ pointwise. 

Denote by $or(G)$, respectively $or_\Det(G)$, the one dimensional sign representation of $Aut_G$ that describes how the sign
of $G$ in $\MODL$, respectively $\MODLdet$, changes when the symmetries get applied. One obviously has that 
$G=0$ in $\MODL$, respectively $\MODLdet$, if the restriction of this sign representation on $Aut_G^{int}$ is 
non-trivial. We consider $or(G)$ and $or_\Det(G)$ as one-dimensional graded vector spaces concentrated in the degree
of~$G$, which is $|G|=\sum_{v\in V(G)}(|v|-3)$ for $or(G)$ and $|G|=\sum_{v\in V(G)}(|v|-3)-\beta_1(G)$ for
$or_\Det(G)$. 

One can view $G$ as an element of $M(P_2^k)$, respectively $M(P_3^k)$. One similarly defines $or_2(G)$ and $or_3(G)$
-- the corresponding graded one-dimensional $Aut_G$-modules.

To prove the lemma, we should show that $or_\Det(G)=\Sigma^{-1}or_2(G)$. 

\begin{itemize}
\item Let $C_0^{int}(G)$ denote the vector space of cellular 0-chains spanned by the internal vertices of $G$.
\item Let $C_1^{int}(G)$ denote the vector space of cellular 1-chains spanned by the internal edges of $G$. Here as usual, 
one considers oriented edges; changing orientation implies change in sign: $\overrightarrow{e}=-\overleftarrow{e}$.
\item Let $C_1^{ext}(G)$ denote the vector space of cellular 1-chains spanned by the external edges of $G$.
\item We also set $C_1(G)=C_1^{int}(G)\oplus C_1^{ext}(G)$.
\end{itemize}
All these spaces are viewed as $Aut_G$-modules concentrated in degree zero. One has an exact sequence of $Aut_G$-modules:

\begin{equation} \label{eq:exact}
0\leftarrow H_0(G)\leftarrow C_0^{int}(G) \stackrel{\partial}{\leftarrow} C_1^{int}(G)\leftarrow H_1(G)\leftarrow 0.
\end{equation}

From Lemma~\ref{l:L_infty}, one has
$
or(G)=or_3(G)\cdot\Det\, C_1^{ext}(G).
$
From the definition of $M(P_d^k)$, one can easily get
$
or_2(G)=or_3(G)\cdot \Det\, C_1(G)\cdot (\Det\, C_0^{int}(G))^{-1}.
$
From the definition of the $\Det$-cocycle, and using the fact that $\MODL$ and $\MODLdet$ are free, one has
\[
or_\Det(G)=or(G)\cdot\Det\, H_1(G)=or(G)\cdot\Det\, C_1^{int}(G)\cdot(\Det\, C_0^{int}(G))^{-1}\cdot\Det\, H_0(G).
\]
The last equation follows from~\eqref{eq:exact} and~\eqref{eq:det_prod}. 

From the three identities above, we get
\begin{multline*}
or_\Det(G)=\Sigma^{-1} or(G)\cdot\Det\, C_1^{int}(G)\cdot(\Det\, C_0^{int}(G))^{-1}
= \\
\Sigma^{-1}or_3(G)\cdot\Det\, C_1(G)\cdot(\Det\, C_0^{int}(G))^{-1}=
\Sigma^{-1}or_2(G).
\end{multline*}
The case of the identity element $id\in\MODLdet((0,2))$ should be considered separately. The corresponding
graph in $M_0(P_2^2)$ has degree~1 and trivial action of $\Sigma_2$, which is compatible with the statement of the lemma.
\end{proof}

For $d>3$, the components $M_g(P_d^k)$ can also be expressed in terms of $\MODL((g,k))$ and $\MODLdet((g,k))$ using some regrading, which follows from the isomorphism
$M_g(P_{d+2}^k)\simeq_{\Sigma_k}\Sigma^{2(g-1)+2k}M_g(P_d^k).$

The lemma below is aimed to the readers more familiar with the Feynman transforms rather than with the modular envelope. 

For a stable collection $M=\{M((g,n)),\, g\geq 0,\, n\geq 0\}$, one denotes by $\Sigma M$ the objectwise suspension 
$\Sigma M=\{\Sigma M((g,n)),\, g\geq 0,\, n\geq 0\}$, and one denotes by $\mathfrak{s}M$ its modular operadic suspension
\[
\mathfrak{s}M=\{\mathfrak{s}M((g,n)),\,g\geq 0,\, n\geq 0\}=\{\Sigma^{-2(g-1)-n}M((g,n))\otimes sign,\,g\geq 0,\, n\geq 0\}.
\]

\begin{lem}\label{l:fcom}
 The modular envelopes of ${\mathcal L}ie$ and the Feynman transforms  of ${\mathcal C}om$ are related to each other by the following regrading: $\FCOMdet=\Sigma\mathfrak{s}\MODL$,
$\FCOM=\Sigma\mathfrak{s}\MODLdet$.
\end{lem}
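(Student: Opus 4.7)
The plan is to realize both sides of each equality as graph complexes built on the same combinatorial family of stable graphs, and reduce the lemma to a sign-and-degree bookkeeping, using Lemmas~\ref{l:L_infty} and~\ref{l:L_infty_det} as a bridge.

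First I would recall the graph-complex description of $\FCOM$ and $\FCOMdet$ from \cite[Section~5]{getzler_kapranov98}. Since ${\mathcal C}om((g,n))$ is one-dimensional with trivial $\Sigma_n$-action, the Feynman transform produces a graph complex spanned by stable graphs with internal vertices of valence $\geq 3$, oriented by the appropriate Feynman cocycle. These are exactly the same underlying graphs that appear in $\MODL((\bullet))$ and $\MODLdet((\bullet))$ by freeness of the modular envelope on the cyclic generators of ${\mathcal L}_\infty$, and they also parametrize the complexes $M(P_d^\bullet)$ appearing in Lemmas~\ref{l:L_infty} and~\ref{l:L_infty_det}.

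Second, I would match the gradings. Using the identities $|V(G)|-|E(G)|=1-g$ for any connected graph $G\in\Gamma((g,n))$ and $\sum_{v\in V(G)}|v|=2|E(G)|+n$, any shift by a fixed constant per vertex or per edge can be rewritten as a shift by a linear combination of $g$, $n$, $|V(G)|$ and a constant. A direct computation along these lines shows that the per-vertex degree difference between the ${\mathcal L}_\infty$-generators (in degree $|v|-3$ with sign action of $\Sigma_{|v|}$) and the ${\mathcal C}om$-generators (in degree zero with trivial action), summed over $V(G)$ and corrected by the edge contribution coming from the Feynman cocycle, reproduces exactly the formula $\Sigma\mathfrak{s}P((g,n))=\Sigma^{-2(g-1)-n+1}P((g,n))\otimes sign$. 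This yields the identification $\FCOMdet=\Sigma\mathfrak{s}\MODL$.

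The third step is to carry out the analogous matching for $\FCOM=\Sigma\mathfrak{s}\MODLdet$. The argument is parallel: both sides acquire an extra factor of $\Det\, H_1(G)$ compared to the untwisted case, and this twist commutes with the objectwise regrading by $\Sigma\mathfrak{s}$ since the latter depends only on $(g,n)$ and not on the detailed structure of the graph. The main obstacle throughout is tracking Koszul signs through the graph combinatorics; the cleanest way is to imitate the argument in the proof of Lemma~\ref{l:L_infty_det}, using the short exact sequence~\eqref{eq:exact} to convert between determinants of edge spaces, vertex spaces and $H_1(G)$, after which the sign comparison becomes essentially formal.
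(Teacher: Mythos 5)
Your plan is correct, but it is a genuinely different argument from the one in the paper. The paper's proof is a three-line conceptual one entirely at the level of twisting cocycles: it records that $\MODLdet$ is the \emph{free} $\Det$-twisted modular operad generated by $\Sigma^{-1}\mathfrak{s}^{-1}{\mathcal C}om$, that the regradings $\Sigma$ and $\mathfrak{s}$ correspond to cocycles $\mathfrak{D}_\Sigma$ and $\mathfrak{D}_{\mathfrak{s}}$, and that therefore $\Sigma\mathfrak{s}\MODLdet$ is the free $(\Det\cdot\mathfrak{D}_\Sigma\cdot\mathfrak{D}_{\mathfrak{s}}=\mathfrak{K})$-twisted modular operad on ${\mathcal C}om((\bullet))$ --- which \emph{is} the definition of $\FCOM$ (and likewise with $\Det^{-1}$ for the other identity). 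That route never opens up the graphs at all; all sign and degree bookkeeping is absorbed into the hyperoperad formalism of~\cite{getzler_kapranov98}. Your route instead realizes both sides as graph complexes on the same stable graphs and matches orientations by hand, exactly as the paper does for Lemmas~\ref{l:L_infty} and~\ref{l:L_infty_det}; your degree count is right (using $|E|=|V|-1+g$ one checks that the $\MODLdet$-degree shifted by $-2(g-1)-n+1$ equals minus the number of internal edges, i.e.\ the $\mathfrak{K}$-degree), and the determinant juggling via~\eqref{eq:exact} is the correct tool for the signs. What the paper's approach buys is brevity and the fact that the differentials match for free (the cobar/Feynman differential dual to the ${\mathcal C}om$ compositions is tautologically the vertex-expansion differential of the free construction); what yours buys is an explicit dictionary of orientations that is reusable for the $M(P_d^\bullet)$ comparison. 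The one point you should make explicit is the differential: you need to say that under your identification the Feynman-transform differential (dual to edge contractions in ${\mathcal C}om$) coincides with the sum of vertex expansions coming from the ${\mathcal L}_\infty$ differential --- this is the same ``tedious but straightforward'' check the paper invokes in Lemma~\ref{l:L_infty}, but your write-up currently omits it entirely.
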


\begin{proof}
To the regradings $\Sigma$ and $\mathfrak{s}$ one assigns the twisting cocycles $\mathfrak{D}_\Sigma$ and 
$\mathfrak{D}_{\mathfrak s}$, respectively, see~\cite{getzler_kapranov98}. One has that $\MODLdet$ is a free $\Det$-twisted modular operad
generated by $\Sigma^{-1}\mathfrak{s}^{-1}{\mathcal C}om$. Thus $\Sigma\mathfrak{s}\MODLdet$ is
a free $(\Det\cdot \mathfrak{D}_\Sigma \cdot\mathfrak{D}_{\mathfrak s}=:\mathfrak{K})$-twisted modular operad generated
by ${\mathcal C}om((\bullet))$, which is exactly the definition of $\FCOM$. Similarly, $\Sigma\mathfrak{s}\MODL$
is a free $(\mathfrak{D}_\Sigma \cdot\mathfrak{D}_{\mathfrak s}=\mathfrak{K}\cdot\Det^{-1})$-twisted modular operad
generated by ${\mathcal C}om((\bullet))$,
which is the definition of $\FCOMdet$.
\end{proof}

\subsubsection{Proof of the theorem}
We concentrate on positive arities. The case of arity zero and a connection to Willwacher-\v{Z}ivkovi\'c\rq{}
computations~\cite{wil_zh} have been explained in the introduction.  
For a symmetric sequence $\{M(n),\, n\geq 0\}$, we denote by $Z^{>0}_M$ the cycle index sum of $\{M(n),\, n\geq 1\}$.

Now we have all ingredients to prove Theorem~\ref{t:Z_L_infty}: Lemmas~\ref{l:L_infty}-\ref{l:L_infty_det} and Theorem~\ref{t:super_tr1}.  In the cycle index sum of the latter result, there is  variable $u$, which is responsible for complexity.   
It follows from~\eqref{eq:L_infty} that
\begin{equation}\label{eq:ch1}
Z^{>0}_{\mathcal{X}\MODL}(u;p_1,p_2,p_3,\ldots)=Z_{\mathcal{X}M(P_3^\bullet)}(u;-p_1,-p_2,-p_3,\ldots).
\end{equation}
Since $\MODL$ is a modular operad, it is more natural to consider the splitting by genus rather than by complexity. We use the variable $\hbar$ as the one responsible for the genus.  
 It follows from~\eqref{eq:compl_gen} 
that we need to make the change of variables $u\leftarrow \hbar$, $p_l\leftarrow \frac{p_l}{\hbar^l}$, and in addition to it, multiply  the result by $\hbar$. (Compare with~\eqref{eq:F_pi_genus}.)  Combining it with~\eqref{eq:ch1}, we get
\[
Z^{>0}_{\mathcal{X}\MODL}(\hbar;p_1,p_2,p_3,\ldots)=\hbar Z_{\mathcal{X}M(P_3^\bullet)}(u\leftarrow \hbar;p_l\leftarrow -\frac{p_l}{\hbar^l},\, l\in \nbb).
\]
To finish the proof of~\eqref{eq:ch_odd} we apply Theorem~\ref{t:super_tr1} for $d=3$. 

To prove~\eqref{eq:ch_even}, we similarly get
\[
Z^{>0}_{\mathcal{X}\MODLdet}(\hbar;p_1,p_2,p_3,\ldots)=-\hbar Z_{\mathcal{X}M(P_2^\bullet)}(u\leftarrow \hbar;p_l\leftarrow \frac{p_l}{\hbar^l},\, l\in \nbb).
\]
And then apply Theorem~\ref{t:super_tr1} for $d=2$. 

%
%


\appendix

\section{Tables of Euler characteristics}  

Here we present results of computer calculations which where produced using Mathematica. Recall the splitting of the complex $\mathcal{E}^{\vec{m}, d}_{\pi}$ from (\ref{splitting_epi}). One can split it again into a direct sum 
$
\mathcal{E}^{\vec{m}, d}_{\pi} = \underset{g \geq 0}{\bigoplus} \underset{\vec{s}, t}{\bigoplus} \mathcal{E}^{\vec{m}, d}_{\pi g, \vec{s}, t}.
$
Let $\chi^{\pi g}_{\vec{s}, t}$ denote the  Euler characteristic of each summand in that splitting. The following tables furnish results of $\chi^{\pi g}_{s_1, s_2, t}$ (that is, in the case $r=2$) for genus $g \in \{0, 1, 2, 3\}$ with $m_1, m_2$ and $d$ odd.  Recall the formula $g+s_1+s_2 = t+1$ from (\ref{eq:compl_gen}). 

\tiny

\begin{table}[ht!]

{\tiny \begin{center}
\begin{tabular}{|c|c|c|c|c|c|c|c|c|c|c|c|c|c|c|c|c|c|c|c|c|c|c|c|c|}
\hline
$t$ &\multicolumn{23}{|c|}{Hodge degree $s_2$}& \\ \hline
  & 0 & 1 & 2 & 3 & 4 & 5 & 6 & 7 & 8 & 9 & 10 & 11 & 12 & 13 & 14 & 15 & 16 & 17 & 18 & 19 & 20 & 21 & 22 & 23 \\ \hline
 1 & \begin{turn}{80}{1}\end{turn} & \begin{turn}{80}{1}\end{turn} & \begin{turn}{80}{1}\end{turn} & \begin{turn}{80}{0}\end{turn} & \begin{turn}{80}{0}\end{turn} & \begin{turn}{80}{0}\end{turn} & \begin{turn}{80}{0}\end{turn} & \begin{turn}{80}{0}\end{turn} & \begin{turn}{80}{0}\end{turn} & \begin{turn}{80}{0}\end{turn} & \begin{turn}{80}{0}\end{turn} & \begin{turn}{80}{0}\end{turn} & \begin{turn}{80}{0}\end{turn} & \begin{turn}{80}{0}\end{turn} & \begin{turn}{80}{0}\end{turn} & \begin{turn}{80}{0}\end{turn} & \begin{turn}{80}{0}\end{turn} & \begin{turn}{80}{0}\end{turn} & \begin{turn}{80}{0}\end{turn} & \begin{turn}{80}{0}\end{turn} & \begin{turn}{80}{0}\end{turn} & \begin{turn}{80}{0}\end{turn} & \begin{turn}{80}{0}\end{turn} & \begin{turn}{80}{0}\end{turn} \\ 
\hline
 2 & \begin{turn}{80}{0}\end{turn} & \begin{turn}{80}{0}\end{turn} & \begin{turn}{80}{0}\end{turn} & \begin{turn}{80}{0}\end{turn} & \begin{turn}{80}{0}\end{turn} & \begin{turn}{80}{0}\end{turn} & \begin{turn}{80}{0}\end{turn} & \begin{turn}{80}{0}\end{turn} & \begin{turn}{80}{0}\end{turn} & \begin{turn}{80}{0}\end{turn} & \begin{turn}{80}{0}\end{turn} & \begin{turn}{80}{0}\end{turn} & \begin{turn}{80}{0}\end{turn} & \begin{turn}{80}{0}\end{turn} & \begin{turn}{80}{0}\end{turn} & \begin{turn}{80}{0}\end{turn} & \begin{turn}{80}{0}\end{turn} & \begin{turn}{80}{0}\end{turn} & \begin{turn}{80}{0}\end{turn} & \begin{turn}{80}{0}\end{turn} & \begin{turn}{80}{0}\end{turn} & \begin{turn}{80}{0}\end{turn} & \begin{turn}{80}{0}\end{turn} & \begin{turn}{80}{0}\end{turn} 
\\ \hline
 3 & \begin{turn}{80}{0}\end{turn} & \begin{turn}{80}{0}\end{turn} & \begin{turn}{80}{1}\end{turn} & \begin{turn}{80}{0}\end{turn} & \begin{turn}{80}{0}\end{turn} & \begin{turn}{80}{0}\end{turn} & \begin{turn}{80}{0}\end{turn} & \begin{turn}{80}{0}\end{turn} & \begin{turn}{80}{0}\end{turn} & \begin{turn}{80}{0}\end{turn} & \begin{turn}{80}{0}\end{turn} & \begin{turn}{80}{0}\end{turn} & \begin{turn}{80}{0}\end{turn} & \begin{turn}{80}{0}\end{turn} & \begin{turn}{80}{0}\end{turn} & \begin{turn}{80}{0}\end{turn} & \begin{turn}{80}{0}\end{turn} & \begin{turn}{80}{0}\end{turn} & \begin{turn}{80}{0}\end{turn} & \begin{turn}{80}{0}\end{turn} & \begin{turn}{80}{0}\end{turn} & \begin{turn}{80}{0}\end{turn} & \begin{turn}{80}{0}\end{turn} & \begin{turn}{80}{0}\end{turn} 
\\ \hline
 4 & \begin{turn}{80}{0}\end{turn} & \begin{turn}{80}{0}\end{turn} & \begin{turn}{80}{0}\end{turn} & \begin{turn}{80}{0}\end{turn} & \begin{turn}{80}{0}\end{turn} & \begin{turn}{80}{0}\end{turn} & \begin{turn}{80}{0}\end{turn} & \begin{turn}{80}{0}\end{turn} & \begin{turn}{80}{0}\end{turn} & \begin{turn}{80}{0}\end{turn} & \begin{turn}{80}{0}\end{turn} & \begin{turn}{80}{0}\end{turn} & \begin{turn}{80}{0}\end{turn} & \begin{turn}{80}{0}\end{turn} & \begin{turn}{80}{0}\end{turn} & \begin{turn}{80}{0}\end{turn} & \begin{turn}{80}{0}\end{turn} & \begin{turn}{80}{0}\end{turn} & \begin{turn}{80}{0}\end{turn} & \begin{turn}{80}{0}\end{turn} & \begin{turn}{80}{0}\end{turn} & \begin{turn}{80}{0}\end{turn} & \begin{turn}{80}{0}\end{turn} & \begin{turn}{80}{0}\end{turn} 
\\ \hline
 5 & \begin{turn}{80}{0}\end{turn} & \begin{turn}{80}{0}\end{turn} & \begin{turn}{80}{1}\end{turn} & \begin{turn}{80}{1}\end{turn} & \begin{turn}{80}{1}\end{turn} & \begin{turn}{80}{0}\end{turn} & \begin{turn}{80}{0}\end{turn} & \begin{turn}{80}{0}\end{turn} & \begin{turn}{80}{0}\end{turn} & \begin{turn}{80}{0}\end{turn} & \begin{turn}{80}{0}\end{turn} & \begin{turn}{80}{0}\end{turn} & \begin{turn}{80}{0}\end{turn} & \begin{turn}{80}{0}\end{turn} & \begin{turn}{80}{0}\end{turn} & \begin{turn}{80}{0}\end{turn} & \begin{turn}{80}{0}\end{turn} & \begin{turn}{80}{0}\end{turn} & \begin{turn}{80}{0}\end{turn} & \begin{turn}{80}{0}\end{turn} & \begin{turn}{80}{0}\end{turn} & \begin{turn}{80}{0}\end{turn} & \begin{turn}{80}{0}\end{turn} & \begin{turn}{80}{0}\end{turn} \\ 
\hline
 6 &\begin{turn}{80}{0}\end{turn} &  \begin{turn}{80}{0}\end{turn} & \begin{turn}{80}{0}\end{turn} & \begin{turn}{80}{0}\end{turn} & \begin{turn}{80}{0}\end{turn} & \begin{turn}{80}{0}\end{turn} & \begin{turn}{80}{0}\end{turn} & \begin{turn}{80}{0}\end{turn} & \begin{turn}{80}{0}\end{turn} & \begin{turn}{80}{0}\end{turn} & \begin{turn}{80}{0}\end{turn} & \begin{turn}{80}{0}\end{turn} & \begin{turn}{80}{0}\end{turn} & \begin{turn}{80}{0}\end{turn} & \begin{turn}{80}{0}\end{turn} & \begin{turn}{80}{0}\end{turn} & \begin{turn}{80}{0}\end{turn} & \begin{turn}{80}{0}\end{turn} & \begin{turn}{80}{0}\end{turn} & \begin{turn}{80}{0}\end{turn} & \begin{turn}{80}{0}\end{turn} & \begin{turn}{80}{0}\end{turn} & \begin{turn}{80}{0}\end{turn} & \begin{turn}{80}{0}\end{turn} \\ 
\hline
 7 & \begin{turn}{80}{0}\end{turn}& \begin{turn}{80}{0}\end{turn} & \begin{turn}{80}{1}\end{turn} & \begin{turn}{80}{1}\end{turn} & \begin{turn}{80}{2}\end{turn} & \begin{turn}{80}{1}\end{turn} & \begin{turn}{80}{1}\end{turn} & \begin{turn}{80}{0}\end{turn} & \begin{turn}{80}{0}\end{turn} & \begin{turn}{80}{0}\end{turn} & \begin{turn}{80}{0}\end{turn} & \begin{turn}{80}{0}\end{turn} & \begin{turn}{80}{0}\end{turn} & \begin{turn}{80}{0}\end{turn} & \begin{turn}{80}{0}\end{turn} & \begin{turn}{80}{0}\end{turn} & \begin{turn}{80}{0}\end{turn} & \begin{turn}{80}{0}\end{turn} & \begin{turn}{80}{0}\end{turn} & \begin{turn}{80}{0}\end{turn} & \begin{turn}{80}{0}\end{turn} & \begin{turn}{80}{0}\end{turn} & \begin{turn}{80}{0}\end{turn} & \begin{turn}{80}{0}\end{turn} \\ 
\hline
 8 & \begin{turn}{80}{0}\end{turn} & \begin{turn}{80}{0}\end{turn} & \begin{turn}{80}{0}\end{turn} & \begin{turn}{80}{1}\end{turn} & \begin{turn}{80}{1}\end{turn} & \begin{turn}{80}{1}\end{turn} & \begin{turn}{80}{1}\end{turn} & \begin{turn}{80}{0}\end{turn} & \begin{turn}{80}{0}\end{turn} & \begin{turn}{80}{0}\end{turn} & \begin{turn}{80}{0}\end{turn} & \begin{turn}{80}{0}\end{turn} & \begin{turn}{80}{0}\end{turn} & \begin{turn}{80}{0}\end{turn} & \begin{turn}{80}{0}\end{turn} & \begin{turn}{80}{0}\end{turn} & \begin{turn}{80}{0}\end{turn} & \begin{turn}{80}{0}\end{turn} & \begin{turn}{80}{0}\end{turn} & \begin{turn}{80}{0}\end{turn} & \begin{turn}{80}{0}\end{turn} & \begin{turn}{80}{0}\end{turn} & \begin{turn}{80}{0}\end{turn} & \begin{turn}{80}{0}\end{turn}  \\ 
\hline
 9 &\begin{turn}{80}{0}\end{turn} & \begin{turn}{80}{0}\end{turn} & \begin{turn}{80}{1}\end{turn} & \begin{turn}{80}{1}\end{turn} & \begin{turn}{80}{3}\end{turn} & \begin{turn}{80}{3}\end{turn} & \begin{turn}{80}{3}\end{turn} & \begin{turn}{80}{1}\end{turn} & \begin{turn}{80}{1}\end{turn} & \begin{turn}{80}{0}\end{turn} & \begin{turn}{80}{0}\end{turn} & \begin{turn}{80}{0}\end{turn} & \begin{turn}{80}{0}\end{turn} & \begin{turn}{80}{0}\end{turn} & \begin{turn}{80}{0}\end{turn} & \begin{turn}{80}{0}\end{turn} & \begin{turn}{80}{0}\end{turn} & \begin{turn}{80}{0}\end{turn} & \begin{turn}{80}{0}\end{turn} & \begin{turn}{80}{0}\end{turn} & \begin{turn}{80}{0}\end{turn} & \begin{turn}{80}{0}\end{turn} & \begin{turn}{80}{0}\end{turn} & \begin{turn}{80}{0}\end{turn} \\ 
\hline
 10 & \begin{turn}{80}{0}\end{turn} & \begin{turn}{80}{0}\end{turn} & \begin{turn}{80}{0}\end{turn} & \begin{turn}{80}{1}\end{turn} & \begin{turn}{80}{2}\end{turn} & \begin{turn}{80}{3}\end{turn} & \begin{turn}{80}{3}\end{turn} & \begin{turn}{80}{2}\end{turn} & \begin{turn}{80}{1}\end{turn} & \begin{turn}{80}{0}\end{turn} & \begin{turn}{80}{0}\end{turn} & \begin{turn}{80}{0}\end{turn} & \begin{turn}{80}{0}\end{turn} & \begin{turn}{80}{0}\end{turn} & \begin{turn}{80}{0}\end{turn} & \begin{turn}{80}{0}\end{turn} & \begin{turn}{80}{0}\end{turn} & \begin{turn}{80}{0}\end{turn} & \begin{turn}{80}{0}\end{turn} & \begin{turn}{80}{0}\end{turn} & \begin{turn}{80}{0}\end{turn} & \begin{turn}{80}{0}\end{turn} & \begin{turn}{80}{0}\end{turn} & \begin{turn}{80}{0}\end{turn}  \\ 
\hline
 11 &  \begin{turn}{80}{0}\end{turn} & \begin{turn}{80}{0}\end{turn} & \begin{turn}{80}{1}\end{turn} & \begin{turn}{80}{2}\end{turn} & \begin{turn}{80}{5}\end{turn} & \begin{turn}{80}{6}\end{turn} & \begin{turn}{80}{9}\end{turn} & \begin{turn}{80}{6}\end{turn} & \begin{turn}{80}{5}\end{turn} & \begin{turn}{80}{2}\end{turn} & \begin{turn}{80}{1}\end{turn} & \begin{turn}{80}{0}\end{turn} & \begin{turn}{80}{0}\end{turn} & \begin{turn}{80}{0}\end{turn} & \begin{turn}{80}{0}\end{turn} & \begin{turn}{80}{0}\end{turn} & \begin{turn}{80}{0}\end{turn} & \begin{turn}{80}{0}\end{turn} & \begin{turn}{80}{0}\end{turn} & \begin{turn}{80}{0}\end{turn} & \begin{turn}{80}{0}\end{turn} & \begin{turn}{80}{0}\end{turn} & \begin{turn}{80}{0}\end{turn} & \begin{turn}{80}{0}\end{turn}  \\ 
\hline
 12 & \begin{turn}{80}{0}\end{turn} & \begin{turn}{80}{0}\end{turn} & \begin{turn}{80}{0}\end{turn} & \begin{turn}{80}{1}\end{turn} & \begin{turn}{80}{3}\end{turn} & \begin{turn}{80}{7}\end{turn} & \begin{turn}{80}{9}\end{turn} & \begin{turn}{80}{9}\end{turn} & \begin{turn}{80}{7}\end{turn} & \begin{turn}{80}{3}\end{turn} & \begin{turn}{80}{1}\end{turn} & \begin{turn}{80}{0}\end{turn} & \begin{turn}{80}{0}\end{turn} & \begin{turn}{80}{0}\end{turn} & \begin{turn}{80}{0}\end{turn} & \begin{turn}{80}{0}\end{turn} & \begin{turn}{80}{0}\end{turn} & \begin{turn}{80}{0}\end{turn} & \begin{turn}{80}{0}\end{turn} & \begin{turn}{80}{0}\end{turn} & \begin{turn}{80}{0}\end{turn} & \begin{turn}{80}{0}\end{turn} & \begin{turn}{80}{0}\end{turn} & \begin{turn}{80}{0}\end{turn}  \\ 
\hline
 13 &\begin{turn}{80}{0}\end{turn} & \begin{turn}{80}{0}\end{turn} & \begin{turn}{80}{1}\end{turn} & \begin{turn}{80}{2}\end{turn} & \begin{turn}{80}{7}\end{turn} & \begin{turn}{80}{11}\end{turn} & \begin{turn}{80}{19}\end{turn} & \begin{turn}{80}{19}\end{turn} & \begin{turn}{80}{19}\end{turn} & \begin{turn}{80}{11}\end{turn} & \begin{turn}{80}{7}\end{turn} & \begin{turn}{80}{2}\end{turn} & \begin{turn}{80}{1}\end{turn} & \begin{turn}{80}{0}\end{turn} & \begin{turn}{80}{0}\end{turn} & \begin{turn}{80}{0}\end{turn} & \begin{turn}{80}{0}\end{turn} & \begin{turn}{80}{0}\end{turn} & \begin{turn}{80}{0}\end{turn} & \begin{turn}{80}{0}\end{turn} & \begin{turn}{80}{0}\end{turn} & \begin{turn}{80}{0}\end{turn} & \begin{turn}{80}{0}\end{turn} & \begin{turn}{80}{0}\end{turn}  \\ 
 \hline
 14  & \begin{turn}{80}{0}\end{turn} & \begin{turn}{80}{0}\end{turn} & \begin{turn}{80}{0}\end{turn} & \begin{turn}{80}{2}\end{turn} & \begin{turn}{80}{5}\end{turn} & \begin{turn}{80}{13}\end{turn} & \begin{turn}{80}{22}\end{turn} & \begin{turn}{80}{28}\end{turn} & \begin{turn}{80}{28}\end{turn} & \begin{turn}{80}{22}\end{turn} & \begin{turn}{80}{13}\end{turn} & \begin{turn}{80}{5}\end{turn} & \begin{turn}{80}{2}\end{turn} & \begin{turn}{80}{0}\end{turn} & \begin{turn}{80}{0}\end{turn} & \begin{turn}{80}{0}\end{turn} & \begin{turn}{80}{0}\end{turn} & \begin{turn}{80}{0}\end{turn} & \begin{turn}{80}{0}\end{turn} & \begin{turn}{80}{0}\end{turn} & \begin{turn}{80}{0}\end{turn} & \begin{turn}{80}{0}\end{turn} & \begin{turn}{80}{0}\end{turn} & \begin{turn}{80}{0}\end{turn}  \\ 
\hline 
 15 & \begin{turn}{80}{0}\end{turn}   &  \begin{turn}{80}{0}\end{turn} & \begin{turn}{80}{1}\end{turn} & \begin{turn}{80}{2}\end{turn} & \begin{turn}{80}{9}\end{turn} & \begin{turn}{80}{18}\end{turn} & \begin{turn}{80}{36}\end{turn} & \begin{turn}{80}{47}\end{turn} & \begin{turn}{80}{58}\end{turn} & \begin{turn}{80}{47}\end{turn} & \begin{turn}{80}{36}\end{turn} & \begin{turn}{80}{18}\end{turn} & \begin{turn}{80}{9}\end{turn} & \begin{turn}{80}{2}\end{turn} & \begin{turn}{80}{1}\end{turn} & \begin{turn}{80}{0}\end{turn} & \begin{turn}{80}{0}\end{turn} & \begin{turn}{80}{0}\end{turn} & \begin{turn}{80}{0}\end{turn} & \begin{turn}{80}{0}\end{turn} & \begin{turn}{80}{0}\end{turn} & \begin{turn}{80}{0}\end{turn} & \begin{turn}{80}{0}\end{turn} & \begin{turn}{80}{0}\end{turn} \\ 
\hline
 16 & \begin{turn}{80}{0}\end{turn} & \begin{turn}{80}{0}\end{turn} & \begin{turn}{80}{0}\end{turn} & \begin{turn}{80}{2}\end{turn} & \begin{turn}{80}{7}\end{turn} & \begin{turn}{80}{21}\end{turn} & \begin{turn}{80}{42}\end{turn} & \begin{turn}{80}{68}\end{turn} & \begin{turn}{80}{85}\end{turn} & \begin{turn}{80}{85}\end{turn} & \begin{turn}{80}{68}\end{turn} & \begin{turn}{80}{42}\end{turn} & \begin{turn}{80}{21}\end{turn} & \begin{turn}{80}{7}\end{turn} & \begin{turn}{80}{2}\end{turn} & \begin{turn}{80}{0}\end{turn} & \begin{turn}{80}{0}\end{turn} & \begin{turn}{80}{0}\end{turn} & \begin{turn}{80}{0}\end{turn} & \begin{turn}{80}{0}\end{turn} & \begin{turn}{80}{0}\end{turn} & \begin{turn}{80}{0}\end{turn} & \begin{turn}{80}{0}\end{turn} & \begin{turn}{80}{0}\end{turn} \\ 
\hline
 17 & \begin{turn}{80}{0}\end{turn} &  \begin{turn}{80}{0}\end{turn} & \begin{turn}{80}{1}\end{turn} & \begin{turn}{80}{3}\end{turn} & \begin{turn}{80}{12}\end{turn} & \begin{turn}{80}{28}\end{turn} & \begin{turn}{80}{66}\end{turn} & \begin{turn}{80}{104}\end{turn} & \begin{turn}{80}{150}\end{turn} & \begin{turn}{80}{160}\end{turn} & \begin{turn}{80}{150}\end{turn} & \begin{turn}{80}{104}\end{turn} & \begin{turn}{80}{66}\end{turn} & \begin{turn}{80}{28}\end{turn} & \begin{turn}{80}{12}\end{turn} & \begin{turn}{80}{3}\end{turn} & \begin{turn}{80}{1}\end{turn} & \begin{turn}{80}{0}\end{turn} & \begin{turn}{80}{0}\end{turn} & \begin{turn}{80}{0}\end{turn} & \begin{turn}{80}{0}\end{turn} & \begin{turn}{80}{0}\end{turn} & \begin{turn}{80}{0}\end{turn} & \begin{turn}{80}{0}\end{turn} \\ \hline
 18 & \begin{turn}{80}{0}\end{turn} &  \begin{turn}{80}{0}\end{turn} & \begin{turn}{80}{0}\end{turn} & \begin{turn}{80}{2}\end{turn} & \begin{turn}{80}{9}\end{turn} & \begin{turn}{80}{32}\end{turn} & \begin{turn}{80}{74}\end{turn} & \begin{turn}{80}{142}\end{turn} & \begin{turn}{80}{214}\end{turn} & \begin{turn}{80}{262}\end{turn} & \begin{turn}{80}{262}\end{turn} & \begin{turn}{80}{214}\end{turn} & \begin{turn}{80}{142}\end{turn} & \begin{turn}{80}{74}\end{turn} & \begin{turn}{80}{32}\end{turn} & \begin{turn}{80}{9}\end{turn} & \begin{turn}{80}{2}\end{turn} & \begin{turn}{80}{0}\end{turn} & \begin{turn}{80}{0}\end{turn} & \begin{turn}{80}{0}\end{turn} & \begin{turn}{80}{0}\end{turn} & \begin{turn}{80}{0}\end{turn} & \begin{turn}{80}{0}\end{turn} & \begin{turn}{80}{0}\end{turn}  \\ \hline
 19 & \begin{turn}{80}{0}\end{turn} &   \begin{turn}{80}{0}\end{turn} & \begin{turn}{80}{1}\end{turn} & \begin{turn}{80}{3}\end{turn} & \begin{turn}{80}{15}\end{turn} & \begin{turn}{80}{41}\end{turn} & \begin{turn}{80}{108}\end{turn} &  \begin{turn}{80}{204}\end{turn}& \begin{turn}{80}{342}\end{turn} & \begin{turn}{80}{442}\end{turn} & \begin{turn}{80}{499}\end{turn} & \begin{turn}{80}{442}\end{turn} & \begin{turn}{80}{342}\end{turn} & \begin{turn}{80}{204}\end{turn} & \begin{turn}{80}{108}\end{turn} & \begin{turn}{80}{41}\end{turn} & \begin{turn}{80}{15}\end{turn} & \begin{turn}{80}{3}\end{turn} & \begin{turn}{80}{1}\end{turn} & \begin{turn}{80}{0}\end{turn} & \begin{turn}{80}{0}\end{turn} & \begin{turn}{80}{0}\end{turn} & \begin{turn}{80}{0}\end{turn} & \begin{turn}{80}{0}\end{turn}  \\ \hline
 20 & \begin{turn}{80}{0}\end{turn} & \begin{turn}{80}{0}\end{turn} & \begin{turn}{80}{0}\end{turn} & \begin{turn}{80}{3}\end{turn} & \begin{turn}{80}{12}\end{turn} & \begin{turn}{80}{46}\end{turn} & \begin{turn}{80}{124}\end{turn} &\begin{turn}{80}{271}\end{turn} & \begin{turn}{80}{474}\end{turn} &\begin{turn}{80}{691}\end{turn} & \begin{turn}{80}{827}\end{turn}&\begin{turn}{80}{827}\end{turn} &\begin{turn}{80}{691}\end{turn} & \begin{turn}{80}{474}\end{turn} & \begin{turn}{80}{271}\end{turn} & \begin{turn}{80}{124}\end{turn} & \begin{turn}{80}{46}\end{turn} & \begin{turn}{80}{12}\end{turn} & \begin{turn}{80}{3}\end{turn} & \begin{turn}{80}{0}\end{turn} & \begin{turn}{80}{0}\end{turn} & \begin{turn}{80}{0}\end{turn} & \begin{turn}{80}{0}\end{turn} & \begin{turn}{80}{0}\end{turn}  \\ \hline
 21  & \begin{turn}{80}{0}\end{turn} & \begin{turn}{80}{0}\end{turn} & \begin{turn}{80}{1}\end{turn}& \begin{turn}{80}{3}\end{turn} & \begin{turn}{80}{18}\end{turn} & \begin{turn}{80}{57}\end{turn} &  \begin{turn}{80}{168}\end{turn} & \begin{turn}{80}{368}\end{turn} & \begin{turn}{80}{707}\end{turn} & \begin{turn}{80}{1075}\end{turn} & \begin{turn}{80}{1419}\end{turn}& \begin{turn}{80}{1527}\end{turn} & \begin{turn}{80}{1419}\end{turn}& \begin{turn}{80}{1075}\end{turn} & \begin{turn}{80}{707}\end{turn} & \begin{turn}{80}{368}\end{turn} & \begin{turn}{80}{168}\end{turn} & \begin{turn}{80}{57}\end{turn} & \begin{turn}{80}{18}\end{turn} & \begin{turn}{80}{3}\end{turn} & \begin{turn}{80}{1}\end{turn} & \begin{turn}{80}{0}\end{turn} & \begin{turn}{80}{0}\end{turn} & \begin{turn}{80}{0}\end{turn}  \\ \hline
 22 & \begin{turn}{80}{0}\end{turn} &  \begin{turn}{80}{0}\end{turn} & \begin{turn}{80}{0}\end{turn} & \begin{turn}{80}{3}\end{turn} & \begin{turn}{80}{15}\end{turn} & \begin{turn}{80}{64}\end{turn} & \begin{turn}{80}{192}\end{turn} & \begin{turn}{80}{477}\end{turn}& \begin{turn}{80}{954}\end{turn}& \begin{turn}{80}{1600}\end{turn} & \begin{turn}{80}{2240}\end{turn}& \begin{turn}{80}{2651}\end{turn}& \begin{turn}{80}{2651}\end{turn} & \begin{turn}{80}{2240}\end{turn} & \begin{turn}{80}{1600}\end{turn} & \begin{turn}{80}{954}\end{turn}& \begin{turn}{80}{477}\end{turn}& \begin{turn}{80}{192}\end{turn} & \begin{turn}{80}{64}\end{turn} & \begin{turn}{80}{15}\end{turn} & \begin{turn}{80}{3}\end{turn} & \begin{turn}{80}{0}\end{turn} & \begin{turn}{80}{0}\end{turn} & \begin{turn}{80}{0}\end{turn} \\ \hline
 23 & \begin{turn}{80}{0}\end{turn} &  \begin{turn}{80}{0}\end{turn} & \begin{turn}{80}{1}\end{turn} & \begin{turn}{80}{4}\end{turn} & \begin{turn}{80}{22}\end{turn}& \begin{turn}{80}{77}\end{turn} &  \begin{turn}{80}{254}\end{turn} & \begin{turn}{80}{627}\end{turn} & \begin{turn}{80}{1353}\end{turn}& \begin{turn}{80}{2371}\end{turn} & \begin{turn}{80}{3586}\end{turn}& \begin{turn}{80}{4522}\end{turn} &  \begin{turn}{80}{4940}\end{turn}&  \begin{turn}{80}{4522}\end{turn} &  \begin{turn}{80}{3586}\end{turn} & \begin{turn}{80}{2371}\end{turn}&  \begin{turn}{80}{1353}\end{turn} & \begin{turn}{80}{627}\end{turn} & \begin{turn}{80}{254}\end{turn} & \begin{turn}{80}{77}\end{turn} & \begin{turn}{80}{22}\end{turn} & \begin{turn}{80}{4}\end{turn} & \begin{turn}{80}{1}\end{turn} & \begin{turn}{80}{0}\end{turn} \\
\hline
\end{tabular}
\end{center}}
\caption{\small Table of Euler characteristics $\chi_{s_1,s_2,t}^{\pi g}$ by genus $g=0$, complexity $t$ and Hodge degree $s_2$ of $\pi_*\overline{\mbox{Emb}}_c(\coprod_{i=1}^2 \rbb^{m_i}, \rdbb) \otimes \qbb$ for $m_1$, $m_2$ and $d$ odd $(s_1=t-s_2+1)$.}
\end{table}

\begin{table}[ht!]

{\tiny \begin{center}
\begin{tabular}{|c|c|c|c|c|c|c|c|c|c|c|c|c|c|c|c|c|c|c|c|c|c|c|c|c|}
\hline
$t$ &\multicolumn{23}{|c|}{Hodge degree $s_2$}& \\ \hline
  & 0 & 1 & 2 & 3 & 4 & 5 & 6 & 7 & 8 & 9 & 10 & 11 & 12 & 13 & 14 & 15 & 16 & 17 & 18 & 19 & 20 & 21 & 22 & 23 \\ \hline
 1 & \begin{turn}{80}{0}\end{turn} & \begin{turn}{80}{0}\end{turn} & \begin{turn}{80}{0}\end{turn} & \begin{turn}{80}{0}\end{turn} & \begin{turn}{80}{0}\end{turn} & \begin{turn}{80}{0}\end{turn} & \begin{turn}{80}{0}\end{turn} & \begin{turn}{80}{0}\end{turn} & \begin{turn}{80}{0}\end{turn} & \begin{turn}{80}{0}\end{turn} & \begin{turn}{80}{0}\end{turn} & \begin{turn}{80}{0}\end{turn} & \begin{turn}{80}{0}\end{turn} & \begin{turn}{80}{0}\end{turn} & \begin{turn}{80}{0}\end{turn} & \begin{turn}{80}{0}\end{turn} & \begin{turn}{80}{0}\end{turn} & \begin{turn}{80}{0}\end{turn} & \begin{turn}{80}{0}\end{turn} & \begin{turn}{80}{0}\end{turn} & \begin{turn}{80}{0}\end{turn} & \begin{turn}{80}{0}\end{turn} & \begin{turn}{80}{0}\end{turn} & \begin{turn}{80}{0}\end{turn}   \\  
   \hline
 2 & \begin{turn}{80}{1}\end{turn} & \begin{turn}{80}{1}\end{turn} & \begin{turn}{80}{1}\end{turn} & \begin{turn}{80}{0}\end{turn} & \begin{turn}{80}{0}\end{turn} & \begin{turn}{80}{0}\end{turn} & \begin{turn}{80}{0}\end{turn} & \begin{turn}{80}{0}\end{turn} & \begin{turn}{80}{0}\end{turn} & \begin{turn}{80}{0}\end{turn} & \begin{turn}{80}{0}\end{turn} & \begin{turn}{80}{0}\end{turn} & \begin{turn}{80}{0}\end{turn} & \begin{turn}{80}{0}\end{turn} & \begin{turn}{80}{0}\end{turn} & \begin{turn}{80}{0}\end{turn} & \begin{turn}{80}{0}\end{turn} & \begin{turn}{80}{0}\end{turn} & \begin{turn}{80}{0}\end{turn} & \begin{turn}{80}{0}\end{turn} & \begin{turn}{80}{0}\end{turn} & \begin{turn}{80}{0}\end{turn} & \begin{turn}{80}{0}\end{turn} & \begin{turn}{80}{0}\end{turn}  \\  
\hline
 3 & \begin{turn}{80}{0}\end{turn} & \begin{turn}{80}{0}\end{turn} & \begin{turn}{80}{0}\end{turn} & \begin{turn}{80}{0}\end{turn} & \begin{turn}{80}{0}\end{turn} & \begin{turn}{80}{0}\end{turn} & \begin{turn}{80}{0}\end{turn} & \begin{turn}{80}{0}\end{turn} & \begin{turn}{80}{0}\end{turn} & \begin{turn}{80}{0}\end{turn} & \begin{turn}{80}{0}\end{turn} & \begin{turn}{80}{0}\end{turn} & \begin{turn}{80}{0}\end{turn} & \begin{turn}{80}{0}\end{turn} & \begin{turn}{80}{0}\end{turn} & \begin{turn}{80}{0}\end{turn} & \begin{turn}{80}{0}\end{turn} & \begin{turn}{80}{0}\end{turn} & \begin{turn}{80}{0}\end{turn} & \begin{turn}{80}{0}\end{turn} & \begin{turn}{80}{0}\end{turn} & \begin{turn}{80}{0}\end{turn} & \begin{turn}{80}{0}\end{turn} & \begin{turn}{80}{0}\end{turn}  \\  
   \hline
 4 & \begin{turn}{80}{1}\end{turn} & \begin{turn}{80}{1}\end{turn}  & \begin{turn}{80}{2}\end{turn}  & \begin{turn}{80}{1}\end{turn} & \begin{turn}{80}{1}\end{turn} & \begin{turn}{80}{0}\end{turn} & \begin{turn}{80}{0}\end{turn} & \begin{turn}{80}{0}\end{turn} & \begin{turn}{80}{0}\end{turn} & \begin{turn}{80}{0}\end{turn} & \begin{turn}{80}{0}\end{turn} & \begin{turn}{80}{0}\end{turn} & \begin{turn}{80}{0}\end{turn} & \begin{turn}{80}{0}\end{turn} & \begin{turn}{80}{0}\end{turn} & \begin{turn}{80}{0}\end{turn} & \begin{turn}{80}{0}\end{turn} & \begin{turn}{80}{0}\end{turn} & \begin{turn}{80}{0}\end{turn} & \begin{turn}{80}{0}\end{turn} & \begin{turn}{80}{0}\end{turn} & \begin{turn}{80}{0}\end{turn} & \begin{turn}{80}{0}\end{turn} & \begin{turn}{80}{0}\end{turn}  \\  
   \hline
 5 & \begin{turn}{80}{0}\end{turn} & \begin{turn}{80}{0}\end{turn} & \begin{turn}{80}{0}\end{turn} & \begin{turn}{80}{0}\end{turn} & \begin{turn}{80}{0}\end{turn} & \begin{turn}{80}{0}\end{turn} & \begin{turn}{80}{0}\end{turn} & \begin{turn}{80}{0}\end{turn} & \begin{turn}{80}{0}\end{turn} & \begin{turn}{80}{0}\end{turn} & \begin{turn}{80}{0}\end{turn} & \begin{turn}{80}{0}\end{turn} & \begin{turn}{80}{0}\end{turn} & \begin{turn}{80}{0}\end{turn} & \begin{turn}{80}{0}\end{turn} & \begin{turn}{80}{0}\end{turn} & \begin{turn}{80}{0}\end{turn} & \begin{turn}{80}{0}\end{turn} & \begin{turn}{80}{0}\end{turn} & \begin{turn}{80}{0}\end{turn} & \begin{turn}{80}{0}\end{turn} & \begin{turn}{80}{0}\end{turn} & \begin{turn}{80}{0}\end{turn} & \begin{turn}{80}{0}\end{turn}  \\ 
    \hline
 6 & \begin{turn}{80}{1}\end{turn} & \begin{turn}{80}{1}\end{turn} & \begin{turn}{80}{3}\end{turn} & \begin{turn}{80}{3}\end{turn} & \begin{turn}{80}{3}\end{turn} & \begin{turn}{80}{1}\end{turn} & \begin{turn}{80}{1}\end{turn} & \begin{turn}{80}{0}\end{turn} & \begin{turn}{80}{0}\end{turn} & \begin{turn}{80}{0}\end{turn} & \begin{turn}{80}{0}\end{turn} & \begin{turn}{80}{0}\end{turn} & \begin{turn}{80}{0}\end{turn} & \begin{turn}{80}{0}\end{turn} & \begin{turn}{80}{0}\end{turn} & \begin{turn}{80}{0}\end{turn} & \begin{turn}{80}{0}\end{turn} & \begin{turn}{80}{0}\end{turn} & \begin{turn}{80}{0}\end{turn} & \begin{turn}{80}{0}\end{turn} & \begin{turn}{80}{0}\end{turn} & \begin{turn}{80}{0}\end{turn} & \begin{turn}{80}{0}\end{turn} & \begin{turn}{80}{0}\end{turn} \\  
   \hline
 7 & \begin{turn}{80}{0}\end{turn} & \begin{turn}{80}{0}\end{turn} & \begin{turn}{80}{0}\end{turn} & \begin{turn}{80}{1}\end{turn} & \begin{turn}{80}{1}\end{turn} & \begin{turn}{80}{0}\end{turn} & \begin{turn}{80}{0}\end{turn} & \begin{turn}{80}{0}\end{turn} & \begin{turn}{80}{0}\end{turn} & \begin{turn}{80}{0}\end{turn} & \begin{turn}{80}{0}\end{turn} & \begin{turn}{80}{0}\end{turn} & \begin{turn}{80}{0}\end{turn} & \begin{turn}{80}{0}\end{turn} & \begin{turn}{80}{0}\end{turn} & \begin{turn}{80}{0}\end{turn} & \begin{turn}{80}{0}\end{turn} & \begin{turn}{80}{0}\end{turn} & \begin{turn}{80}{0}\end{turn} & \begin{turn}{80}{0}\end{turn} & \begin{turn}{80}{0}\end{turn} & \begin{turn}{80}{0}\end{turn} & \begin{turn}{80}{0}\end{turn} & \begin{turn}{80}{0}\end{turn} \\  
   \hline
 8 & \begin{turn}{80}{1}\end{turn} & \begin{turn}{80}{1}\end{turn} & \begin{turn}{80}{4}\end{turn} & \begin{turn}{80}{5}\end{turn} & \begin{turn}{80}{8}\end{turn} & \begin{turn}{80}{5}\end{turn} & \begin{turn}{80}{4}\end{turn} & \begin{turn}{80}{1}\end{turn} & \begin{turn}{80}{1}\end{turn} & \begin{turn}{80}{0}\end{turn} & \begin{turn}{80}{0}\end{turn} & \begin{turn}{80}{0}\end{turn} & \begin{turn}{80}{0}\end{turn} & \begin{turn}{80}{0}\end{turn} & \begin{turn}{80}{0}\end{turn} & \begin{turn}{80}{0}\end{turn} & \begin{turn}{80}{0}\end{turn} & \begin{turn}{80}{0}\end{turn} & \begin{turn}{80}{0}\end{turn} & \begin{turn}{80}{0}\end{turn} & \begin{turn}{80}{0}\end{turn} & \begin{turn}{80}{0}\end{turn} & \begin{turn}{80}{0}\end{turn} & \begin{turn}{80}{0}\end{turn} \\  
     \hline
 9 & \begin{turn}{80}{0}\end{turn} & \begin{turn}{80}{0}\end{turn} & \begin{turn}{80}{0}\end{turn} & \begin{turn}{80}{3}\end{turn} & \begin{turn}{80}{4}\end{turn} & \begin{turn}{80}{4}\end{turn} & \begin{turn}{80}{3}\end{turn} & \begin{turn}{80}{0}\end{turn} & \begin{turn}{80}{0}\end{turn} & \begin{turn}{80}{0}\end{turn} & \begin{turn}{80}{0}\end{turn} & \begin{turn}{80}{0}\end{turn} & \begin{turn}{80}{0}\end{turn} & \begin{turn}{80}{0}\end{turn} & \begin{turn}{80}{0}\end{turn} & \begin{turn}{80}{0}\end{turn} & \begin{turn}{80}{0}\end{turn} & \begin{turn}{80}{0}\end{turn} & \begin{turn}{80}{0}\end{turn} & \begin{turn}{80}{0}\end{turn} & \begin{turn}{80}{0}\end{turn} & \begin{turn}{80}{0}\end{turn} & \begin{turn}{80}{0}\end{turn} & \begin{turn}{80}{0}\end{turn} \\  
    \hline
 10 & \begin{turn}{80}{1}\end{turn} &\begin{turn}{80}{1}\end{turn} & \begin{turn}{80}{5}\end{turn} & \begin{turn}{80}{8}\end{turn} & \begin{turn}{80}{16}\end{turn} & \begin{turn}{80}{16}\end{turn} & \begin{turn}{80}{16}\end{turn} & \begin{turn}{80}{8}\end{turn} & \begin{turn}{80}{5}\end{turn} & \begin{turn}{80}{1}\end{turn} & \begin{turn}{80}{1}\end{turn} & \begin{turn}{80}{0}\end{turn} & \begin{turn}{80}{0}\end{turn} & \begin{turn}{80}{0}\end{turn} & \begin{turn}{80}{0}\end{turn} & \begin{turn}{80}{0}\end{turn} & \begin{turn}{80}{0}\end{turn} & \begin{turn}{80}{0}\end{turn} & \begin{turn}{80}{0}\end{turn} & \begin{turn}{80}{0}\end{turn} & \begin{turn}{80}{0}\end{turn} & \begin{turn}{80}{0}\end{turn} & \begin{turn}{80}{0}\end{turn} & \begin{turn}{80}{0}\end{turn} \\ 
       \hline
 11 & \begin{turn}{80}{0}\end{turn} & \begin{turn}{80}{0}\end{turn} & \begin{turn}{80}{0}\end{turn} & \begin{turn}{80}{5}\end{turn} & \begin{turn}{80}{10}\end{turn} & \begin{turn}{80}{16}\end{turn} & \begin{turn}{80}{16}\end{turn} & \begin{turn}{80}{10}\end{turn} & \begin{turn}{80}{5}\end{turn} & \begin{turn}{80}{0}\end{turn} & \begin{turn}{80}{0}\end{turn} & \begin{turn}{80}{0}\end{turn} & \begin{turn}{80}{0}\end{turn} & \begin{turn}{80}{0}\end{turn} & \begin{turn}{80}{0}\end{turn} & \begin{turn}{80}{0}\end{turn} & \begin{turn}{80}{0}\end{turn} & \begin{turn}{80}{0}\end{turn} & \begin{turn}{80}{0}\end{turn} & \begin{turn}{80}{0}\end{turn} & \begin{turn}{80}{0}\end{turn} & \begin{turn}{80}{0}\end{turn} & \begin{turn}{80}{0}\end{turn} & \begin{turn}{80}{0}\end{turn} \\  
    \hline
 12 & \begin{turn}{80}{1}\end{turn} & \begin{turn}{80}{1}\end{turn} & \begin{turn}{80}{6}\end{turn} & \begin{turn}{80}{12}\end{turn} & \begin{turn}{80}{29}\end{turn} & \begin{turn}{80}{38}\end{turn} & \begin{turn}{80}{50}\end{turn} & \begin{turn}{80}{38}\end{turn} & \begin{turn}{80}{29}\end{turn} & \begin{turn}{80}{12}\end{turn} & \begin{turn}{80}{6}\end{turn} & \begin{turn}{80}{1}\end{turn} & \begin{turn}{80}{1}\end{turn} & \begin{turn}{80}{0}\end{turn} & \begin{turn}{80}{0}\end{turn} & \begin{turn}{80}{0}\end{turn} & \begin{turn}{80}{0}\end{turn} & \begin{turn}{80}{0}\end{turn} & \begin{turn}{80}{0}\end{turn} & \begin{turn}{80}{0}\end{turn} & \begin{turn}{80}{0}\end{turn} & \begin{turn}{80}{0}\end{turn} & \begin{turn}{80}{0}\end{turn} & \begin{turn}{80}{0}\end{turn} \\       \hline
 13 & \begin{turn}{80}{0}\end{turn} & \begin{turn}{80}{0}\end{turn} & \begin{turn}{80}{0}\end{turn} & \begin{turn}{80}{8}\end{turn} & \begin{turn}{80}{20}\end{turn} & \begin{turn}{80}{42}\end{turn} & \begin{turn}{80}{56}\end{turn} & \begin{turn}{80}{56}\end{turn} & \begin{turn}{80}{42}\end{turn} & \begin{turn}{80}{20}\end{turn} & \begin{turn}{80}{8}\end{turn} & \begin{turn}{80}{0}\end{turn} & \begin{turn}{80}{0}\end{turn} & \begin{turn}{80}{0}\end{turn} & \begin{turn}{80}{0}\end{turn} & \begin{turn}{80}{0}\end{turn} & \begin{turn}{80}{0}\end{turn} & \begin{turn}{80}{0}\end{turn} & \begin{turn}{80}{0}\end{turn} & \begin{turn}{80}{0}\end{turn} & \begin{turn}{80}{0}\end{turn} & \begin{turn}{80}{0}\end{turn} & \begin{turn}{80}{0}\end{turn} & \begin{turn}{80}{0}\end{turn} \\     \hline
 14 & \begin{turn}{80}{1}\end{turn} & \begin{turn}{80}{1}\end{turn} & \begin{turn}{80}{7}\end{turn} & \begin{turn}{80}{16}\end{turn} & \begin{turn}{80}{47}\end{turn} & \begin{turn}{80}{79}\end{turn} & \begin{turn}{80}{126}\end{turn} & \begin{turn}{80}{133}\end{turn} & \begin{turn}{80}{126}\end{turn} & \begin{turn}{80}{79}\end{turn} & \begin{turn}{80}{47}\end{turn} & \begin{turn}{80}{16}\end{turn} & \begin{turn}{80}{7}\end{turn} & \begin{turn}{80}{1}\end{turn} & \begin{turn}{80}{1}\end{turn} & \begin{turn}{80}{0}\end{turn} & \begin{turn}{80}{0}\end{turn} & \begin{turn}{80}{0}\end{turn} & \begin{turn}{80}{0}\end{turn} & \begin{turn}{80}{0}\end{turn} & \begin{turn}{80}{0}\end{turn} & \begin{turn}{80}{0}\end{turn} & \begin{turn}{80}{0}\end{turn} & \begin{turn}{80}{0}\end{turn} \\  \hline
 15 & \begin{turn}{80}{0}\end{turn} & \begin{turn}{80}{0}\end{turn} & \begin{turn}{80}{0}\end{turn} & \begin{turn}{80}{12}\end{turn} & \begin{turn}{80}{35}\end{turn} & \begin{turn}{80}{90}\end{turn} & \begin{turn}{80}{150}\end{turn} & \begin{turn}{80}{197}\end{turn} & \begin{turn}{80}{197}\end{turn} & \begin{turn}{80}{150}\end{turn} & \begin{turn}{80}{90}\end{turn} & \begin{turn}{80}{35}\end{turn} & \begin{turn}{80}{12}\end{turn} & \begin{turn}{80}{0}\end{turn} & \begin{turn}{80}{0}\end{turn} & \begin{turn}{80}{0}\end{turn} & \begin{turn}{80}{0}\end{turn} & \begin{turn}{80}{0}\end{turn} & \begin{turn}{80}{0}\end{turn} & \begin{turn}{80}{0}\end{turn} & \begin{turn}{80}{0}\end{turn} & \begin{turn}{80}{0}\end{turn} & \begin{turn}{80}{0}\end{turn} & \begin{turn}{80}{0}\end{turn} \\  \hline
 16 & \begin{turn}{80}{1}\end{turn} & \begin{turn}{80}{1}\end{turn} & \begin{turn}{80}{8}\end{turn} & \begin{turn}{80}{21}\end{turn} & \begin{turn}{80}{72}\end{turn} & \begin{turn}{80}{147}\end{turn} & \begin{turn}{80}{280}\end{turn} & \begin{turn}{80}{375}\end{turn} & \begin{turn}{80}{440}\end{turn} & \begin{turn}{80}{375}\end{turn} & \begin{turn}{80}{280}\end{turn} & \begin{turn}{80}{147}\end{turn} & \begin{turn}{80}{72}\end{turn} & \begin{turn}{80}{21}\end{turn} & \begin{turn}{80}{8}\end{turn} & \begin{turn}{80}{1}\end{turn} & \begin{turn}{80}{1}\end{turn} & \begin{turn}{80}{0}\end{turn} & \begin{turn}{80}{0}\end{turn} & \begin{turn}{80}{0}\end{turn} & \begin{turn}{80}{0}\end{turn} & \begin{turn}{80}{0}\end{turn} & \begin{turn}{80}{0}\end{turn} & \begin{turn}{80}{0}\end{turn} \\  \hline
 17 & \begin{turn}{80}{0}\end{turn} & \begin{turn}{80}{0}\end{turn} & \begin{turn}{80}{0}\end{turn} & \begin{turn}{80}{16}\end{turn} & \begin{turn}{80}{56}\end{turn} & \begin{turn}{80}{168}\end{turn} & \begin{turn}{80}{336}\end{turn} & \begin{turn}{80}{544}\end{turn} & \begin{turn}{80}{680}\end{turn} & \begin{turn}{80}{680}\end{turn} & \begin{turn}{80}{544}\end{turn} & \begin{turn}{80}{336}\end{turn} & \begin{turn}{80}{168}\end{turn} & \begin{turn}{80}{56}\end{turn} & \begin{turn}{80}{16}\end{turn} & \begin{turn}{80}{0}\end{turn} & \begin{turn}{80}{0}\end{turn} & \begin{turn}{80}{0}\end{turn} & \begin{turn}{80}{0}\end{turn} & \begin{turn}{80}{0}\end{turn} & \begin{turn}{80}{0}\end{turn} & \begin{turn}{80}{0}\end{turn} & \begin{turn}{80}{0}\end{turn} & \begin{turn}{80}{0}\end{turn}  \\  \hline
 18 & \begin{turn}{80}{1}\end{turn} & \begin{turn}{80}{1}\end{turn} & \begin{turn}{80}{9}\end{turn} & \begin{turn}{80}{27}\end{turn} & \begin{turn}{80}{104}\end{turn} & \begin{turn}{80}{252}\end{turn} & \begin{turn}{80}{561}\end{turn}&  \begin{turn}{80}{912}\end{turn}& \begin{turn}{80}{1282}\end{turn}& \begin{turn}{80}{1387}\end{turn} & \begin{turn}{80}{1282}\end{turn}& \begin{turn}{80}{912}\end{turn} & \begin{turn}{80}{561}\end{turn} & \begin{turn}{80}{252}\end{turn} & \begin{turn}{80}{104}\end{turn} & \begin{turn}{80}{27}\end{turn} & \begin{turn}{80}{9}\end{turn} & \begin{turn}{80}{1}\end{turn} & \begin{turn}{80}{1}\end{turn} & \begin{turn}{80}{0}\end{turn} & \begin{turn}{80}{0}\end{turn} & \begin{turn}{80}{0}\end{turn} & \begin{turn}{80}{0}\end{turn} & \begin{turn}{80}{0}\end{turn} \\  \hline
 19 & \begin{turn}{80}{0}\end{turn} & \begin{turn}{80}{0}\end{turn} & \begin{turn}{80}{0}\end{turn} & \begin{turn}{80}{21}\end{turn} & \begin{turn}{80}{84}\end{turn} & \begin{turn}{80}{288}\end{turn}& \begin{turn}{80}{672}\end{turn} & \begin{turn}{80}{1284}\end{turn} & \begin{turn}{80}{1926}\end{turn} & \begin{turn}{80}{2368}\end{turn} & \begin{turn}{80}{2368}\end{turn} & \begin{turn}{80}{1926}\end{turn} & \begin{turn}{80}{1284}\end{turn} & \begin{turn}{80}{672}\end{turn} & \begin{turn}{80}{288}\end{turn} & \begin{turn}{80}{84}\end{turn} & \begin{turn}{80}{21}\end{turn} & \begin{turn}{80}{0}\end{turn} & \begin{turn}{80}{0}\end{turn} & \begin{turn}{80}{0}\end{turn} & \begin{turn}{80}{0}\end{turn} & \begin{turn}{80}{0}\end{turn} & \begin{turn}{80}{0}\end{turn} & \begin{turn}{80}{0}\end{turn} \\  \hline
 20 & \begin{turn}{80}{1}\end{turn} & \begin{turn}{80}{1}\end{turn} & \begin{turn}{80}{10}\end{turn} &  \begin{turn}{80}{33}\end{turn}&  \begin{turn}{80}{145}\end{turn} &  \begin{turn}{80}{406}\end{turn}& \begin{turn}{80}{1032}\end{turn} &  \begin{turn}{80}{1980}\end{turn}&  \begin{turn}{80}{3260}\end{turn} & \begin{turn}{80}{4262}\end{turn} &  \begin{turn}{80}{4752}\end{turn} &  \begin{turn}{80}{4262}\end{turn}& \begin{turn}{80}{3260}\end{turn}& \begin{turn}{80}{1980}\end{turn} & \begin{turn}{80}{1032}\end{turn} & \begin{turn}{80}{406}\end{turn} & \begin{turn}{80}{145}\end{turn} & \begin{turn}{80}{33}\end{turn} & \begin{turn}{80}{10}\end{turn} & \begin{turn}{80}{1}\end{turn} & \begin{turn}{80}{1}\end{turn} & \begin{turn}{80}{0}\end{turn} & \begin{turn}{80}{0}\end{turn} & \begin{turn}{80}{0}\end{turn}  \\  \hline
 21 & \begin{turn}{80}{0}\end{turn} &  \begin{turn}{80}{0}\end{turn} & \begin{turn}{80}{0}\end{turn}&  \begin{turn}{80}{27}\end{turn}&  \begin{turn}{80}{120}\end{turn} &  \begin{turn}{80}{462}\end{turn} &  \begin{turn}{80}{1233}\end{turn}&  \begin{turn}{80}{2709}\end{turn}&  \begin{turn}{80}{4740}\end{turn}& \begin{turn}{80}{6895}\end{turn}&  \begin{turn}{80}{8272}\end{turn} &  \begin{turn}{80}{8272}\end{turn}&\begin{turn}{80}{6895}\end{turn}& \begin{turn}{80}{4740}\end{turn}&\begin{turn}{80}{2709}\end{turn}& \begin{turn}{80}{1233}\end{turn} & \begin{turn}{80}{462}\end{turn} & \begin{turn}{80}{120}\end{turn} & \begin{turn}{80}{27}\end{turn} & \begin{turn}{80}{0}\end{turn} & \begin{turn}{80}{0}\end{turn} & \begin{turn}{80}{0}\end{turn} & \begin{turn}{80}{0}\end{turn} & \begin{turn}{80}{0}\end{turn}  \\  
\hline
 22 & \begin{turn}{80}{1}\end{turn} &\begin{turn}{80}{1}\end{turn}& \begin{turn}{80}{11}\end{turn}& \begin{turn}{80}{40}\end{turn}&  \begin{turn}{80}{195}\end{turn}&  \begin{turn}{80}{621}\end{turn} & \begin{turn}{80}{1782}\end{turn}& \begin{turn}{80}{3936}\end{turn}&  \begin{turn}{80}{7440}\end{turn}&  \begin{turn}{80}{11410}\end{turn}& \begin{turn}{80}{14938}\end{turn}&  \begin{turn}{80}{16159}\end{turn} &  \begin{turn}{80}{14938}\end{turn}&  \begin{turn}{80}{11410}\end{turn}& \begin{turn}{80}{7440}\end{turn}&  \begin{turn}{80}{3936}\end{turn}& \begin{turn}{80}{1782}\end{turn}&  \begin{turn}{80}{621}\end{turn} & \begin{turn}{80}{195}\end{turn}& \begin{turn}{80}{40}\end{turn} & \begin{turn}{80}{11}\end{turn} & \begin{turn}{80}{1}\end{turn} & \begin{turn}{80}{1}\end{turn} & \begin{turn}{80}{0}\end{turn} \\  \hline
 23 & \begin{turn}{80}{0}\end{turn} & \begin{turn}{80}{0}\end{turn} & \begin{turn}{80}{0}\end{turn}&  \begin{turn}{80}{33}\end{turn}& \begin{turn}{80}{165}\end{turn}&  \begin{turn}{80}{704}\end{turn}&  \begin{turn}{80}{2112}\end{turn}& \begin{turn}{80}{5247}\end{turn}&  \begin{turn}{80}{10494}\end{turn}&  \begin{turn}{80}{17600}\end{turn}&  \begin{turn}{80}{24640}\end{turn}& \begin{turn}{80}{29162}\end{turn}&  \begin{turn}{80}{29162}\end{turn}& \begin{turn}{80}{24640}\end{turn}& \begin{turn}{80}{17600}\end{turn}& \begin{turn}{80}{10494}\end{turn}&  \begin{turn}{80}{5247}\end{turn}& \begin{turn}{80}{2112}\end{turn}& \begin{turn}{80}{704}\end{turn}& \begin{turn}{80}{165}\end{turn}& \begin{turn}{80}{33}\end{turn}& \begin{turn}{80}{0}\end{turn} & \begin{turn}{80}{0}\end{turn} & \begin{turn}{80}{0}\end{turn} \\  \hline
\end{tabular}
\end{center}}
\caption{\small Table of Euler characteristics $\chi_{s_1,s_2,t}^{\pi g}$ by genus $g=1$, complexity $t$ and Hodge degree $s_2$ of $\pi_*\overline{\mbox{Emb}}_c(\coprod_{i=1}^2 \rbb^{m_i}, \rdbb) \otimes \qbb$ for $m_1$, $m_2$ and $d$ odd $(s_1=t-s_2)$.}
\end{table}

\begin{table}[ht!]

{\tiny \begin{center}
\begin{tabular}{|c|c|c|c|c|c|c|c|c|c|c|c|c|c|c|c|c|c|c|c|c|c|c|c|c|}
\hline
$t$ &\multicolumn{23}{|c|}{Hodge degree $s_2$}& \\ \hline
  & 0 & 1 & 2 & 3 & 4 & 5 & 6 & 7 & 8 & 9 & 10 & 11 & 12 & 13 & 14 & 15 & 16 & 17 & 18 & 19 & 20 & 21 & 22 & 23 \\ \hline
 1 & \begin{turn}{80}{0}\end{turn} & \begin{turn}{80}{0}\end{turn} & \begin{turn}{80}{0}\end{turn} & \begin{turn}{80}{0}\end{turn} & \begin{turn}{80}{0}\end{turn} & \begin{turn}{80}{0}\end{turn} & \begin{turn}{80}{0}\end{turn} & \begin{turn}{80}{0}\end{turn} & \begin{turn}{80}{0}\end{turn} & \begin{turn}{80}{0}\end{turn} & \begin{turn}{80}{0}\end{turn} & \begin{turn}{80}{0}\end{turn} & \begin{turn}{80}{0}\end{turn} & \begin{turn}{80}{0}\end{turn} & \begin{turn}{80}{0}\end{turn} & \begin{turn}{80}{0}\end{turn} & \begin{turn}{80}{0}\end{turn} & \begin{turn}{80}{0}\end{turn} & \begin{turn}{80}{0}\end{turn} & \begin{turn}{80}{0}\end{turn} & \begin{turn}{80}{0}\end{turn} & \begin{turn}{80}{0}\end{turn} & \begin{turn}{80}{0}\end{turn} & \begin{turn}{80}{0}\end{turn}   \\ \hline
 2 & \begin{turn}{80}{-1}\end{turn} & \begin{turn}{80}{-1}\end{turn} & \begin{turn}{80}{0}\end{turn} &\begin{turn}{80}{0}\end{turn}& \begin{turn}{80}{0}\end{turn} & \begin{turn}{80}{0}\end{turn} & \begin{turn}{80}{0}\end{turn}  & \begin{turn}{80}{0}\end{turn}  & \begin{turn}{80}{0}\end{turn}  & \begin{turn}{80}{0}\end{turn}  & \begin{turn}{80}{0}\end{turn}  & \begin{turn}{80}{0}\end{turn}  & \begin{turn}{80}{0}\end{turn}  & \begin{turn}{80}{0}\end{turn}  & \begin{turn}{80}{0}\end{turn}  & \begin{turn}{80}{0}\end{turn}  & \begin{turn}{80}{0}\end{turn}  & \begin{turn}{80}{0}\end{turn}  & \begin{turn}{80}{0}\end{turn}  & \begin{turn}{80}{0}\end{turn}  & \begin{turn}{80}{0}\end{turn}  & \begin{turn}{80}{0}\end{turn}  & \begin{turn}{80}{0}\end{turn}  & \begin{turn}{80}{0}\end{turn}  \\ \hline
 3 & \begin{turn}{80}{1}\end{turn} & \begin{turn}{80}{1}\end{turn}  & \begin{turn}{80}{1}\end{turn}  & \begin{turn}{80}{0}\end{turn}  & \begin{turn}{80}{0}\end{turn}  & \begin{turn}{80}{0}\end{turn}  & \begin{turn}{80}{0}\end{turn}  & \begin{turn}{80}{0}\end{turn}  & \begin{turn}{80}{0}\end{turn}  & \begin{turn}{80}{0}\end{turn}  & \begin{turn}{80}{0}\end{turn}  & \begin{turn}{80}{0}\end{turn}  & \begin{turn}{80}{0}\end{turn}  & \begin{turn}{80}{0}\end{turn}  & \begin{turn}{80}{0}\end{turn}  & \begin{turn}{80}{0}\end{turn}  & \begin{turn}{80}{0}\end{turn}  & \begin{turn}{80}{0}\end{turn}  & \begin{turn}{80}{0}\end{turn}  & \begin{turn}{80}{0}\end{turn}  & \begin{turn}{80}{0}\end{turn}  & \begin{turn}{80}{0}\end{turn}  & \begin{turn}{80}{0}\end{turn}  & \begin{turn}{80}{0}\end{turn} \\   \hline
 4 & \begin{turn}{80}{-1}\end{turn} & \begin{turn}{80}{-1}\end{turn} & \begin{turn}{80}{-1}\end{turn}  & \begin{turn}{80}{-1}\end{turn}  & \begin{turn}{80}{0}\end{turn}  & \begin{turn}{80}{0}\end{turn}  & \begin{turn}{80}{0}\end{turn}  & \begin{turn}{80}{0}\end{turn}  & \begin{turn}{80}{0}\end{turn}  & \begin{turn}{80}{0}\end{turn}  & \begin{turn}{80}{0}\end{turn}  & \begin{turn}{80}{0}\end{turn}  & \begin{turn}{80}{0}\end{turn}  & \begin{turn}{80}{0}\end{turn}  & \begin{turn}{80}{0}\end{turn}  & \begin{turn}{80}{0}\end{turn}  & \begin{turn}{80}{0}\end{turn}  & \begin{turn}{80}{0}\end{turn}  & \begin{turn}{80}{0}\end{turn}  & \begin{turn}{80}{0}\end{turn}  & \begin{turn}{80}{0}\end{turn}  & \begin{turn}{80}{0}\end{turn}  & \begin{turn}{80}{0}\end{turn}  & \begin{turn}{80}{0}\end{turn}   \\ \hline
 5 & \begin{turn}{80}{1}\end{turn} &  \begin{turn}{80}{1}\end{turn} & \begin{turn}{80}{3}\end{turn}  & \begin{turn}{80}{1}\end{turn} & \begin{turn}{80}{1}\end{turn}  & \begin{turn}{80}{0}\end{turn}  & \begin{turn}{80}{0}\end{turn}  & \begin{turn}{80}{0}\end{turn}  & \begin{turn}{80}{0}\end{turn}  & \begin{turn}{80}{0}\end{turn}  & \begin{turn}{80}{0}\end{turn}  & \begin{turn}{80}{0}\end{turn}  & \begin{turn}{80}{0}\end{turn}  & \begin{turn}{80}{0}\end{turn}  & \begin{turn}{80}{0}\end{turn}  & \begin{turn}{80}{0}\end{turn}  & \begin{turn}{80}{0}\end{turn}  & \begin{turn}{80}{0}\end{turn}  & \begin{turn}{80}{0}\end{turn}  & \begin{turn}{80}{0}\end{turn}  & \begin{turn}{80}{0}\end{turn}  & \begin{turn}{80}{0}\end{turn}  & \begin{turn}{80}{0}\end{turn}  & \begin{turn}{80}{0}\end{turn}  \\ 
    \hline
 6 & \begin{turn}{80}{-1}\end{turn} &  \begin{turn}{80}{-1}\end{turn} &  \begin{turn}{80}{-2}\end{turn} &  \begin{turn}{80}{-2}\end{turn} & \begin{turn}{80}{-1}\end{turn}  & \begin{turn}{80}{-1}\end{turn}  & \begin{turn}{80}{0}\end{turn}  & \begin{turn}{80}{0}\end{turn}  & \begin{turn}{80}{0}\end{turn}  & \begin{turn}{80}{0}\end{turn}  & \begin{turn}{80}{0}\end{turn}  & \begin{turn}{80}{0}\end{turn}  & \begin{turn}{80}{0}\end{turn}  & \begin{turn}{80}{0}\end{turn}  & \begin{turn}{80}{0}\end{turn}  & \begin{turn}{80}{0}\end{turn}  & \begin{turn}{80}{0}\end{turn}  & \begin{turn}{80}{0}\end{turn}  & \begin{turn}{80}{0}\end{turn}  & \begin{turn}{80}{0}\end{turn}  & \begin{turn}{80}{0}\end{turn}  & \begin{turn}{80}{0}\end{turn}  & \begin{turn}{80}{0}\end{turn}  & \begin{turn}{80}{0}\end{turn}   \\ \hline
 7 & \begin{turn}{80}{2}\end{turn} &  \begin{turn}{80}{2}\end{turn} &  \begin{turn}{80}{6}\end{turn} &  \begin{turn}{80}{5}\end{turn} &  \begin{turn}{80}{6}\end{turn} &  \begin{turn}{80}{2}\end{turn} & \begin{turn}{80}{2}\end{turn}  & \begin{turn}{80}{0}\end{turn}  & \begin{turn}{80}{0}\end{turn}  & \begin{turn}{80}{0}\end{turn}  & \begin{turn}{80}{0}\end{turn}  & \begin{turn}{80}{0}\end{turn}  & \begin{turn}{80}{0}\end{turn}  & \begin{turn}{80}{0}\end{turn}  & \begin{turn}{80}{0}\end{turn}  & \begin{turn}{80}{0}\end{turn}  & \begin{turn}{80}{0}\end{turn}  & \begin{turn}{80}{0}\end{turn}  & \begin{turn}{80}{0}\end{turn}  & \begin{turn}{80}{0}\end{turn}  & \begin{turn}{80}{0}\end{turn}  & \begin{turn}{80}{0}\end{turn}  & \begin{turn}{80}{0}\end{turn}  & \begin{turn}{80}{0}\end{turn}  \\ \hline
 8 & \begin{turn}{80}{-2}\end{turn} &  \begin{turn}{80}{-2}\end{turn} &  \begin{turn}{80}{-4}\end{turn} &  \begin{turn}{80}{-4}\end{turn} &  \begin{turn}{80}{-4}\end{turn} &  \begin{turn}{80}{-4}\end{turn} &  \begin{turn}{80}{-2}\end{turn} & \begin{turn}{80}{-2}\end{turn}  & \begin{turn}{80}{0}\end{turn}  & \begin{turn}{80}{0}\end{turn}  & \begin{turn}{80}{0}\end{turn}  & \begin{turn}{80}{0}\end{turn}  & \begin{turn}{80}{0}\end{turn}  & \begin{turn}{80}{0}\end{turn}  & \begin{turn}{80}{0}\end{turn}  & \begin{turn}{80}{0}\end{turn}  & \begin{turn}{80}{0}\end{turn}  & \begin{turn}{80}{0}\end{turn}  & \begin{turn}{80}{0}\end{turn}  & \begin{turn}{80}{0}\end{turn}  & \begin{turn}{80}{0}\end{turn}  & \begin{turn}{80}{0}\end{turn}  & \begin{turn}{80}{0}\end{turn}  & \begin{turn}{80}{0}\end{turn}  \\ 
   \hline
 9 & \begin{turn}{80}{2}\end{turn} &  \begin{turn}{80}{3}\end{turn} &  \begin{turn}{80}{10}\end{turn} &  \begin{turn}{80}{11}\end{turn} &  \begin{turn}{80}{18}\end{turn} &  \begin{turn}{80}{11}\end{turn} &  \begin{turn}{80}{10}\end{turn} &  \begin{turn}{80}{3}\end{turn} & \begin{turn}{80}{2}\end{turn}  & \begin{turn}{80}{0}\end{turn}  & \begin{turn}{80}{0}\end{turn}  & \begin{turn}{80}{0}\end{turn}  & \begin{turn}{80}{0}\end{turn}  & \begin{turn}{80}{0}\end{turn}  & \begin{turn}{80}{0}\end{turn}  & \begin{turn}{80}{0}\end{turn}  & \begin{turn}{80}{0}\end{turn}  & \begin{turn}{80}{0}\end{turn}  & \begin{turn}{80}{0}\end{turn}  & \begin{turn}{80}{0}\end{turn}  & \begin{turn}{80}{0}\end{turn}  & \begin{turn}{80}{0}\end{turn}  & \begin{turn}{80}{0}\end{turn}  & \begin{turn}{80}{0}\end{turn}  \\ 
   \hline
 10 & \begin{turn}{80}{-2}\end{turn} &  \begin{turn}{80}{-2}\end{turn} &  \begin{turn}{80}{-6}\end{turn} &  \begin{turn}{80}{-3}\end{turn} &  \begin{turn}{80}{-4}\end{turn} &  \begin{turn}{80}{-4}\end{turn} &  \begin{turn}{80}{-3}\end{turn} &  \begin{turn}{80}{-6}\end{turn} &  \begin{turn}{80}{-2}\end{turn} & \begin{turn}{80}{-2}\end{turn}  & \begin{turn}{80}{0}\end{turn}  & \begin{turn}{80}{0}\end{turn}  & \begin{turn}{80}{0}\end{turn}  & \begin{turn}{80}{0}\end{turn}  & \begin{turn}{80}{0}\end{turn}  & \begin{turn}{80}{0}\end{turn}  & \begin{turn}{80}{0}\end{turn}  & \begin{turn}{80}{0}\end{turn}  & \begin{turn}{80}{0}\end{turn}  & \begin{turn}{80}{0}\end{turn}  & \begin{turn}{80}{0}\end{turn}  & \begin{turn}{80}{0}\end{turn}  & \begin{turn}{80}{0}\end{turn}  & \begin{turn}{80}{0}\end{turn}   \\ 
      \hline
 11 & \begin{turn}{80}{2}\end{turn} &  \begin{turn}{80}{3}\end{turn} &  \begin{turn}{80}{15}\end{turn} &  \begin{turn}{80}{19}\end{turn} &  \begin{turn}{80}{39}\end{turn} &  \begin{turn}{80}{36}\end{turn} &  \begin{turn}{80}{39}\end{turn} &  \begin{turn}{80}{19}\end{turn} &  \begin{turn}{80}{15}\end{turn} &  \begin{turn}{80}{3}\end{turn} & \begin{turn}{80}{2}\end{turn}  & \begin{turn}{80}{0}\end{turn}  & \begin{turn}{80}{0}\end{turn}  & \begin{turn}{80}{0}\end{turn}  & \begin{turn}{80}{0}\end{turn}  & \begin{turn}{80}{0}\end{turn}  & \begin{turn}{80}{0}\end{turn}  & \begin{turn}{80}{0}\end{turn}  & \begin{turn}{80}{0}\end{turn}  & \begin{turn}{80}{0}\end{turn}  & \begin{turn}{80}{0}\end{turn}  & \begin{turn}{80}{0}\end{turn}  & \begin{turn}{80}{0}\end{turn}  & \begin{turn}{80}{0}\end{turn}  \\ 
     \hline
 12 & \begin{turn}{80}{-2}\end{turn} &  \begin{turn}{80}{-2}\end{turn} &  \begin{turn}{80}{-8}\end{turn} &  \begin{turn}{80}{-1}\end{turn} &  \begin{turn}{80}{1}\end{turn} &  \begin{turn}{80}{10}\end{turn} &  \begin{turn}{80}{10}\end{turn} &  \begin{turn}{80}{1}\end{turn} &  \begin{turn}{80}{-1}\end{turn} &  \begin{turn}{80}{-8}\end{turn} &  \begin{turn}{80}{-2}\end{turn} & \begin{turn}{80}{-2}\end{turn}  & \begin{turn}{80}{0}\end{turn}  & \begin{turn}{80}{0}\end{turn}  & \begin{turn}{80}{0}\end{turn}  & \begin{turn}{80}{0}\end{turn}  & \begin{turn}{80}{0}\end{turn}  & \begin{turn}{80}{0}\end{turn}  & \begin{turn}{80}{0}\end{turn}  & \begin{turn}{80}{0}\end{turn}  & \begin{turn}{80}{0}\end{turn}  & \begin{turn}{80}{0}\end{turn}  & \begin{turn}{80}{0}\end{turn}  & \begin{turn}{80}{0}\end{turn} \\ 
     \hline
 13 & \begin{turn}{80}{3}\end{turn} &  \begin{turn}{80}{4}\end{turn} &  \begin{turn}{80}{21}\end{turn} &  \begin{turn}{80}{34}\end{turn} &  \begin{turn}{80}{80}\end{turn} &  \begin{turn}{80}{96}\end{turn} &  \begin{turn}{80}{130}\end{turn} &  \begin{turn}{80}{96}\end{turn} &  \begin{turn}{80}{80}\end{turn} &  \begin{turn}{80}{34}\end{turn} &  \begin{turn}{80}{21}\end{turn} &  \begin{turn}{80}{4}\end{turn} & \begin{turn}{80}{3}\end{turn}  & \begin{turn}{80}{0}\end{turn}  & \begin{turn}{80}{0}\end{turn}  & \begin{turn}{80}{0}\end{turn}  & \begin{turn}{80}{0}\end{turn}  & \begin{turn}{80}{0}\end{turn}  & \begin{turn}{80}{0}\end{turn}  & \begin{turn}{80}{0}\end{turn}  & \begin{turn}{80}{0}\end{turn}  & \begin{turn}{80}{0}\end{turn}  & \begin{turn}{80}{0}\end{turn}  & \begin{turn}{80}{0}\end{turn} \\ 
      \hline
 14 & \begin{turn}{80}{-3}\end{turn} &  \begin{turn}{80}{-3}\end{turn} &  \begin{turn}{80}{-11}\end{turn} &  \begin{turn}{80}{1}\end{turn} &  \begin{turn}{80}{12}\end{turn} &  \begin{turn}{80}{56}\end{turn} &  \begin{turn}{80}{76}\end{turn} &  \begin{turn}{80}{76}\end{turn} &  \begin{turn}{80}{56}\end{turn} &  \begin{turn}{80}{12}\end{turn} &  \begin{turn}{80}{1}\end{turn} &  \begin{turn}{80}{-11}\end{turn} &  \begin{turn}{80}{-3}\end{turn} & \begin{turn}{80}{-3}\end{turn}  & \begin{turn}{80}{0}\end{turn}  & \begin{turn}{80}{0}\end{turn}  & \begin{turn}{80}{0}\end{turn}  & \begin{turn}{80}{0}\end{turn}  & \begin{turn}{80}{0}\end{turn}  & \begin{turn}{80}{0}\end{turn}  & \begin{turn}{80}{0}\end{turn}  & \begin{turn}{80}{0}\end{turn}  & \begin{turn}{80}{0}\end{turn}  & \begin{turn}{80}{0}\end{turn}   \\ 
     \hline
 15 & \begin{turn}{80}{3}\end{turn} &  \begin{turn}{80}{5}\end{turn} &  \begin{turn}{80}{28}\end{turn} &  \begin{turn}{80}{52}\end{turn} &  \begin{turn}{80}{146}\end{turn} &  \begin{turn}{80}{220}\end{turn} &  \begin{turn}{80}{353}\end{turn} &  \begin{turn}{80}{358}\end{turn} &  \begin{turn}{80}{353}\end{turn} &  \begin{turn}{80}{220}\end{turn} &  \begin{turn}{80}{146}\end{turn} &  \begin{turn}{80}{52}\end{turn} &  \begin{turn}{80}{28}\end{turn} &  \begin{turn}{80}{5}\end{turn} & \begin{turn}{80}{3}\end{turn}  & \begin{turn}{80}{0}\end{turn}  & \begin{turn}{80}{0}\end{turn}  & \begin{turn}{80}{0}\end{turn}  & \begin{turn}{80}{0}\end{turn}  & \begin{turn}{80}{0}\end{turn}  & \begin{turn}{80}{0}\end{turn}  & \begin{turn}{80}{0}\end{turn}  & \begin{turn}{80}{0}\end{turn}  & \begin{turn}{80}{0}\end{turn}   \\ 
   \hline
 16 & \begin{turn}{80}{-3}\end{turn} &  \begin{turn}{80}{-3}\end{turn} &  \begin{turn}{80}{-14}\end{turn} &  \begin{turn}{80}{10}\end{turn} &  \begin{turn}{80}{42}\end{turn} &  \begin{turn}{80}{168}\end{turn} &  \begin{turn}{80}{293}\end{turn} &  \begin{turn}{80}{405}\end{turn} &  \begin{turn}{80}{405}\end{turn} &  \begin{turn}{80}{293}\end{turn} &  \begin{turn}{80}{168}\end{turn} &  \begin{turn}{80}{42}\end{turn} &  \begin{turn}{80}{10}\end{turn} &  \begin{turn}{80}{-14}\end{turn} &  \begin{turn}{80}{-3}\end{turn} & \begin{turn}{80}{-3}\end{turn}  & \begin{turn}{80}{0}\end{turn}  & \begin{turn}{80}{0}\end{turn}  & \begin{turn}{80}{0}\end{turn}  & \begin{turn}{80}{0}\end{turn}  & \begin{turn}{80}{0}\end{turn}  & \begin{turn}{80}{0}\end{turn}  & \begin{turn}{80}{0}\end{turn}  & \begin{turn}{80}{0}\end{turn}  \\ 
   \hline
 17 & \begin{turn}{80}{3}\end{turn} &  \begin{turn}{80}{5}\end{turn} &  \begin{turn}{80}{36}\end{turn} &  \begin{turn}{80}{73}\end{turn} &  \begin{turn}{80}{241}\end{turn} &  \begin{turn}{80}{448}\end{turn} &  \begin{turn}{80}{846}\end{turn} &  \begin{turn}{80}{1090}\end{turn} &  \begin{turn}{80}{1300}\end{turn} &  \begin{turn}{80}{1090}\end{turn} &  \begin{turn}{80}{846}\end{turn} &  \begin{turn}{80}{448}\end{turn} &  \begin{turn}{80}{241}\end{turn} &  \begin{turn}{80}{73}\end{turn} &  \begin{turn}{80}{36}\end{turn} &  \begin{turn}{80}{5}\end{turn} & \begin{turn}{80}{3}\end{turn}  & \begin{turn}{80}{0}\end{turn}  & \begin{turn}{80}{0}\end{turn}  & \begin{turn}{80}{0}\end{turn}  & \begin{turn}{80}{0}\end{turn}  & \begin{turn}{80}{0}\end{turn}  & \begin{turn}{80}{0}\end{turn}  & \begin{turn}{80}{0}\end{turn}  \\ 
      \hline
 18 & \begin{turn}{80}{-3}\end{turn} &  \begin{turn}{80}{-3}\end{turn} &  \begin{turn}{80}{-17}\end{turn} &  \begin{turn}{80}{21}\end{turn} &  \begin{turn}{80}{95}\end{turn} &  \begin{turn}{80}{393}\end{turn} &  \begin{turn}{80}{814}\end{turn} &  \begin{turn}{80}{1386}\end{turn} &  \begin{turn}{80}{1742}\end{turn} &  \begin{turn}{80}{1742}\end{turn} &  \begin{turn}{80}{1386}\end{turn} &  \begin{turn}{80}{814}\end{turn} &  \begin{turn}{80}{393}\end{turn} &  \begin{turn}{80}{95}\end{turn} &  \begin{turn}{80}{21}\end{turn} &  \begin{turn}{80}{-17}\end{turn} &  \begin{turn}{80}{-3}\end{turn} & \begin{turn}{80}{-3}\end{turn}  & \begin{turn}{80}{0}\end{turn}  & \begin{turn}{80}{0}\end{turn}  & \begin{turn}{80}{0}\end{turn}  & \begin{turn}{80}{0}\end{turn}  & \begin{turn}{80}{0}\end{turn}  & \begin{turn}{80}{0}\end{turn}  \\ 
    \hline
 19 & \begin{turn}{80}{4}\end{turn} &  \begin{turn}{80}{6}\end{turn} &  \begin{turn}{80}{45}\end{turn} &  \begin{turn}{80}{105}\end{turn} &  \begin{turn}{80}{384}\end{turn} & \begin{turn}{80}{840}\end{turn} & \begin{turn}{80}{1851}\end{turn} & \begin{turn}{80}{2904}\end{turn}&  \begin{turn}{80}{4098}\end{turn} &  \begin{turn}{80}{4370}\end{turn} &  \begin{turn}{80}{4098}\end{turn} &  \begin{turn}{80}{2904}\end{turn}&  \begin{turn}{80}{1851}\end{turn} &  \begin{turn}{80}{840}\end{turn} &  \begin{turn}{80}{384}\end{turn} &  \begin{turn}{80}{105}\end{turn} &  \begin{turn}{80}{45}\end{turn} &  \begin{turn}{80}{6}\end{turn} & \begin{turn}{80}{4}\end{turn}  & \begin{turn}{80}{0}\end{turn}  & \begin{turn}{80}{0}\end{turn}  & \begin{turn}{80}{0}\end{turn}  & \begin{turn}{80}{0}\end{turn}  & \begin{turn}{80}{0}\end{turn}   \\ 
   \hline
 20 & \begin{turn}{80}{-4}\end{turn} &  \begin{turn}{80}{-4}\end{turn} &  \begin{turn}{80}{-21}\end{turn} &  \begin{turn}{80}{33}\end{turn} &  \begin{turn}{80}{174}\end{turn} &  \begin{turn}{80}{792}\end{turn} &  \begin{turn}{80}{1899}\end{turn} & \begin{turn}{80}{3795}\end{turn}& \begin{turn}{80}{5742}\end{turn}&  \begin{turn}{80}{7114}\end{turn} &  \begin{turn}{80}{7114}\end{turn} & \begin{turn}{80}{5742}\end{turn} & \begin{turn}{80}{3795}\end{turn} & \begin{turn}{80}{1899}\end{turn} &  \begin{turn}{80}{792}\end{turn}&  \begin{turn}{80}{174}\end{turn} &  \begin{turn}{80}{33}\end{turn} &  \begin{turn}{80}{-21}\end{turn} &  \begin{turn}{80}{-4}\end{turn} & \begin{turn}{80}{-4}\end{turn}  & \begin{turn}{80}{0}\end{turn}  & \begin{turn}{80}{0}\end{turn}  & \begin{turn}{80}{0}\end{turn}  & \begin{turn}{80}{0}\end{turn}   
   \\ \hline
 21 & \begin{turn}{80}{4}\end{turn} &  \begin{turn}{80}{7}\end{turn} &  \begin{turn}{80}{55}\end{turn} &  \begin{turn}{80}{141}\end{turn} &  \begin{turn}{80}{582}\end{turn} &  \begin{turn}{80}{1473}\end{turn} & \begin{turn}{80}{3702}\end{turn}& \begin{turn}{80}{6885}\end{turn} &  \begin{turn}{80}{11322}\end{turn}&  \begin{turn}{80}{14630}\end{turn} & \begin{turn}{80}{16390}\end{turn}& \begin{turn}{80}{14630}\end{turn}& \begin{turn}{80}{11322}\end{turn}& \begin{turn}{80}{6885}\end{turn}&  \begin{turn}{80}{3702}\end{turn} &  \begin{turn}{80}{1473}\end{turn} &  \begin{turn}{80}{582}\end{turn} &  \begin{turn}{80}{141}\end{turn} &  \begin{turn}{80}{55}\end{turn} &  \begin{turn}{80}{7}\end{turn} & \begin{turn}{80}{4}\end{turn}  & \begin{turn}{80}{0}\end{turn}  & \begin{turn}{80}{0}\end{turn}  & \begin{turn}{80}{0}\end{turn}  \\ 
  \hline
 22 & \begin{turn}{80}{-4}\end{turn} &  \begin{turn}{80}{-4}\end{turn} &  \begin{turn}{80}{-25}\end{turn} &  \begin{turn}{80}{56}\end{turn} & \begin{turn}{80}{300}\end{turn}& \begin{turn}{80}{1452}\end{turn} &  \begin{turn}{80}{3975}\end{turn} & \begin{turn}{80}{9060}\end{turn} & \begin{turn}{80}{15960}\end{turn} & \begin{turn}{80}{23432}\end{turn}& \begin{turn}{80}{28154}\end{turn}& \begin{turn}{80}{28154}\end{turn} & \begin{turn}{80}{23432}\end{turn}& \begin{turn}{80}{15960}\end{turn}&  \begin{turn}{80}{9060}\end{turn} &  \begin{turn}{80}{3975}\end{turn} &  \begin{turn}{80}{1452}\end{turn} &  \begin{turn}{80}{300}\end{turn} &  \begin{turn}{80}{56}\end{turn} &  \begin{turn}{80}{-25}\end{turn} &  \begin{turn}{80}{-4}\end{turn} & \begin{turn}{80}{-4}\end{turn}  & \begin{turn}{80}{0}\end{turn}  & \begin{turn}{80}{0}\end{turn}  \\ 
     \hline
 23  & \begin{turn}{80}{4}\end{turn} &  \begin{turn}{80}{7}\end{turn} &  \begin{turn}{80}{66}\end{turn}& \begin{turn}{80}{181}\end{turn}& \begin{turn}{80}{841}\end{turn} & \begin{turn}{80}{2442}\end{turn}& \begin{turn}{80}{6912}\end{turn} & \begin{turn}{80}{14865}\end{turn}& \begin{turn}{80}{28050}\end{turn} & \begin{turn}{80}{42595}\end{turn}& \begin{turn}{80}{55839}\end{turn}&  \begin{turn}{80}{60172}\end{turn}& \begin{turn}{80}{55839}\end{turn}&  \begin{turn}{80}{42595}\end{turn} &  \begin{turn}{80}{28050}\end{turn}&  \begin{turn}{80}{14865}\end{turn} & \begin{turn}{80}{6912}\end{turn} &  \begin{turn}{80}{2442}\end{turn}&  \begin{turn}{80}{841}\end{turn} &  \begin{turn}{80}{181}\end{turn} &  \begin{turn}{80}{66}\end{turn} &  \begin{turn}{80}{7}\end{turn} & \begin{turn}{80}{4}\end{turn}  & \begin{turn}{80}{0}\end{turn} \\ \hline
\end{tabular}
\end{center}}
\caption{\small Table of Euler characteristics $\chi_{s_1,s_2,t}^{\pi g}$ by genus $g=2$, complexity $t$ and Hodge degree $s_2$ of $\pi_*\overline{\mbox{Emb}}_c(\coprod_{i=1}^2 \rbb^{m_i}, \rdbb) \otimes \qbb$ for $m_1$, $m_2$ and $d$ odd $(s_1=t-s_2-1)$.}
\end{table}

\begin{table}[ht!]

{\tiny \begin{center}
\begin{tabular}{|c|c|c|c|c|c|c|c|c|c|c|c|c|c|c|c|c|c|c|c|c|c|c|c|c|}
\hline
$t$ &\multicolumn{23}{|c|}{Hodge degree $s_2$}& \\ \hline
  & 0 & 1 & 2 & 3 & 4 & 5 & 6 & 7 & 8 & 9 & 10 & 11 & 12 & 13 & 14 & 15 & 16 & 17 & 18 & 19 & 20 & 21 & 22 & 23 \\ \hline
1 & \begin{turn}{80}{0}\end{turn} & \begin{turn}{80}{0}\end{turn} & \begin{turn}{80}{0}\end{turn} & \begin{turn}{80}{0}\end{turn} & \begin{turn}{80}{0}\end{turn} & \begin{turn}{80}{0}\end{turn} & \begin{turn}{80}{0}\end{turn} & \begin{turn}{80}{0}\end{turn} & \begin{turn}{80}{0}\end{turn} & \begin{turn}{80}{0}\end{turn} & \begin{turn}{80}{0}\end{turn} & \begin{turn}{80}{0}\end{turn} & \begin{turn}{80}{0}\end{turn} & \begin{turn}{80}{0}\end{turn} & \begin{turn}{80}{0}\end{turn} & \begin{turn}{80}{0}\end{turn} & \begin{turn}{80}{0}\end{turn} & \begin{turn}{80}{0}\end{turn} & \begin{turn}{80}{0}\end{turn} & \begin{turn}{80}{0}\end{turn} & \begin{turn}{80}{0}\end{turn} & \begin{turn}{80}{0}\end{turn} & \begin{turn}{80}{0}\end{turn} & \begin{turn}{80}{0}\end{turn} \\ 
      \hline
 2 & \begin{turn}{80}{0}\end{turn} & \begin{turn}{80}{0}\end{turn} & \begin{turn}{80}{0}\end{turn} & \begin{turn}{80}{0}\end{turn} & \begin{turn}{80}{0}\end{turn} & \begin{turn}{80}{0}\end{turn} & \begin{turn}{80}{0}\end{turn} & \begin{turn}{80}{0}\end{turn} & \begin{turn}{80}{0}\end{turn} & \begin{turn}{80}{0}\end{turn} & \begin{turn}{80}{0}\end{turn} & \begin{turn}{80}{0}\end{turn} & \begin{turn}{80}{0}\end{turn} & \begin{turn}{80}{0}\end{turn} & \begin{turn}{80}{0}\end{turn} & \begin{turn}{80}{0}\end{turn} & \begin{turn}{80}{0}\end{turn} & \begin{turn}{80}{0}\end{turn} & \begin{turn}{80}{0}\end{turn} & \begin{turn}{80}{0}\end{turn} & \begin{turn}{80}{0}\end{turn} & \begin{turn}{80}{0}\end{turn} & \begin{turn}{80}{0}\end{turn} & \begin{turn}{80}{0}\end{turn}  \\ 
     \hline
 3 & \begin{turn}{80}{-1}\end{turn} &\begin{turn}{80}{-1}\end{turn} & \begin{turn}{80}{0}\end{turn} & \begin{turn}{80}{0}\end{turn} & \begin{turn}{80}{0}\end{turn} & \begin{turn}{80}{0}\end{turn} & \begin{turn}{80}{0}\end{turn} & \begin{turn}{80}{0}\end{turn} & \begin{turn}{80}{0}\end{turn} & \begin{turn}{80}{0}\end{turn} & \begin{turn}{80}{0}\end{turn} & \begin{turn}{80}{0}\end{turn} & \begin{turn}{80}{0}\end{turn} & \begin{turn}{80}{0}\end{turn} & \begin{turn}{80}{0}\end{turn} & \begin{turn}{80}{0}\end{turn} & \begin{turn}{80}{0}\end{turn} & \begin{turn}{80}{0}\end{turn} & \begin{turn}{80}{0}\end{turn} & \begin{turn}{80}{0}\end{turn} & \begin{turn}{80}{0}\end{turn} & \begin{turn}{80}{0}\end{turn} & \begin{turn}{80}{0}\end{turn} & \begin{turn}{80}{0}\end{turn}  \\ 
    \hline
4 & \begin{turn}{80}{1}\end{turn} &  \begin{turn}{80}{1}\end{turn} & \begin{turn}{80}{1}\end{turn} & \begin{turn}{80}{0}\end{turn} & \begin{turn}{80}{0}\end{turn} & \begin{turn}{80}{0}\end{turn} & \begin{turn}{80}{0}\end{turn} & \begin{turn}{80}{0}\end{turn} & \begin{turn}{80}{0}\end{turn} & \begin{turn}{80}{0}\end{turn} & \begin{turn}{80}{0}\end{turn} & \begin{turn}{80}{0}\end{turn} & \begin{turn}{80}{0}\end{turn} & \begin{turn}{80}{0}\end{turn} & \begin{turn}{80}{0}\end{turn} & \begin{turn}{80}{0}\end{turn} & \begin{turn}{80}{0}\end{turn} & \begin{turn}{80}{0}\end{turn} & \begin{turn}{80}{0}\end{turn} & \begin{turn}{80}{0}\end{turn} & \begin{turn}{80}{0}\end{turn} & \begin{turn}{80}{0}\end{turn} & \begin{turn}{80}{0}\end{turn} & \begin{turn}{80}{0}\end{turn}  \\ 
     \hline
 5 & \begin{turn}{80}{-1}\end{turn} & \begin{turn}{80}{-2}\end{turn} & \begin{turn}{80}{-2}\end{turn} & \begin{turn}{80}{-1}\end{turn} & \begin{turn}{80}{0}\end{turn} & \begin{turn}{80}{0}\end{turn} & \begin{turn}{80}{0}\end{turn} & \begin{turn}{80}{0}\end{turn} & \begin{turn}{80}{0}\end{turn} & \begin{turn}{80}{0}\end{turn} & \begin{turn}{80}{0}\end{turn} & \begin{turn}{80}{0}\end{turn} & \begin{turn}{80}{0}\end{turn} & \begin{turn}{80}{0}\end{turn} & \begin{turn}{80}{0}\end{turn} & \begin{turn}{80}{0}\end{turn} & \begin{turn}{80}{0}\end{turn} & \begin{turn}{80}{0}\end{turn} & \begin{turn}{80}{0}\end{turn} & \begin{turn}{80}{0}\end{turn} & \begin{turn}{80}{0}\end{turn} & \begin{turn}{80}{0}\end{turn} & \begin{turn}{80}{0}\end{turn} & \begin{turn}{80}{0}\end{turn} \\ 
     \hline 
 6 & \begin{turn}{80}{2}\end{turn} & \begin{turn}{80}{2}\end{turn} & \begin{turn}{80}{5}\end{turn} & \begin{turn}{80}{2}\end{turn} & \begin{turn}{80}{2}\end{turn} & \begin{turn}{80}{0}\end{turn} & \begin{turn}{80}{0}\end{turn} & \begin{turn}{80}{0}\end{turn} & \begin{turn}{80}{0}\end{turn} & \begin{turn}{80}{0}\end{turn} & \begin{turn}{80}{0}\end{turn} & \begin{turn}{80}{0}\end{turn} & \begin{turn}{80}{0}\end{turn} & \begin{turn}{80}{0}\end{turn} & \begin{turn}{80}{0}\end{turn} & \begin{turn}{80}{0}\end{turn} & \begin{turn}{80}{0}\end{turn} & \begin{turn}{80}{0}\end{turn} & \begin{turn}{80}{0}\end{turn} & \begin{turn}{80}{0}\end{turn} & \begin{turn}{80}{0}\end{turn} & \begin{turn}{80}{0}\end{turn} & \begin{turn}{80}{0}\end{turn} & \begin{turn}{80}{0}\end{turn} \\ 
   \hline 
 7 & \begin{turn}{80}{-3}\end{turn} & \begin{turn}{80}{-4}\end{turn} & \begin{turn}{80}{-7}\end{turn} & \begin{turn}{80}{-7}\end{turn} & \begin{turn}{80}{-4}\end{turn} & \begin{turn}{80}{-3}\end{turn} & \begin{turn}{80}{0}\end{turn} & \begin{turn}{80}{0}\end{turn} & \begin{turn}{80}{0}\end{turn} & \begin{turn}{80}{0}\end{turn} & \begin{turn}{80}{0}\end{turn} & \begin{turn}{80}{0}\end{turn} & \begin{turn}{80}{0}\end{turn} & \begin{turn}{80}{0}\end{turn} & \begin{turn}{80}{0}\end{turn} & \begin{turn}{80}{0}\end{turn} & \begin{turn}{80}{0}\end{turn} & \begin{turn}{80}{0}\end{turn} & \begin{turn}{80}{0}\end{turn} & \begin{turn}{80}{0}\end{turn} & \begin{turn}{80}{0}\end{turn} & \begin{turn}{80}{0}\end{turn} & \begin{turn}{80}{0}\end{turn} & \begin{turn}{80}{0}\end{turn} \\ 
     \hline 
 8  & \begin{turn}{80}{4}\end{turn} &  \begin{turn}{80}{5}\end{turn} & \begin{turn}{80}{12}\end{turn} & \begin{turn}{80}{11}\end{turn} & \begin{turn}{80}{12}\end{turn} & \begin{turn}{80}{5}\end{turn} & \begin{turn}{80}{4}\end{turn} & \begin{turn}{80}{0}\end{turn} & \begin{turn}{80}{0}\end{turn} & \begin{turn}{80}{0}\end{turn} & \begin{turn}{80}{0}\end{turn} & \begin{turn}{80}{0}\end{turn} & \begin{turn}{80}{0}\end{turn} & \begin{turn}{80}{0}\end{turn} & \begin{turn}{80}{0}\end{turn} & \begin{turn}{80}{0}\end{turn} & \begin{turn}{80}{0}\end{turn} & \begin{turn}{80}{0}\end{turn} & \begin{turn}{80}{0}\end{turn} & \begin{turn}{80}{0}\end{turn} & \begin{turn}{80}{0}\end{turn} & \begin{turn}{80}{0}\end{turn} & \begin{turn}{80}{0}\end{turn} & \begin{turn}{80}{0}\end{turn} \\ 
       \hline 
 9 & \begin{turn}{80}{-5}\end{turn} & \begin{turn}{80}{-7}\end{turn} & \begin{turn}{80}{-16}\end{turn} & \begin{turn}{80}{-19}\end{turn} & \begin{turn}{80}{-19}\end{turn} & \begin{turn}{80}{-16}\end{turn} & \begin{turn}{80}{-7}\end{turn} & \begin{turn}{80}{-5}\end{turn} & \begin{turn}{80}{0}\end{turn} & \begin{turn}{80}{0}\end{turn} & \begin{turn}{80}{0}\end{turn} & \begin{turn}{80}{0}\end{turn} & \begin{turn}{80}{0}\end{turn} & \begin{turn}{80}{0}\end{turn} & \begin{turn}{80}{0}\end{turn} & \begin{turn}{80}{0}\end{turn} & \begin{turn}{80}{0}\end{turn} & \begin{turn}{80}{0}\end{turn} & \begin{turn}{80}{0}\end{turn} & \begin{turn}{80}{0}\end{turn} & \begin{turn}{80}{0}\end{turn} & \begin{turn}{80}{0}\end{turn} & \begin{turn}{80}{0}\end{turn} & \begin{turn}{80}{0}\end{turn} \\ 
      \hline
 10 & \begin{turn}{80}{6}\end{turn} & \begin{turn}{80}{8}\end{turn} & \begin{turn}{80}{24}\end{turn} & \begin{turn}{80}{24}\end{turn} & \begin{turn}{80}{37}\end{turn} & \begin{turn}{80}{24}\end{turn} & \begin{turn}{80}{24}\end{turn} & \begin{turn}{80}{8}\end{turn} & \begin{turn}{80}{6}\end{turn} & \begin{turn}{80}{0}\end{turn} & \begin{turn}{80}{0}\end{turn} & \begin{turn}{80}{0}\end{turn} & \begin{turn}{80}{0}\end{turn} & \begin{turn}{80}{0}\end{turn} & \begin{turn}{80}{0}\end{turn} & \begin{turn}{80}{0}\end{turn} & \begin{turn}{80}{0}\end{turn} & \begin{turn}{80}{0}\end{turn} & \begin{turn}{80}{0}\end{turn} & \begin{turn}{80}{0}\end{turn} & \begin{turn}{80}{0}\end{turn} & \begin{turn}{80}{0}\end{turn} & \begin{turn}{80}{0}\end{turn} & \begin{turn}{80}{0}\end{turn} \\ 
     \hline
 11 & \begin{turn}{80}{-6}\end{turn} & \begin{turn}{80}{-10}\end{turn} & \begin{turn}{80}{-30}\end{turn} & \begin{turn}{80}{-35}\end{turn} & \begin{turn}{80}{-50}\end{turn} & \begin{turn}{80}{-50}\end{turn} & \begin{turn}{80}{-35}\end{turn} & \begin{turn}{80}{-30}\end{turn} & \begin{turn}{80}{-10}\end{turn} & \begin{turn}{80}{-6}\end{turn} & \begin{turn}{80}{0}\end{turn} & \begin{turn}{80}{0}\end{turn} & \begin{turn}{80}{0}\end{turn} & \begin{turn}{80}{0}\end{turn} & \begin{turn}{80}{0}\end{turn} & \begin{turn}{80}{0}\end{turn} & \begin{turn}{80}{0}\end{turn} & \begin{turn}{80}{0}\end{turn} & \begin{turn}{80}{0}\end{turn} & \begin{turn}{80}{0}\end{turn} & \begin{turn}{80}{0}\end{turn} & \begin{turn}{80}{0}\end{turn} & \begin{turn}{80}{0}\end{turn} & \begin{turn}{80}{0}\end{turn} \\      \hline
 12 & \begin{turn}{80}{7}\end{turn} & \begin{turn}{80}{11}\end{turn} & \begin{turn}{80}{41}\end{turn} & \begin{turn}{80}{45}\end{turn} & \begin{turn}{80}{82}\end{turn} & \begin{turn}{80}{70}\end{turn} & \begin{turn}{80}{82}\end{turn} & \begin{turn}{80}{45}\end{turn} & \begin{turn}{80}{41}\end{turn} & \begin{turn}{80}{11}\end{turn} & \begin{turn}{80}{7}\end{turn} & \begin{turn}{80}{0}\end{turn} & \begin{turn}{80}{0}\end{turn} & \begin{turn}{80}{0}\end{turn} & \begin{turn}{80}{0}\end{turn} & \begin{turn}{80}{0}\end{turn} & \begin{turn}{80}{0}\end{turn} & \begin{turn}{80}{0}\end{turn} & \begin{turn}{80}{0}\end{turn} & \begin{turn}{80}{0}\end{turn} & \begin{turn}{80}{0}\end{turn} & \begin{turn}{80}{0}\end{turn} & \begin{turn}{80}{0}\end{turn} & \begin{turn}{80}{0}\end{turn} \\ 
                 \hline
 13 & \begin{turn}{80}{-9}\end{turn} & \begin{turn}{80}{-14}\end{turn} & \begin{turn}{80}{-50}\end{turn} & \begin{turn}{80}{-65}\end{turn} & \begin{turn}{80}{-110}\end{turn} & \begin{turn}{80}{-120}\end{turn} & \begin{turn}{80}{-120}\end{turn} & \begin{turn}{80}{-110}\end{turn} & \begin{turn}{80}{-65}\end{turn} & \begin{turn}{80}{-50}\end{turn} & \begin{turn}{80}{-14}\end{turn} & \begin{turn}{80}{-9}\end{turn} & \begin{turn}{80}{0}\end{turn} & \begin{turn}{80}{0}\end{turn} & \begin{turn}{80}{0}\end{turn} & \begin{turn}{80}{0}\end{turn} & \begin{turn}{80}{0}\end{turn} & \begin{turn}{80}{0}\end{turn} & \begin{turn}{80}{0}\end{turn} & \begin{turn}{80}{0}\end{turn} & \begin{turn}{80}{0}\end{turn} & \begin{turn}{80}{0}\end{turn} & \begin{turn}{80}{0}\end{turn} & \begin{turn}{80}{0}\end{turn} \\ 
            \hline
 14 & \begin{turn}{80}{11}\end{turn} & \begin{turn}{80}{16}\end{turn} & \begin{turn}{80}{65}\end{turn} & \begin{turn}{80}{82}\end{turn} & \begin{turn}{80}{167}\end{turn} & \begin{turn}{80}{160}\end{turn} & \begin{turn}{80}{226}\end{turn} & \begin{turn}{80}{160}\end{turn} & \begin{turn}{80}{167}\end{turn} & \begin{turn}{80}{82}\end{turn} & \begin{turn}{80}{65}\end{turn} & \begin{turn}{80}{16}\end{turn} & \begin{turn}{80}{11}\end{turn} & \begin{turn}{80}{0}\end{turn} & \begin{turn}{80}{0}\end{turn} & \begin{turn}{80}{0}\end{turn} & \begin{turn}{80}{0}\end{turn} & \begin{turn}{80}{0}\end{turn} & \begin{turn}{80}{0}\end{turn} & \begin{turn}{80}{0}\end{turn} & \begin{turn}{80}{0}\end{turn} & \begin{turn}{80}{0}\end{turn} & \begin{turn}{80}{0}\end{turn} & \begin{turn}{80}{0}\end{turn} \\ 
               \hline
 15 & \begin{turn}{80}{-12}\end{turn} & \begin{turn}{80}{-19}\end{turn} & \begin{turn}{80}{-77}\end{turn} & \begin{turn}{80}{-107}\end{turn} & \begin{turn}{80}{-212}\end{turn} & \begin{turn}{80}{-245}\end{turn} & \begin{turn}{80}{-306}\end{turn} & \begin{turn}{80}{-306}\end{turn} & \begin{turn}{80}{-245}\end{turn} & \begin{turn}{80}{-212}\end{turn} & \begin{turn}{80}{-107}\end{turn} & \begin{turn}{80}{-77}\end{turn} & \begin{turn}{80}{-19}\end{turn} & \begin{turn}{80}{-12}\end{turn} & \begin{turn}{80}{0}\end{turn} & \begin{turn}{80}{0}\end{turn} & \begin{turn}{80}{0}\end{turn} & \begin{turn}{80}{0}\end{turn} & \begin{turn}{80}{0}\end{turn} & \begin{turn}{80}{0}\end{turn} & \begin{turn}{80}{0}\end{turn} & \begin{turn}{80}{0}\end{turn} & \begin{turn}{80}{0}\end{turn} & \begin{turn}{80}{0}\end{turn} \\ 
              \hline
 16 & \begin{turn}{80}{13}\end{turn} & \begin{turn}{80}{21}\end{turn} & \begin{turn}{80}{96}\end{turn} & \begin{turn}{80}{126}\end{turn} & \begin{turn}{80}{295}\end{turn} & \begin{turn}{80}{315}\end{turn} & \begin{turn}{80}{498}\end{turn} & \begin{turn}{80}{420}\end{turn} & \begin{turn}{80}{498}\end{turn} & \begin{turn}{80}{315}\end{turn} & \begin{turn}{80}{295}\end{turn} & \begin{turn}{80}{126}\end{turn} & \begin{turn}{80}{96}\end{turn} & \begin{turn}{80}{21}\end{turn} & \begin{turn}{80}{13}\end{turn} & \begin{turn}{80}{0}\end{turn} & \begin{turn}{80}{0}\end{turn} & \begin{turn}{80}{0}\end{turn} & \begin{turn}{80}{0}\end{turn} & \begin{turn}{80}{0}\end{turn} & \begin{turn}{80}{0}\end{turn} & \begin{turn}{80}{0}\end{turn} & \begin{turn}{80}{0}\end{turn} & \begin{turn}{80}{0}\end{turn} \\ 
             \hline
 17 & \begin{turn}{80}{-14}\end{turn} & \begin{turn}{80}{-24}\end{turn} & \begin{turn}{80}{-112}\end{turn} & \begin{turn}{80}{-156}\end{turn} & \begin{turn}{80}{-364}\end{turn} & \begin{turn}{80}{-448}\end{turn} & \begin{turn}{80}{-648}\end{turn} & \begin{turn}{80}{-700}\end{turn} & \begin{turn}{80}{-700}\end{turn} & \begin{turn}{80}{-648}\end{turn} & \begin{turn}{80}{-448}\end{turn} & \begin{turn}{80}{-364}\end{turn} & \begin{turn}{80}{-156}\end{turn} & \begin{turn}{80}{-112}\end{turn} & \begin{turn}{80}{-24}\end{turn} & \begin{turn}{80}{-14}\end{turn} & \begin{turn}{80}{0}\end{turn} & \begin{turn}{80}{0}\end{turn} & \begin{turn}{80}{0}\end{turn} & \begin{turn}{80}{0}\end{turn} & \begin{turn}{80}{0}\end{turn} & \begin{turn}{80}{0}\end{turn} & \begin{turn}{80}{0}\end{turn} & \begin{turn}{80}{0}\end{turn} \\ 
               \hline
 18 & \begin{turn}{80}{16}\end{turn} & \begin{turn}{80}{26}\end{turn} & \begin{turn}{80}{136}\end{turn} & \begin{turn}{80}{184}\end{turn} & \begin{turn}{80}{486}\end{turn} & \begin{turn}{80}{560}\end{turn} & \begin{turn}{80}{984}\end{turn}  & \begin{turn}{80}{928}\end{turn} & \begin{turn}{80}{1244}\end{turn} & \begin{turn}{80}{928}\end{turn} & \begin{turn}{80}{984}\end{turn} & \begin{turn}{80}{560}\end{turn} & \begin{turn}{80}{486}\end{turn} & \begin{turn}{80}{184}\end{turn} & \begin{turn}{80}{136}\end{turn} & \begin{turn}{80}{26}\end{turn} & \begin{turn}{80}{16}\end{turn} & \begin{turn}{80}{0}\end{turn} & \begin{turn}{80}{0}\end{turn} & \begin{turn}{80}{0}\end{turn} & \begin{turn}{80}{0}\end{turn} & \begin{turn}{80}{0}\end{turn} & \begin{turn}{80}{0}\end{turn} & \begin{turn}{80}{0}\end{turn} \\ 
            \hline
 19 & \begin{turn}{80}{-18}\end{turn} & \begin{turn}{80}{-30}\end{turn} & \begin{turn}{80}{-156}\end{turn} & \begin{turn}{80}{-228}\end{turn} & \begin{turn}{80}{-588}\end{turn} & \begin{turn}{80}{-756}\end{turn} & \begin{turn}{80}{-1260}\end{turn} & \begin{turn}{80}{-1428}\end{turn} & \begin{turn}{80}{-1680}\end{turn}& \begin{turn}{80}{-1680}\end{turn} & \begin{turn}{80}{-1428}\end{turn} & \begin{turn}{80}{-1260}\end{turn} & \begin{turn}{80}{-756}\end{turn} & \begin{turn}{80}{-588}\end{turn} & \begin{turn}{80}{-228}\end{turn} & \begin{turn}{80}{-156}\end{turn} & \begin{turn}{80}{-30}\end{turn} & \begin{turn}{80}{-18}\end{turn} & \begin{turn}{80}{0}\end{turn} & \begin{turn}{80}{0}\end{turn} & \begin{turn}{80}{0}\end{turn} & \begin{turn}{80}{0}\end{turn} & \begin{turn}{80}{0}\end{turn} & \begin{turn}{80}{0}\end{turn} \\ 
          \hline
 20 & \begin{turn}{80}{20}\end{turn} & \begin{turn}{80}{33}\end{turn} & \begin{turn}{80}{185}\end{turn} & \begin{turn}{80}{267}\end{turn} & \begin{turn}{80}{758}\end{turn} &  \begin{turn}{80}{924}\end{turn} &  \begin{turn}{80}{1805}\end{turn}&  \begin{turn}{80}{1848}\end{turn}&  \begin{turn}{80}{2718}\end{turn}&  \begin{turn}{80}{2320}\end{turn}& \begin{turn}{80}{2718}\end{turn} & \begin{turn}{80}{1848}\end{turn} & \begin{turn}{80}{1805}\end{turn}& \begin{turn}{80}{924}\end{turn} & \begin{turn}{80}{758}\end{turn} & \begin{turn}{80}{267}\end{turn} & \begin{turn}{80}{185}\end{turn} & \begin{turn}{80}{33}\end{turn} & \begin{turn}{80}{20}\end{turn} & \begin{turn}{80}{0}\end{turn} & \begin{turn}{80}{0}\end{turn} & \begin{turn}{80}{0}\end{turn} & \begin{turn}{80}{0}\end{turn} & \begin{turn}{80}{0}\end{turn} \\ 
          \hline
 21 & \begin{turn}{80}{-22}\end{turn} & \begin{turn}{80}{-37}\end{turn} & \begin{turn}{80}{-210}\end{turn} & \begin{turn}{80}{-378}\end{turn} &  \begin{turn}{80}{-903}\end{turn} & \begin{turn}{80}{-1200}\end{turn} &  \begin{turn}{80}{-2247}\end{turn}&  \begin{turn}{80}{-2667}\end{turn}& \begin{turn}{80}{-3570}\end{turn} &  \begin{turn}{80}{-3790}\end{turn} &  \begin{turn}{80}{-3790}\end{turn}& \begin{turn}{80}{-3570}\end{turn}&  \begin{turn}{80}{-2667}\end{turn}& \begin{turn}{80}{-2247}\end{turn}& \begin{turn}{80}{-1200}\end{turn} & \begin{turn}{80}{-903}\end{turn} & \begin{turn}{80}{-318}\end{turn} & \begin{turn}{80}{-210}\end{turn} & \begin{turn}{80}{-37}\end{turn} & \begin{turn}{80}{-22}\end{turn} & \begin{turn}{80}{0}\end{turn} & \begin{turn}{80}{0}\end{turn} & \begin{turn}{80}{0}\end{turn} & \begin{turn}{80}{0}\end{turn} \\ 
                \hline
 22 & \begin{turn}{80}{24}\end{turn} &\begin{turn}{80}{40}\end{turn} &  \begin{turn}{80}{245}\end{turn} & \begin{turn}{80}{360}\end{turn} &  \begin{turn}{80}{1130}\end{turn} & \begin{turn}{80}{1440}\end{turn} & \begin{turn}{80}{3060}\end{turn} & \begin{turn}{80}{3360}\end{turn}& \begin{turn}{80}{5410}\end{turn} & \begin{turn}{80}{5040}\end{turn} &  \begin{turn}{80}{6510}\end{turn}& \begin{turn}{80}{5040}\end{turn} & \begin{turn}{80}{5410}\end{turn}& \begin{turn}{80}{3360}\end{turn}&  \begin{turn}{80}{3060}\end{turn}&  \begin{turn}{80}{1440}\end{turn}& \begin{turn}{80}{1130}\end{turn} & \begin{turn}{80}{360}\end{turn} & \begin{turn}{80}{245}\end{turn} & \begin{turn}{80}{40}\end{turn} & \begin{turn}{80}{24}\end{turn} & \begin{turn}{80}{0}\end{turn} & \begin{turn}{80}{0}\end{turn} & \begin{turn}{80}{0}\end{turn} \\ 
                \hline
 23 & \begin{turn}{80}{-25}\end{turn} & \begin{turn}{80}{-44}\end{turn} &  \begin{turn}{80}{-275}\end{turn}&  \begin{turn}{80}{-418}\end{turn}& \begin{turn}{80}{-1320}\end{turn}& \begin{turn}{80}{-1815}\end{turn} &  \begin{turn}{80}{-3729}\end{turn} & \begin{turn}{80}{-4620}\end{turn}& \begin{turn}{80}{-6930}\end{turn}& \begin{turn}{80}{-7682}\end{turn} & \begin{turn}{80}{-8778}\end{turn}&  \begin{turn}{80}{-8778}\end{turn}&  \begin{turn}{80}{-7682}\end{turn}& \begin{turn}{80}{-6930}\end{turn}& \begin{turn}{80}{-4620}\end{turn}& \begin{turn}{80}{-3729}\end{turn} & \begin{turn}{80}{-1815}\end{turn}& \begin{turn}{80}{-1320}\end{turn} & \begin{turn}{80}{-418}\end{turn} & \begin{turn}{80}{-275}\end{turn} & \begin{turn}{80}{-44}\end{turn} & \begin{turn}{80}{-25}\end{turn} & \begin{turn}{80}{0}\end{turn} & \begin{turn}{80}{0}\end{turn} \\ \hline
\end{tabular}
\end{center}}
\caption{\small Table of Euler characteristics $\chi_{s_1,s_2,t}^{\pi g}$ by genus $g=3$, complexity $t$ and Hodge degree $s_2$ of $\pi_*\overline{\mbox{Emb}}_c(\coprod_{i=1}^2 \rbb^{m_i}, \rdbb) \otimes \qbb$ for $m_1$, $m_2$ and $d$ odd $(s_1=t-s_2-2)$.}
\end{table}
\normalsize

\textsf{Department of Mathematics and Statistics, University of Regina}\\
 3737 Wascana Pkwy, Regina, SK S4S 0A2, Canada\\
\textit{E-mail address: pso748@uregina.ca}

\textsf{Department of Mathematics, Kansas State University\\
        138 Cardwell Hall, Manhattan, KS 66506, USA \\}
\textit{E-mail address: turchin@ksu.edu}

\end{document}